\newtheorem{thm}{Theorem}[section]
\newtheorem{prop}[thm]{Proposition}
\newtheorem{cor}[thm]{Corollary}
\newtheorem{lemma}[thm]{Lemma}
\newtheorem{preremark}[thm]{Remark}
\newenvironment{remark}{\begin{preremark}\rm}{\medskip \end{preremark}}
\numberwithin{equation}{section}
\newcommand{\norm}[1]{\left\Vert#1\right\Vert}
\newcommand{\set}[1]{\left\{#1\right\}}
\newcommand{\R}{\mathbb R}
\newcommand{\eps}{\varepsilon}
\newcommand{\grad} {\nabla}
\newcommand{\dd} {\; \mathrm{d}}
\newcommand{\one} {\chi}
\begin{document}

\title{A new regularization mechanism for the Boltzmann equation without cut-off}

\author{Luis Silvestre}
\address{Department of Mathematics, The University of Chicago}
\email{luis@math.uchicago.edu}

\thanks{Luis Silvestre was partially supported by NSF grants DMS-1254332 and DMS-1065979.}


\begin{abstract}
We apply recent results on regularity for general integro-differential equations to derive a priori estimates in H\"older spaces for the space homogeneous Boltzmann equation in the non cut-off case. We also show an a priori estimate in $L^\infty$ which applies in the space inhomogeneous case as well, provided that the macroscopic quantities remain bounded.
\end{abstract}

\maketitle

\section{Introduction}
The purpose of this article is to analyse the implications of recent regularity results about general integro-differential equations \cite{schwab} for the non cut-off homogeneous Boltzmann equation. There is a vast literature about the regularity of the solutions of the homogeneous non cut-off Boltzmann equation. Essentially all these results are based on coercivity bounds of the general form
\begin{equation} \label{e:subellipticity}  \int_{\R^N} Q(g,f) f \dd v \geq c \|f\|_{H^{\nu/2}_\ast}^2 - \text{(lower order correction)}.
\end{equation}
Here $Q$ is the Boltzmann collision operator. The constant $c$ should only depend on the mass, energy and entropy of $g$. The main point of this paper is that it provides a priori regularity estimates for the non cut-off homogeneous Boltzmann equation without using any coercivity estimate like \eqref{e:subellipticity}, even indirectly.

Until the late 90's, most of the work on the Boltzmann equation was concentrated on the simpler cut-off case. After the entropy dissipation estimates for the non cut-off case were obtained in \cite{alexandre2000entropy}, the study of the non cut-off case exploded and many results were produced. The main estimate in \cite{alexandre2000entropy} was essentially an estimate of the form \eqref{e:subellipticity}. It was later used in \cite{desvillettes2005smoothness} to prove that, under some conditions, the solutions of the homogeneous non cut-off Boltmann equation belong to the Schwartz space. Many improvements of these results were obtained during the following 10 years, which we review below. The source of regularization in all these works is an estimate like \eqref{e:subellipticity}, which is proved using Fourier analysis. The great novelty of this article is that the regularization effect comes from a completely different method. The regularity theory for general integro-differential equations has developed in parallel in the last fifteen years motivated mainly by applications to Levy processes. This is the first paper where those results are applied to the Boltzmann equation. 

The Boltzmann equation consists in finding a non negative function $f(t,x,v)$ solving
\begin{equation} \label{e:Boltzmann-inhomogeneous}  
\begin{aligned}
f_t + v \cdot \grad_x f &= Q(f,f),\\
f(t,x,v) &= f_0(x,v).
\end{aligned}
\end{equation}

The right hand side is written in terms of the bilinear form $Q$, which has the following form
\begin{equation} \label{e:Boltzmann-Q}
Q(f,g) = \int_{\R^N} \int_{\partial B_1} \left( f(v'_\ast) g(v') - f(v_\ast) g(v) \right) B\left( r,\theta \right) \dd \sigma \dd v_\ast.
\end{equation}
Here
\begin{align}
r &= |v_\ast-v| = |v_\ast'-v'|, \label{e:var-r}\\
\cos \theta &= \sigma \cdot \frac{v_\ast-v}{|v_\ast-v|}, \label{e:var-theta}\\
v' &= \frac{v+v_\ast} 2 + \frac{r} 2 \sigma, \label{e:var-vprime}\\
v_\ast' &= \frac{v+v_\ast} 2 - \frac{r} 2 \sigma.\label{e:var-vstarprime}
\end{align}


There are different modelling choices for $B$. In this paper we will consider the usual cross sections $B$ of the form
\begin{equation} \label{e:B-assumption} B(r,\theta) = r^\gamma b(\cos \theta), \text{ where } \gamma > -N \text{ and } b(\cos \theta) \approx |\theta|^{-(N-1) - \nu} \text{ for } \theta \in (-\pi,\pi).
\end{equation}
We write $b(\cos \theta)$ instead of $b(\theta)$ to stress that it is an even function in $\theta$ which is $2\pi$-periodic. When we write $b(\cos \theta) \approx |\theta|^{-(N-1) - \nu}$, we mean that there is some constant $C$ such that $C^{-1} |\theta|^{-(N-1) - \nu} \leq b(\theta) \leq C|\theta|^{-(N-1) - \nu}$. The lower bound should be only necessary in a neighbourhood of $\theta=0$. For simplicity, we assume that the lower bound holds for all values of $\theta \in (-\pi,\pi)$. Whenever we say in this article that an estimate depends on the cross section $B$, we mean that it depends only on $\gamma$, $\nu$ and the constant $C$ above. No other finer condition on $B$ affects the estimates.

The physically relevant model derived from inverse-power potentials gives
\[ \gamma = \frac{s-(2N-1)}{s-1}, \qquad \nu = \frac 2 {s-1}. \]
The parameter $s$ is supposed to be larger than $2$. Note that we have $\nu \in (0,2)$ and $\gamma = 1 - (N-1) \nu$. In this article we will consider arbitrary pairs $\gamma > -N$ and $\nu \in (0,2)$.


In these models $B(r,\theta)$ is never integrable for $\sigma \in \partial B_1$. Using a modified cross section $B$ which is integrable is known as Grad's cut-off assumption. It is essential for the methods in this article that we do \textbf{not} make the cut-off assumption.

The space homogeneous problem is the case where $f$ is independent of $x$ and the equation reduces to
\begin{equation} \label{e:space-homogeneous-boltzmann}  f_t = Q(f,f). 
\end{equation}

Throughout this paper, we assume that our solutions are classical and derive a priori estimates. Using the vanishing viscosity method, one should be able to prove that there is a weak solution which satisfies these estimates. This vanishing viscosity procedure is compatible with the estimates in \cite{schwab}, although this is not explicitly worked out in that paper.

The main application of the results from \cite{schwab} gives us a H\"older estimate for the linear Boltzmann equation. We state the result here, but a more precise description is given as Theorem \ref{t:Calpha-linearized}.

\begin{thm} \label{t:calpha-linearized-intro}
Assume $B$ has the form \eqref{e:B-assumption}. Let $f:[0,T] \times \R^N$ be a non negative function whose mass, energy and entropy stay bounded for all $t \in [0,T]$. In the case of the mass, we also require it to stay strictly positive.

Let $g$ be a solution of the linear problem
\[ g_t = Q(f,g) + h,\]
then, for any $R>0$, there exists an $\alpha>0$ and $C>0$ so that $g$ satisfies the estimate
\[ \|g\|_{C^\alpha(B_R \times [T/2,T])} \leq C \left( \|g\|_{L^\infty(\R^N \times [0,T])} + \|h\|_{L^\infty(B_{2R} \times [0,T])} \right).\]
The values of $\alpha$ and $C$ depend on $R$, $T$, the mass, energy and entropy of $f$, and the two constants $K_0$ and $\tilde K_0$ below
\begin{align*}
K_0 &:= \sup_{(t,v) \in [0,T]\times B_R} \int_{\R^N} |w|^\gamma f(t,v+w) \dd v && \text{\bf (only necessary if $\gamma < 0$)}, \\
\tilde K_0 &:= \sup_{(t,v) \in [0,T]\times B_R} \int_{\R^N} |w|^{\gamma+\nu} f(t,v+w) \dd v  && \text{\bf (only necessary if $\gamma+\nu > 2$)}
\end{align*}
\end{thm}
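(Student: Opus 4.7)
My plan is to recast the linearized Boltzmann operator $g \mapsto Q(f,g)$ as a linear integro-differential operator acting on $g$, with a kernel $K_f(v,v')$ depending on the fixed coefficient $f$, and then invoke the Hölder regularity theory from \cite{schwab}. The first step is therefore to use the Carleman-type change of variables to write
\[ Q(f,g)(v) = \int_{\R^N} \bigl(g(v') - g(v)\bigr) K_f(v,v') \dd v' \;-\; g(v) \, c_f(v), \]
up to a principal value interpretation of the singular integral, where $K_f(v,v')$ is (morally) an integral of $f$ over the hyperplane through $v_\ast = v+v' - \ldots$ perpendicular to $v'-v$, weighted by a factor of order $|v'-v|^{-N-\nu}$ times powers of $|v-v_\ast|^\gamma$, and $c_f(v)$ is a convolution of $f$ with $|w|^\gamma$. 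The zeroth order term $c_f(v) g(v)$ and the transport term $v \cdot \grad_x g$ (not present in the space-homogeneous setting) do not affect the Hölder theory and can be absorbed into the source $h$ or handled perturbatively.

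The next step is to verify the structural hypotheses that the theory in \cite{schwab} requires on $K_f$: a pointwise upper bound $K_f(v,v') \lesssim |v'-v|^{-N-\nu}$ at short range together with tail integrability, and a nondegeneracy bound from below in a family of directions of positive measure. The upper bounds are tied exactly to the quantities in the statement: the factor $|v-v_\ast|^\gamma$ inside the kernel forces one to control $\int |w|^\gamma f \dd w$ when $\gamma<0$ (giving the hypothesis $K_0$) and the higher moment $\int |w|^{\gamma+\nu} f \dd w$ when one needs to prove that the tail of the kernel has bounded $\nu$-th moment (giving $\tilde K_0$); in the intermediate range $\gamma \geq 0$, $\gamma+\nu \leq 2$, only finite mass and energy are needed, which is why those two constants appear as conditional requirements. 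The lower bound has the flavor of coercivity but is used \emph{pointwise} on $K_f$ rather than in an $H^{\nu/2}$ norm.

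The genuinely delicate step, and the one I would expect to be the main obstacle, is proving that nondegeneracy bound from below in a nontrivial cone of directions \emph{using only the mass, energy, and entropy of $f$}. The idea is that the kernel $K_f(v,\cdot)$ at a point $v'$ near $v$ involves an integral of $f$ over an $(N-1)$-dimensional affine subspace, and to control it from below one must rule out that $f$ can arrange itself to make these slice integrals degenerate in too large a set of directions. A lower bound on mass prevents $f$ from vanishing, bounded energy prevents it from escaping to infinity, and bounded entropy prevents it from concentrating on low-dimensional sets; these three together force $\int K_f(v,v+r\sigma) r^{N-1} \dd r$ to be bounded below for $\sigma$ in a set of positive measure on $S^{N-1}$. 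This is the pointwise non-concentration argument that replaces the Fourier-analytic coercivity estimate \eqref{e:subellipticity}.

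Once these structural bounds on $K_f$ are in hand, the equation satisfied by $g$ falls within the class treated in \cite{schwab}, and the Hölder estimate follows directly from the main regularity theorem there, with the constant $\alpha$ and the prefactor $C$ depending only on $R$, $T$, the bounds on mass/energy/entropy of $f$, and on $K_0$, $\tilde K_0$. The precise rewriting of $Q(f,g)$ as an integro-differential operator, the statement of what ``ellipticity'' amounts to in this context, and the quantitative form of the estimate will be made rigorous in Theorem \ref{t:Calpha-linearized}.
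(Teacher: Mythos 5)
Your overall strategy is the same as the paper's: split $Q(f,g) = Q_1(f,g) + Q_2(f,g)$ with $Q_1(f,g) = \int (g(v')-g(v))\, K_f(v,v')\dd v'$ coming from a Carleman-type change of variables (Lemma~\ref{l:expression-for-Q1}), absorb the zeroth-order piece $Q_2(f,g) = (\tilde B \ast f)\,g$ into the source $h$ using the $\gamma$-moment bound $K_0$, verify the hypotheses of Theorem~\ref{t:Calpha-general} (from \cite{schwab}) for $K_f$, and conclude. The nondegeneracy-in-a-cone argument from mass, energy, and entropy (Lemmas~\ref{l:lifted-set} and~\ref{l:K-below}) is also as you sketch.

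There is, however, one genuine misstep. You claim that the upper-bound hypothesis required by \cite{schwab} is a \emph{pointwise} bound $K_f(v,v') \lesssim |v'-v|^{-N-\nu}$ near the diagonal. If taken literally this leaves a real gap: by Corollary~\ref{c:Kf-approx}, $K_f(v,v')$ is a weighted integral of $f$ over the hyperplane through $v$ perpendicular to $v'-v$, and for a function $f$ controlled only through mass, energy, entropy, and the moments $K_0$, $\tilde K_0$, such a slice integral can be arbitrarily large for particular directions, so no pointwise upper bound is provable under the stated hypotheses. The theory of \cite{schwab} is precisely designed to dispense with this: the relevant assumption \eqref{e:K-bound-above} is the \emph{averaged} bound $\int_{B_{2r}(v)\setminus B_r(v)} K_f(v,v')\dd v' \leq \Lambda r^{-\nu}$, which Lemma~\ref{l:K-above} establishes from $\tilde K_0$ via Fubini and the dual polar coordinates of Proposition~\ref{p:dual-polar}. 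The paper emphasizes this distinction (end of Section~4): a pointwise kernel bound would force one to assume $f$ is pinched between two Maxwellians and would only allow application of older Hölder theories such as \cite{luislecturenotes}, whereas the averaged bound is exactly what makes the macroscopic-quantities-only hypotheses feasible. Your argument goes through once the pointwise bound is replaced by \eqref{e:K-bound-above}. (Two smaller slips: $K_0$ controls the zeroth-order coefficient $\tilde B \ast f$, not the kernel, while $\tilde K_0$ controls the averaged kernel; and the nondegeneracy condition \eqref{e:K-bound-below} is a pointwise-in-$r$ bound $K_f(v,v+r\sigma)\geq \lambda r^{-N-\nu}$ for $\sigma$ in a cone, not a lower bound on $\int K_f\, r^{N-1}\dd r$, which would diverge.)
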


The constants $C$ and $\alpha$ in Theorem \ref{t:calpha-linearized-intro} depend only on upper bounds for the energy, entropy, $K_0$ and $\tilde K_0$. With respect to the mass, they depend on both upper and lower bounds.

Note that the value of $K_0$ is controlled by the mass of $f$ and $\tilde K_0$ if $\gamma \geq 0$, which makes it irrelevant in that case. Moreover, if $\gamma+\nu \leq 2$, then the value of $\tilde K_0$ is controlled by $K_0$ and the energy of $f$.

The previous theorem can be applied to $g=f$ with $h=0$. If we make $g$ equal to a derivative of $f$, then we would get lower order terms into the right hand side $h$. This might be used as a possible bootstrapping idea but will not be pursued in this article. 

An obvious application of Theorem \ref{t:calpha-linearized-intro} is that whenever the solution $f$ of \eqref{e:space-homogeneous-boltzmann} is bounded, it is also locally $C^\alpha$. Note that if $f \in L^\infty([0,T],L^\infty(\R^N) \cap L^1(\R^N))$, then the existence of the constant $K_0$ in the assumptions of Theorem \ref{t:calpha-linearized-intro} is guaranteed even for $\gamma<0$.

In order to prove Theorem \ref{t:calpha-linearized-intro}, we rewrite the Boltzmann equation as $g_t = Q_1(f,g) + Q_2(f,g)$ where $Q_1(f,g)$ is an integro-differential operator of the form studied in \cite{schwab} and $Q_2(f,g)$ is a lower order term. The application of the main result in \cite{schwab} gives us the H\"older estimate. It is interesting to point out that previous H\"older estimates for parabolic integro-differential equations (like \cite{silvestreHJ}, \cite{silvestreASNSP}, \cite{lara2014regularity}, \cite{luislecturenotes} section 3.2 or \cite{silvestreICM}) do not suffice to prove Theorem \ref{t:calpha-linearized-intro} since they would depend on higher regularity conditions for the function $f$.

We also obtain $L^\infty$ estimates to complement Theorem \ref{t:calpha-linearized-intro}. We state this theorem in terms of the space in-homogeneous equation. A more precise version is given as Theorem \ref{t:Linfty}.

\begin{thm} \label{t:Linfty-intro}
Let $f$ be a solution of the Boltzmann equation \eqref{e:Boltzmann-inhomogeneous} which is periodic in $x$. Assume that for all $x \in \R^N$ and $t \geq 0$, we have
\begin{align*}
0< M_1 \leq \int_{\R^N} f(t,x,v) \dd v &\leq M_0, \\
\int_{\R^N} |v|^2 f(t,x,v) \dd v &\leq E_0, \\
\int_{\R^N} f(t,x,v) \log f(t,x,v) \dd v &\leq H_0.
\end{align*}
Moreover, if $\gamma > 2$ or $\gamma+\nu \leq 0$, we need to make the following extra assumptions. There exists some $p > N / (N+\nu+\gamma)$, so that for $q = \max(0,1-\frac N \nu (\nu+\gamma))$ and for all $x \in \R^N$ and $t \geq 0$,
\begin{align*} 
\int_{\R^N} f(t,x,v) |v|^\gamma \dd v &\leq \tilde K_0. \qquad \text{\bf (only necessary if $\gamma>2$)}\\
\sup_{t \in [0,T]} \int_{\R^N} (1+|v|)^q f(t,x,v)^p \dd v &\leq K_0. \qquad \text{\bf (only necessary if $\gamma+\nu \leq 0$)}
\end{align*}

Then, an $L^\infty$ bound holds
\[ \|f(t,\cdot)\|_{L^\infty} \leq C(t),\]
Here $C(t) < +\infty$ for all $t>0$. It is a function which depends only on $E_0$, $M_0$, $M_1$ and $H_0$, $\tilde K_0$ (if $\gamma > 2$) and $K_0$ (if $\gamma+\nu \leq 0$).
\end{thm}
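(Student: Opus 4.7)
\medskip
The plan is to combine the Carleman-style decomposition
\[
Q(f,f)(v) = L_f f(v) + c_b \,(f \ast |\cdot|^\gamma)(v)\, f(v)
\]
with a nonlinear maximum principle. Here $L_f g(v) = \mathrm{PV}\int_{\R^N} K_f(v,v')[g(v')-g(v)] \dd v'$ is the integro-differential operator whose nonnegative kernel $K_f$ is produced by the usual cancellation/Carleman change of variables (essentially the operator $Q_1$ mentioned right after Theorem \ref{t:calpha-linearized-intro}). Under the assumptions of the theorem the multiplicative factor $c_b(f \ast |\cdot|^\gamma)(v)$ is bounded by a constant $C_0$ depending only on $M_0, E_0, \tilde K_0, K_0$; the case split in the statement records exactly which quantity controls this convolution. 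Mass and energy suffice when $-\nu < \gamma \leq 2$, the moment $\tilde K_0$ is needed when $\gamma > 2$, and the weighted $L^p$ bound is needed when $\gamma+\nu \leq 0$, where $|w|^\gamma$ becomes locally non-integrable against $f$.

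\medskip
Next I would apply a nonlinear maximum principle. Because $f$ is periodic in $x$, the supremum $M(t) := \|f(t,\cdot,\cdot)\|_{L^\infty}$ is attained at some $(x_0, v_0)$, where the transport term $v \cdot \grad_x f$ vanishes, so
\[
\partial_t f(t,x_0,v_0) \leq L_f f(v_0) + C_0 M(t).
\]
To control $L_f f(v_0)$ from above, the mass bound forces $|\{v' : f(t,x_0,v') \geq M/2\}| \leq 2 M_0 / M$, so on a ball $B_r(v_0)$ with $r \sim (M_0/M)^{1/N}$ at least half of the directions satisfy $f(v') - f(v_0) \leq -M/2$. Combining this with a nondegenerate lower bound $K_f(v_0,v') \gtrsim |v'-v_0|^{-N-\nu}$ on a set of directions of positive measure (with constants depending only on $M_0, M_1, E_0, H_0$) yields $L_f f(v_0) \leq -c M^{1+\nu/N}$ for $M$ large. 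This produces the differential inequality
\[
\frac{d}{dt} M(t) \leq -c\, M(t)^{1+\nu/N} + C_0\, M(t),
\]
understood in the viscosity sense. Integrating gives $M(t) \leq C t^{-N/\nu}$ for small $t$ and $M(t) \leq C$ for $t$ bounded away from zero, with $C$ depending only on the prescribed quantities; this is the claimed function $C(t)$.

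\medskip
The main obstacle is the quantitative nondegenerate lower bound on $K_f$ in terms only of the mass, energy, and entropy. The entropy bound is the crucial ingredient here: without it, $f$ could concentrate along a hyperplane and make the line integral defining $K_f(v_0,v')$ arbitrarily small in certain directions. This step substitutes for, but differs from, the classical coercivity estimate \eqref{e:subellipticity}: only a pointwise lower bound on the diffusion kernel is required, not a global Sobolev coercivity, which matches the philosophy announced in the introduction. Once the kernel estimate is in place, the rest of the argument is a standard nonlinear maximum-principle/ODE computation, and the space-inhomogeneous case presents no additional difficulty because the transport operator vanishes at the running maximum.
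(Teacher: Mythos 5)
Your strategy matches the paper's at a high level --- decompose $Q = Q_1 + Q_2$ (Carleman plus convolution), evaluate the equation at the running maximum, and close a nonlinear ODE --- but the proposal contains two genuine gaps precisely at the places where the hard work lies.

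First, the claimed lower bound $K_f(v_0,v') \gtrsim |v'-v_0|^{-N-\nu}$ on a cone of directions of measure bounded below \emph{uniformly in $v_0$}, with constants depending only on $M_0,M_1,E_0,H_0$, is false for large $|v_0|$. The Carleman kernel $K_f(v_0,v')$ sees only the values of $f$ on the hyperplane through $v_0$ perpendicular to $v'-v_0$; the lifted set produced by the entropy lives in a fixed ball $B_r$, so that hyperplane meets $B_r$ only when the direction $\sigma=(v'-v_0)/|v'-v_0|$ satisfies $|\sigma\cdot v_0|\lesssim r$, i.e.\ $\sigma$ lies in a band of width $\sim r/|v_0|$. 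Thus the cone measure necessarily degenerates like $1/|v_0|$ (this is exactly the content of Lemma \ref{l:K-below-v-large}), and the paper compensates by combining (i) the growth $(1+|v_0|)^{1+\gamma+\nu}$ of the kernel lower bound, and (ii) the decay of the mass of $f$ inside the degenerate cone $\Xi$, controlled by the energy ($\int_\Xi f \leq 4E_0/|v_0|^2$) or by the weighted $L^p$ bound when $\gamma+\nu\leq 0$. Without this balance the estimate $Q_1(f,f)(v_0) \leq -cM^{1+\nu/N}$ does not hold for all $v_0$, and the ODE does not close.

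Second, when $\gamma<0$ the factor $(f*|\cdot|^\gamma)(v)$ is \emph{not} bounded by a constant $C_0$: splitting the convolution near and far from $v$ gives a bound of the form $C M_0^{1+\gamma/N} m^{-\gamma/N}$, which grows with the running maximum $m$. The correct differential inequality is therefore $M' \leq -cM^{1+\nu/N} + C M^{1-\gamma/N}$ (or $+CM^{2-p(N+\gamma)/N}$ when one uses the $L^p$ bound), and the dissipative term dominates \emph{only} when $\gamma+\nu>0$ (resp.\ $p>N/(N+\nu+\gamma)$). Writing $+C_0 M$ hides precisely the dichotomy that produces the hypotheses in the statement. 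You should replace the claimed uniform cone bound by the quantitative large-$|v|$ version and redo the ODE with the $m$-dependent bound on $Q_2$; the rest of the argument is sound and matches the paper's.
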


Note that if $\gamma \in [0,2]$, the value of $K_0$ is controlled by $M_0$ and $E_0$. If $\gamma<0$, the value of $K_0$ is not used in the estimate. If $\gamma+\nu > 0$, then we can take $p=1$, $q<1$ and $\tilde K_0 = C(M_0,E_0)$.

As a final result for the Boltzmann equation, Theorem \ref{t:Linfty-intro} is probably the most novel result of this paper compared with the available literature. Note that when $\gamma+\nu > 0$ and $\gamma \leq 2$, Theorem \ref{t:Linfty-intro} gives us a regularization result in $L^\infty$ for the inhomogeneous Boltzmann equation which depends only on physically relevant quantities. The initial data $f_0$ need not be bounded.

Combining Theorems \ref{t:calpha-linearized-intro} and \ref{t:Linfty-intro}, we obtain a priori estimates for solutions of the space homogeneous Boltzmann equation without cut-off. The first result concerns the H\"older continuity of the solutions of \eqref{e:space-homogeneous-boltzmann}. A more precise version is given as Corollary \ref{c:Calpha-Boltzmann}

\begin{cor} \label{c:Calpha-Boltzmann-intro}
Assume $B$ has the form \eqref{e:B-assumption}. Let $R>0$ and $f:[0,T] \times \R^N \to \R$ be a non negative solution of \eqref{e:space-homogeneous-boltzmann} with finite mass, energy and entropy.

In the case $\gamma+\nu \leq 0$, we also assume,
\[ 
\begin{aligned}
\sup_{t \in [0,T]} \int_{\R^N} (1+|v|)^q f(t,v)^p \dd v &\leq K_0, \\
\text{ for some  } p &> N / (N+\nu+\gamma) \text{ and } q = \max(0,1-\frac N \nu (\nu+\gamma)). \ \ \text{\bf (only necessary if $\gamma+\nu \leq 0$)}
\end{aligned}
\]
Then, there is an a priori estimate of the form
\[ \|f\|_{L^\infty([T/2,T]\times \R^N)} + \|f\|_{C^\alpha([T/2,T] \times B_{R/2})} \leq C.\]
where $\alpha>0$ and $C$ depend only on the mass, energy and entropy of $f_0$, $K_0$, $R$, $T$, the dimension $N$ and the cross section $B$.
\end{cor}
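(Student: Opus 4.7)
The plan is to invoke the two preceding theorems in sequence: first Theorem \ref{t:Linfty-intro} to upgrade the assumed macroscopic bounds on $f$ to a pointwise bound, and then Theorem \ref{t:calpha-linearized-intro} with the choice $g = f$ and $h = 0$ to upgrade the pointwise bound to H\"older regularity. Both theorems apply to \eqref{e:space-homogeneous-boltzmann} after observing that this equation is a special case of \eqref{e:Boltzmann-inhomogeneous} (trivial in $x$) and that $f_t = Q(f,f)$ is exactly the linearized equation with $g=f$ and $h\equiv 0$.

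For the first step, the macroscopic hypotheses of Theorem \ref{t:Linfty-intro} reduce to classical facts of the space-homogeneous theory: mass and energy are conserved along \eqref{e:space-homogeneous-boltzmann} and the entropy is non-increasing by the $H$-theorem, so the initial data hypotheses of the corollary give uniform-in-time bounds for $M_0$, $M_1$, $E_0$ and $H_0$. When $\gamma + \nu \leq 0$, the required $L^p$ bound is precisely the one assumed in the statement. When $\gamma > 2$, the required moment $\int f(t,v)\,|v|^\gamma \dd v$ is furnished by the classical Povzner-type moment creation estimates for hard potentials. Theorem \ref{t:Linfty-intro} then yields an $L^\infty$ bound for $f$ on $[\tau, T] \times \R^N$ for every $\tau > 0$.

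For the second step, given the $L^\infty$ bound from the first step (which is what allows the integrals below to converge near the singular range of $|w|^\gamma$), I would apply Theorem \ref{t:calpha-linearized-intro} to $f$ on a suitably translated time interval. To verify the quantities $K_0$ and $\tilde K_0$ of that theorem, when $\gamma < 0$ the integral $\int_{\R^N} |w|^\gamma f(t,v+w) \dd w$ is split at $|w|=1$: the inner piece is bounded by $\|f\|_{L^\infty}$ times $\int_{B_1} |w|^\gamma \dd w$, which is finite since $\gamma > -N$, and the outer piece is bounded by the mass. When $\gamma + \nu > 2$, the higher moment $\int |w|^{\gamma+\nu} f \dd w$ is again supplied by hard-potential moment creation; indeed $\nu < 2$ together with $\gamma + \nu > 2$ force $\gamma > 0$. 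Theorem \ref{t:calpha-linearized-intro} then delivers the desired H\"older estimate on $[T/2, T] \times B_{R/2}$, which combined with the $L^\infty$ bound of the first step gives the claimed estimate.

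The only piece of the argument that is not directly supplied by the two theorems being combined is the use of conservation laws and moment creation in the space-homogeneous setting, all of which is classical. I expect the main obstacle to be a careful case analysis across the various regimes of $\gamma$ and $\nu$ to verify the extra integrability and moment hypotheses of both theorems; the actual regularization mechanism is entirely provided by the two previous results applied in sequence.
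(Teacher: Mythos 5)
Your proposal is correct and follows essentially the same two-step strategy as the paper's proof of Corollary \ref{c:Calpha-Boltzmann}: first use the conservation laws and the $H$-theorem (plus moment creation for hard potentials and the assumed $L^p$ bound when $\gamma+\nu\le 0$) to verify the hypotheses of Theorem \ref{t:Linfty}, then feed the resulting $L^\infty$ bound (valid only for $t>0$, hence the restart at a slightly positive time) into Theorem \ref{t:Calpha-linearized} with $g=f$, $h=0$, after checking $K_0$ via the interpolation $L^1\cap L^\infty\Rightarrow\int|w|^\gamma f<\infty$ and $\tilde K_0$ via moment creation. Your verification is if anything slightly more explicit than the paper's (e.g.\ you note that $\gamma+\nu>2$ together with $\nu<2$ forces $\gamma>0$, which is the precise condition for moment creation), but it is the same argument.
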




Another application of the results from \cite{schwab} to the Boltzmann equation gives us a quantitative lower bound. This is a rather direct application of the weak Harnack inequality for integro-differential equations.

\begin{thm} \label{t:lowerbound-intro}
Assume $B$ is of the form \eqref{e:B-assumption}. Let $f$ be a solution to the homogeneous non-cutoff Botzmann equation \eqref{e:space-homogeneous-boltzmann}. Let $R>0$ and $T>0$. Assume that the initial data $f_0$ has finite mass, energy and entropy. If $\gamma+\nu < 0$, assume also that 
\[ \sup_{(t,v) \in (0,T) \times B_R} \int_{\R^N} |v-w|^{\gamma+\nu} f(t,w) \dd w \leq K_0.\]
Then there exists a lower bound $c$ so that
\[ f(T,v) \geq c, \ \ \text{ for all } v \in B_R. \]
This constant $c$ depends on $T$, $R$, the dimension $N$, the initial mass, energy and entropy, the cross section $B$ satisfying \eqref{e:B-assumption}, and also $K_0$ if $\gamma+\nu < 0$.
\end{thm}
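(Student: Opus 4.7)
The plan is to view $f$ as a nonnegative supersolution of a linear parabolic nonlocal equation and apply the weak Harnack inequality from \cite{schwab}. Since $f$ solves the homogeneous Boltzmann equation, mass and energy are conserved and the entropy is non-increasing, so Theorem \ref{t:Linfty-intro} applies and yields an $L^\infty$ bound on $f$ for positive times. Using Carleman's representation, one writes
\[ Q(f,f)(v) = L_f f(v) + h_f(v), \]
where
\[ L_f g(v) = \int_{\R^N} \bigl(g(v+w) - g(v)\bigr) K_f(v,w) \dd w \]
is an integro-differential operator of order $\nu$, whose kernel $K_f(v,\cdot)$ is a weighted surface integral of $f$ over the affine hyperplane through $v+w/2$ orthogonal to $w$, and $h_f$ is a lower-order remainder, bounded thanks to the $L^\infty$ control on $f$. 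Thus $f$ is a supersolution of $f_t - L_f f \geq -\|h_f\|_{L^\infty}$.

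Next I would verify that $K_f$ satisfies the structural hypotheses required by the weak Harnack inequality in \cite{schwab}. The symmetry $K_f(v,w)=K_f(v,-w)$ is inherited from the $\sigma \leftrightarrow -\sigma$ symmetry of $B$. Integral upper bounds on $K_f$ follow from the mass and energy of $f$, supplemented by $K_0$ precisely when $\gamma+\nu<0$ (which is when $|w|^{\gamma+\nu}$ from $B$ is not controllable at infinity by the mass alone). The essential condition is a pointwise lower bound $K_f(v,w) \geq \lambda |w|^{-N-\nu}$ for $w$ in a cone of directions of positive solid angle, uniform in $v \in B_R$. Here the mass/energy/entropy bounds enter: by the classical lemma (Carleman, Pulvirenti--Wennberg, Villani) these three bounds force $f(t,\cdot) \geq \delta_0$ on a measurable set $E \subset B_{R_0}$ of measure at least $\eta_0$, uniformly in $t$. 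For each $v \in B_R$, a cone of affine hyperplanes through $v+w/2$ orthogonal to $w$ (with opening depending on $R$ and $R_0$) intersects $E$ in a set of positive surface measure, which gives the desired pointwise lower bound on $K_f(v,w)$ in the corresponding cone of $w$.

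With these kernel bounds in place, the weak Harnack inequality of \cite{schwab} applied on a parabolic cylinder whose past cross-section covers $B_{R_0}$ yields
\[ \inf_{v \in B_R} f(T,v) \geq c_1 \biggl(\int_{(T/2,T)\times B_{R_0}} f^\epsilon \biggr)^{1/\epsilon} - c_2 \|h_f\|_{L^\infty}, \]
and the right-hand integral is bounded below by $\delta_0^\epsilon \eta_0 \cdot T/2$ from the concentration lemma. Absorbing the lower-order contribution (by shrinking the cylinder, or equivalently by an iteration in which the remainder shrinks geometrically) yields a positive constant $c>0$ with $f(T,v) \geq c$ for all $v \in B_R$. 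The main obstacle is the cone-wise pointwise lower bound on $K_f(v,w)$ uniformly for $v \in B_R$: the concentration lemma only provides an $L^1$-type statement (a measurable set on which $f$ is positive), and converting it into the required integral bound on a codimension-one hyperplane through each $v \in B_R$ is the delicate geometric estimate that drives the argument.
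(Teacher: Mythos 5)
Your overall strategy matches the paper's: decompose $Q=Q_1+Q_2$ through Carleman's representation, verify the structural kernel bounds for $K_f$ from the macroscopic quantities (symmetry; averaged upper bound from mass/energy, supplemented by $K_0$ when $\gamma+\nu<0$; a cone-wise pointwise lower bound from the concentration lemma), and then apply the weak Harnack inequality of \cite{schwab} with the measure-of-superlevel-set hypothesis supplied by that same concentration lemma. That much is right.

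The genuine gap is in your treatment of the remainder $h_f = Q_2(f,f)$. You propose to control $\|h_f\|_{L^\infty}$ via the $L^\infty$ estimate of Theorem \ref{t:Linfty-intro} and then absorb $-c_2\|h_f\|_{L^\infty}$ on the right of the weak Harnack. This has two problems. First, Theorem \ref{t:Linfty-intro} is not applicable under the hypotheses of this theorem: when $\gamma+\nu<0$ it requires a weighted $L^p$ bound on $f$ (and when $\gamma>2$ a $\gamma$-moment bound), whereas the present statement only assumes finite mass/energy/entropy plus a $(\gamma+\nu)$-moment bound; so the claimed $L^\infty$ control on $f$, and hence on $\tilde B * f$ when $\gamma<0$, is not available. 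Second, even granting it, the constant $c_2\|h_f\|_{L^\infty}$ does not shrink as the cylinder shrinks, so the proposed iteration does not absorb it and the conclusion could be vacuous. The paper avoids both issues by a single observation that you missed: the cancellation lemma (Lemma \ref{l:expression-for-Q2} together with Lemma \ref{l:Btilde}) gives $Q_2(f,f)(v)=(\tilde B\ast f)(v)\,f(v)$ with $\tilde B>0$, hence $Q_2(f,f)\geq 0$ pointwise. Thus $f_t\geq Q_1(f,f)$ is a clean supersolution inequality with zero right-hand side, and Theorem \ref{t:weak-harnack} applies directly; no $L^\infty$ bound on $f$, no source term, and no absorption are needed. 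A minor additional slip: in Carleman's representation (Lemma \ref{l:expression-for-Q1}) the kernel $K_f(v,v')$ integrates $f(v+w)$ over the hyperplane through $v$ orthogonal to $v'-v$, not the one through the midpoint $v+(v'-v)/2$.
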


This is a weaker result than the lower bound obtained by Mouhot in \cite{Mouhot-lowerbounds}, since we do not have an explicit control on how the lower bound deteriorated as $R$ increases. The result of \cite{Mouhot-lowerbounds}, in the non cut-off case, requires us to assume that the solution is in $L^\infty(W^{2,\infty})$. This follows from well known regularity estimates in many cases, but not for the full range of parameters $\gamma \in (-N,+\infty)$ and $\nu \in (0,2)$. Our result has the modest advantage that it requires lower regularity conditions for those parameters for which the smoothness of the solution is not a priori known.

\noindent \textbf{Notation. } Throughout the paper we use $a \approx b$ to denote that there is a universal constant $C$ such that $C^{-1} a \leq b \leq C a$.

We also use the names $v$, $v'$, $v_\ast$, $v'_\ast$, $\sigma$, $r$ and $\theta$ to be the quantities related by 
\eqref{e:var-r}, \eqref{e:var-theta},\eqref{e:var-vprime} and \eqref{e:var-vstarprime}. This means that we often take some of this values implicitly as functions in terms of the others.

When we say that a set $\Xi \subset \R^N$ is a cone, we mean simply that $\gamma \Xi =\Xi$ for any $\gamma >0$. That means that $\Xi$ is a union a rays coming from the origin. Any cone is characterized by its intersection to the unit sphere. Thus, if $A$ is any \textbf{arbitrary} subset of the unit sphere (i.e. $A \subset \partial B_1$) then
\[ \Xi = \{ r \sigma : \sigma \in A \text{ and } r>0\},\]
is a cone.

\subsection{Comparison with previous results}

Coercivity estimates like \eqref{e:subellipticity} were used extensively in the last 15 years to obtain results for the non cut-off Boltzmann equation. Its first appearance was probably in \cite{MR1324404} as an estimate for the Kac equation. Other early forms of this coercivity estimates appeared in \cite{MR1407542}, \cite{MR1475459} and \cite{MR1649477}. In \cite{alexandre2000entropy} they found a lower bound for the entropy dissipation which was based on an estimate of the form \eqref{e:subellipticity}. Variations of this estimate were used to obtain various regularity results for the space homogeneous Boltzmann equation without cut-off in \cite{alexandre2002boltzmann}, \cite{desvillettes2005smoothness}, \cite{alexandre2005littlewood}, \cite{MR2425608}, \cite{MR2556715}, \cite{morimoto2009regularity}, \cite{alexandre2007littlewood}, \cite{MR2525118}, \cite{MR2679746}, \cite{chen2011smoothing}, \cite{MR2959943} and \cite{MR2917172}. It was also used for other types of results, including estimates for solutions in the space in-homogeneous case in \cite{alexandre2002boltzmann}, \cite{MR2543976}, \cite{MR2677982}, \cite{MR2679369}, \cite{MR2795331}, \cite{alexandre2011boltzmann}, \cite{MR2863853}, \cite{MR2885564}, \cite{MR2629879} and \cite{MR2784329}. Coercive inequalities are obviously a tremendously successful tool in the study of the non cut-off Boltzmann equation. The following paragraph is copied from \cite{MR2556715}:
\begin{quote}
{\em ``Turning to Boltzmann equation, as usual in the general theory of Partial
Differential Equations, it is important to have (good) a priori estimates.
The natural ones for Boltzmann equation (and up to now, the only known ones at
the exception of one dimensional models) are based on symmetrization properties
of the collision operator.''}
\end{quote}
The purpose of this paper is to show a new approach to obtain a priori estimates on the regularity of the solution $f$ of the Boltzmann equation. We use the regularity theory for parabolic integro-differential equations which was recently developed (especially the result in \cite{schwab}). We stress that no estimate of the form \eqref{e:subellipticity} is used in our proof even indirectly. Usually coercive inequalities like \eqref{e:subellipticity} are proved using Fourier analysis. The methods in this article do not use Fourier analysis or even estimates in terms of Sobolev norms at all. 

The final results that we obtain are not necessarily better than the results in the literature for the space homogeneous Boltzmann equation without cut-off. This is not surprising given the maturity of the coercivity methods. We believe that the main value of this article lies in the introduction of the new link between the Boltmann equation and the recent results in integro-differential equations. We still compare the final results below.

We should also point out that a different method to obtain some regularization results was given in \cite{MR1768239} and \cite{MR2851696}. In the second article, the authors study the space homogeneous Boltzmann equation in 2D with hard potentials ($\gamma>0$) using probabilistic methods. They obtain an estimate in a Sobolev space assuming that the solution $f$ has exponential moments. The estimate is in better Sobolev spaces when $\gamma$ is large.

Our result in Corollary \ref{c:Calpha-Boltzmann-intro} gives estimates in $C^\alpha_{loc}$. This is weaker than the usual estimates in Schwartz spaces or $C^\infty$, using coercivity estimates. At the moment, it is not clear whether we can obtain higher derivative estimates using the methods of this paper. One possibility would be to apply the methods from \cite{serra2014regularity} or \cite{jin2014schauder}, but more work needs to be done. The main difficulty is that the estimate in Theorem \ref{t:calpha-linearized-intro} is local, and that makes it harder to bootstrap a higher derivative estimate. This difficulty was essentially present in the early works using coercive estimates. Indeed, the estimate in \cite{alexandre2000entropy} is in a local Sobolev space. The analysis of the proof in \cite{alexandre2000entropy} shows that the estimate is global only when $\gamma=0$. Because of this fact, the first smoothness result in \cite{desvillettes2005smoothness} assumes that 
\[ B(r,\theta) = \Psi(r) b(\cos(\theta)),\]
where $\Psi(r)$ is strictly bounded below. It took a long time to extend the smoothness result to the natural cross sections of the form \eqref{e:B-assumption} for values $\gamma \neq 0$. This was achieved in \cite{chen2011smoothing}, \cite{MR2677982} and \cite{MR2917172} obtaining coercive estimates in global weighted Sobolev spaces. At the moment, it is not clear whether the estimate in Theorem \ref{t:calpha-linearized-intro} can be made global for any pair $(\gamma,\nu)$.

Interestingly, the results available are much more satisfactory when $\gamma+\nu > 0$, as well as our Corollary \ref{c:Calpha-Boltzmann-intro}. Indeed, the result in \cite{MR2677982} holds only when $\gamma+\nu > 0$, and the result in \cite{chen2011smoothing} is unconditional only when $\gamma+\nu > 0$. For the case $\gamma+\nu \leq 0$, in \cite{chen2011smoothing} they assume that $\gamma+\nu > -1$ and $\langle v \rangle^l f(v) \in L^\infty([0,T],L^{3/2})$ for all $l>0$. They work in the 3D case, and this is a slightly stronger condition than the one we impose in Corollary \ref{c:Calpha-Boltzmann-intro} when $\gamma+\nu \leq 0$. Indeed, note that $3/2 > 3 / (3+\gamma+\nu)$ if $\gamma+\nu > -1$.

The existence of smooth solutions for the space homogeneous Boltzmann equation is not well understood when $\gamma+\nu \leq 0$. There is no unconditional regularity result by any known method. Note that in 3D, if $\gamma \to -3$ and $\nu \to 2$, the equation converges to the Landau equation which corresponds to Coulombic potentials. Thus, there is strong physical relevance to the study of this range. See Remark \ref{r:enhanceddiffusion} for some discussion of why the case $\gamma+\nu < 0$ may not be achievable by any of the current methods.

There is a physical interpretation of the sign of $\gamma+\nu$ that we now describe. Let $f_1$ and $f_2$ be two non negative smooth functions with compact support and $f(v) = f_1(v) + f_2(v-v_0)$. Assume that $f_1$ achieves its maximum at $v=0$ and $f_2(0) > 0$. One can easily see, for example using the expressions from Lemma \ref{l:expression-for-Q2} and following the proof of Lemma \ref{l:K-above}, that
\[
\lim_{|v_0| \to \infty} Q(f,f)(0) - Q(f_1,f_1)(0) = \begin{cases}
0 & \text{ if } \gamma+\nu < 0, \\
-\infty & \text{ if } \gamma+\nu > 0.
\end{cases}
\]
In the case $\gamma+\nu < 0$ the equation is more \emph{local}, which makes it harder to understand.

The $L^\infty$ bound of Theorem \ref{t:Linfty-intro} appears to be new in the space inhomogeneous case. In \cite{gamba2009upper}, they obtained a Maxwellian upper bound in the homogeneous cut-off case, assuming that the initial value is also bounded. They use maximum principle ideas, based on the discussion in section 6, chapter 2, in \cite{villani2002review}. There, the basic decomposition $Q=Q_1+Q_2$, which we describe in depth in section \ref{s:twoterms}, is outlined. Our proof of Theorem \ref{t:Linfty-intro} does not follow the same method as in \cite{gamba2009upper} but some ideas are naturally common. In this proof, we do not apply any result from \cite{schwab} either. We do follow some maximum principle intuition from integro-differential equations.

Using coercivity estimates, conditional $C^\infty$ regularity results for the space in-homogeneous equation where obtained in \cite{MR2543976}, \cite{MR2679369} and \cite{MR2885564}. Those results require the assumption that the solution $f$ is bounded in $H^5$ with respect to all variables. Theorem \ref{t:Linfty-intro} gives only $L^\infty$ regularity, but it depends on a much milder a priori assumption on the solution $f$. Indeed, if $\gamma \leq 2$ and $\gamma+\nu > 0$, then it only depends on the macroscopic quantities staying under control.

At the moment, it seems that in order to extend the result of Theorem \ref{t:calpha-linearized-intro} to the space inhomogeneous case, we would need new developments in the regularity theory of integro-differential equations.

\section{Preliminaries}

\subsection{Macroscopic quantities}

It is well known that the Boltzmann equation conserves mass and energy. Its entropy is non increasing. In the space homogeneous case, these quantities are defined by the formulas

\begin{description}
\item[Conservation of mass.] 
The mass $M$ is independent of time.
\begin{equation} \label{e:mass}  M := \int_{\R^N} f(t,v) \dd v
\end{equation}
\item[Conservation of energy.] The energy $E$ is independent of time.
\begin{equation} \label{e:energy}  E = \int_{\R^N} f(t,v) |v|^2 \dd v 
\end{equation}
\item[Entropy.] The entropy $H(t)$ is monotone decreasing in time.
\begin{equation} \label{e:entropy}  H(t) = \int_{\R^N} f(t,v) \log f(t,v) \dd v 
\end{equation}
\end{description}


\subsection{Estimates for general integro-differential equations}

\label{s:pie}

In this section we explain the relevant results from \cite{schwab}. We adapt the notation only slightly in order to match, to some extent, the notation which will be used below related to the Boltzmann equation.

An integro-differential equation involves a non negative kernel
 $K: [t_0,t_1] \times B_R \times \R^N \to \R$ and a function $u$ (the \emph{solution}) which satisfies the equation
\[ u_t(t,v) - \left( \int_{\R^N} \left( u(t,v') - u(t,v) \right) K(t,v,v') \dd v' \right) = h(t,v).\]
Here, the function $h$ is a right hand side.

Following \cite{schwab}, we make the following three assumptions on $K$. Here $\lambda$, $\Lambda$ and $\mu$ are three arbitrary positive constants (independent of $v$, $t$, $v'$, etc...).

We assume $K$ has the following symmetry property. For all $v \in B_R$, $t \in [t_0,t_1]$ and $w \in \R^N$, 
\begin{equation} \label{e:K-symmetric}  K(t,v,v+w) = K(t,v,v-w).
\end{equation}

We assume the following upper bound on average. There exists some constant $\Lambda>0$, so that for all $(t,v) \in [t_0,t_1] \times B_R$,
\begin{equation} \label{e:K-bound-above}  \int_{B_{2r}(v) \setminus B_r(v)} K(t,v,v') \dd v' \leq \Lambda r^{-\nu}.
\end{equation}

We assume that, at every point, $K$ is bounded below in a cone of directions. More precisely, that for every point $v \in B_R$ and $t \in [t_0,t_1]$, there exists a set $A = A(t,v) \subset \partial B_1$ such that its measure on the sphere is bounded below, $|A| \geq \mu > 0$, and we have
\begin{equation} \label{e:K-bound-below}  K(t,v,v+r\sigma) \geq \lambda r^{-N-\nu}, \ \text{ for all } r>0 \text{ and } \sigma \in A.
\end{equation}

There is a small change of notation between our integro-differential equations in this paper and in \cite{schwab}. What here we write $K(t,v,v')$ corresponds to $K(t,v,v'-v)$ in \cite{schwab}. The reason for this change in notation is to write everything in terms of $v$ and $v'$ which are commonly used in Boltzmann literature, instead of writing everything in terms of $v$ and $v'-v$.

The symmetry assumption \eqref{e:K-symmetric} is not made in \cite{schwab}. It simplifies some assumptions and computations. In particular, the assumption (A4) in \cite{schwab} is irrelevant. Since the kernels we obtain from the Boltzmann equation are symmetric, we assume symmetry here.

The assumption \eqref{e:K-bound-above} corresponds to the assumption (A2) in \cite{schwab}. The assumption \eqref{e:K-bound-below} corresponds to the assumption (A3) in \cite{schwab}. Note that we will always have $A = -A$ because of the symmetry assumption.

The following theorem is sometimes referred as \emph{weak-Harnack} inequality. It is a weaker version of Theorem 6.1 in \cite{schwab}.

\begin{thm} \label{t:weak-harnack}
Assume that $u \geq 0$ in $[0,T] \times \R^N$ and satisfies
\[ u_t(t,v) - \left( \int_{\R^N} \left( u(t,v') - u(t,v) \right) K(t,v,v') \dd v' \right) \geq 0,\]
for all $t \in [0,T]$ and $v \in B_R$. Assume also that $K$ satisfies \eqref{e:K-symmetric}, \eqref{e:K-bound-above} and \eqref{e:K-bound-below} and moreover, there is an $\ell>0$ and $m>0$ such that
\[ |\{(t,v) \in [0,T/2] \times B_{R/4} : u(t,v)>\ell\}| \geq m,\]
then
\[ u(t,v) \geq c\ell,\]
for all $t \in [T/2,T]$ and $v \in B_{R/4}$. Here $c>0$ is a constant depending on $\nu$, $\lambda$, $\Lambda$, $\mu$, $m$ and $N$.
\end{thm}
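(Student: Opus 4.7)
The strategy is to deduce the theorem directly from the main weak Harnack inequality of \cite{schwab} (Theorem 6.1 there), using Chebyshev's inequality to convert the measure hypothesis into an integral lower bound.

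First I would match the hypotheses. The symmetry assumption \eqref{e:K-symmetric}, the on-average upper bound \eqref{e:K-bound-above}, and the conic lower bound \eqref{e:K-bound-below} translate into assumptions (A2) and (A3) of \cite{schwab} after the change of variable $w = v' - v$ explained in the preceding discussion. Moreover, as the author notes just above the statement, symmetry makes assumption (A4) of \cite{schwab} automatic. With these verifications in hand, Theorem 6.1 of \cite{schwab} applies to the supersolution $u$ and yields an $L^{p_0}$ weak Harnack estimate: there exist $p_0 > 0$ and $C > 0$, depending only on $\nu$, $\lambda$, $\Lambda$, $\mu$ and $N$, such that
\[
    \left( \int_{[0,T/2] \times B_{R/4}} u(t,v)^{p_0} \dd v \dd t \right)^{1/p_0}
    \leq C \inf_{[T/2,T] \times B_{R/4}} u.
\]
(Because the assumptions \eqref{e:K-bound-above}--\eqref{e:K-bound-below} are scale-invariant at order $\nu$, one may, if needed, normalise first to unit cylinders and then rescale back.)

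Next I would turn the measure hypothesis into a lower bound on the left-hand side above. On the set where $u(t,v) > \ell$, which has measure at least $m$ inside $[0,T/2] \times B_{R/4}$, the integrand satisfies $u(t,v)^{p_0} > \ell^{p_0}$, so
\[
    \int_{[0,T/2] \times B_{R/4}} u(t,v)^{p_0} \dd v \dd t \geq m \ell^{p_0}.
\]
Combining the two displays and taking $p_0$-th roots yields
\[
    \inf_{[T/2,T] \times B_{R/4}} u \geq \frac{m^{1/p_0}}{C} \ell,
\]
which is the stated conclusion with $c = m^{1/p_0}/C$. Since $p_0$ and $C$ depend only on $\nu, \lambda, \Lambda, \mu, N$, the final constant $c$ depends on $\nu, \lambda, \Lambda, \mu, m, N$, exactly as required.

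The only delicate point is the first step: carefully checking that the three assumptions of \cite{schwab} are matched once we pass from the convention $K(t,v,w)$, with $w = v'-v$, to our $K(t,v,v')$, and confirming that assumption (A4) there is truly absorbed by \eqref{e:K-symmetric}. After that bookkeeping, the remainder of the argument is a one-line Chebyshev computation. I would therefore consider this matching of conventions (rather than any new analytic ingredient) the main obstacle, since the deep content of the theorem lives entirely inside the cited result.
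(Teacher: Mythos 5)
Your approach is correct and matches the paper's: the paper itself gives no proof of Theorem \ref{t:weak-harnack}, simply asserting it is ``a weaker version of Theorem 6.1 in \cite{schwab},'' and your derivation---matching assumptions (A2), (A3), noting (A4) is absorbed by symmetry, then applying the $L^{p_0}$ weak Harnack inequality and a Chebyshev lower bound on $\int u^{p_0}$---is exactly the bookkeeping that phrase leaves implicit. Nothing further is needed.
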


The following theorem gives us H\"older estimates. It is a version of Theorems 7.1 and 7.2 in \cite{schwab}.

\begin{thm} \label{t:Calpha-general}
Assume that $u$ is a bounded function in $[0,T] \times \R^N$ and satisfies
\[ u_t(t,v) - \left( \int_{\R^N} \left( u(t,v') - u(t,v) \right) K(t,v,v') \dd v' \right) = h(t,v),\]
for all $t \in [0,T]$ and $v \in B_R$. Assume also that $K$ satisfies \eqref{e:K-symmetric}, \eqref{e:K-bound-above} and \eqref{e:K-bound-below}. We have
\[ \|u\|_{C^\alpha([T/2,T] \times B_{R/2})} \leq C \left( \|u\|_{L^\infty([0,T] \times \R^N)} + \|h\|_{L^\infty([0,T] \times B_R)} \right).\]
\end{thm}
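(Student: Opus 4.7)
The statement is the standard Hölder estimate for a nondivergence-form integro-differential equation with symmetric kernel of order $\nu$ bounded above on average and bounded below in a set of directions. My plan is to deduce it from Theorem \ref{t:weak-harnack} by the classical oscillation-decrease iteration of Krylov--Safonov / De Giorgi type, adapted to the nonlocal setting as in the author's earlier work.

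The scheme is as follows. By linearity and by the natural parabolic scaling $(t,v)\mapsto(\rho^\nu t,\rho v)$, which preserves the class of kernels satisfying \eqref{e:K-symmetric}--\eqref{e:K-bound-below} with the same constants $\lambda,\Lambda,\mu$, I reduce to proving a single oscillation-decrease lemma: there exists $\eta\in(0,1)$ depending only on $\nu,\lambda,\Lambda,\mu,N$ such that if $\osc_{[0,T]\times\R^N} u\le 1$, if $u$ solves the equation on $[0,T]\times B_R$ with $\|h\|_{L^\infty}\le \eta$, then $\osc_{[3T/4,T]\times B_{R/4}} u\le 1-\eta$. Iterating this on a dyadic sequence of cylinders centered at an interior point and summing the geometric series yields the Hölder modulus in the standard way.

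To prove the oscillation-decrease lemma, write $m=\inf u$ and $M=\sup u$ on $[0,T]\times\R^N$, and consider the midpoint: on the subcylinder $[0,T/2]\times B_{R/4}$ either $\{u\ge(m+M)/2\}$ or $\{u\le(m+M)/2\}$ has measure at least half that of the subcylinder. Assume the first case (the other is symmetric, applied to $M-u$). Set $w=u-m$; then $w\ge 0$ globally, $w$ satisfies the same integro-differential equation with right-hand side $h$, and $\{w\ge(M-m)/2\}$ has controlled measure in $[0,T/2]\times B_{R/4}$. Applying Theorem \ref{t:weak-harnack} to $w$ (after absorbing the small inhomogeneity $h$ into the pointwise lower bound, which is permissible since $\|h\|_{L^\infty}\le\eta$ is small) yields $w\ge c(M-m)/2$ on $[T/2,T]\times B_{R/4}$, which decreases the oscillation by a definite fraction.

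The main obstacle I expect is not the abstract iteration but the careful handling of the tails, since the weak Harnack requires a \emph{global} nonnegative function. At each step of the dyadic iteration, after rescaling to the unit cylinder, the function $w$ is only controlled by the previous oscillation inside, while outside one retains only the global $L^\infty$ bound. One has to verify that the tail contribution $\int_{\R^N\setminus B_1} w(v')\,K(t,v,v')\,\dd v'$ produces at most an $O(1)$ perturbation of the right-hand side; this follows by decomposing into dyadic annuli and summing $\Lambda r^{-\nu}$ bounds from \eqref{e:K-bound-above} against the geometrically growing oscillation bounds obtained at the previous steps of the iteration. This bookkeeping, which is exactly what is done in \cite{schwab}, is the delicate part; once it is in place the Hölder estimate follows by the standard argument.
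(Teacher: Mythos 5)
The paper does not prove this statement; it is quoted directly as a version of Theorems 7.1 and 7.2 of \cite{schwab}, so there is no in-paper proof to compare against. Your sketch --- parabolic rescaling $(t,v)\mapsto(\rho^{\nu}t,\rho v)$ (which, as you note, preserves the constants $\lambda,\Lambda,\mu$ in \eqref{e:K-symmetric}--\eqref{e:K-bound-below}) to reduce to a single diminish-of-oscillation step, the weak Harnack of Theorem \ref{t:weak-harnack} applied to $u-\inf u$ or $\sup u - u$ with the small inhomogeneity absorbed via a comparison of the form $u+\eta t$, and the dyadic iteration with control of the nonlocal tails to compensate for the failure of global nonnegativity at later rescaled steps --- is precisely the Krylov--Safonov-type strategy implemented in \cite{schwab}, and you correctly identify the tail bookkeeping, which in that reference is handled by a truncation/growth-lemma argument summing the $\Lambda r^{-\nu}$ annular bounds against the geometrically bounded prior oscillations, as the one genuinely delicate point.
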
 

In \cite{schwab}, the solutions are understood in the viscosity sense and the equations are restated in terms of maximal operators. For the purpose of this paper we assume that we have classical solutions and we focus on the a priori estimates. The concept of viscosity solution used in \cite{schwab} is not necessarily compatible with the usual concept of weak solution for the Boltzmann equation.

\section{The Boltzmann collision operator as the sum of two terms}
\label{s:twoterms}

In this section we rewrite the bilinear form $Q$ as a sum of two terms. The first term, which we call $Q_1$, has the form of the integro-differential operators matching those described in section \ref{s:pie}. The second term, which we call $Q_2$ is, in some sense, a lower order term.

Note that the value of $Q(f,f)$ is not affected if we replace $B(r, \theta)$ for another function $\tilde B$ such that
\[ B(r,\theta) + B(r,\theta+\pi) = \tilde B(r,\theta) + \tilde B(r,\theta+\pi).\]
There are a few canonical choices that we can make here. It would be natural (and it is somewhat common) to have all non zero values on one side by imposing $B(r,\theta)=0$ if $\cos \theta<0$. However, this is not the most convenient choice for the rest of our paper. We will assume that
\begin{equation} \label{e:nice-other-side}  B(r,\theta) = r^\gamma b(\cos \theta) \qquad \text{where } b(\cos \theta) \approx |\sin \theta|^{\gamma+\nu+1} \qquad \text{if } \cos \theta < 0.
\end{equation}
Note that for any $B$ which satisfies \eqref{e:B-assumption}, there is always a $\tilde B$ which also satisfies \eqref{e:nice-other-side}. Thus, we can assume \eqref{e:nice-other-side} without loss of generality. The purpose of \eqref{e:nice-other-side} is a purely technical convenience which will be apparent in the proof of Corollary \ref{c:Kf-approx}. With this choice, we have
\[ B(|v-v_\ast|,\theta) \approx \frac{|v-v_\ast|^{N-2} |v-v_\ast'|^{\gamma+\nu+1}}{|v-v'|^{N-1+\nu}}.\]


Starting from \eqref{e:Boltzmann-Q}, we add and subtract $f(v_\ast') g(v)$ to the factor in the left and split $Q(f,f)$ between two terms which we call $Q_1$ and $Q_2$.

\begin{align}
Q_1(f,g) &= \int_{\R^N} \int_{\partial B_1} \left(g(v') - g(v) \right) f(v'_\ast) B\left( r,\theta \right) \dd \sigma \dd v_\ast. \label{e:Q1} \\
Q_2(f,g) &= g(v) \int_{\R^N} \int_{\partial B_1} \left( f(v'_\ast) - f(v_\ast) \right) B\left( r,\theta \right) \dd \sigma \dd v_\ast. \label{e:Q2}
\end{align}

These two terms for $Q$ have been considered in \cite{villani2002review}, chapter 2, section 6.2. As it is pointed out there, when $g$ is smooth ($g \in C^2$ would certainly be enough) and $f$ is bounded, $Q_1$ and $Q_2$ are well defined even in the non-cutoff case.

The way we understand these two terms is the following. For any given $f$, the first term $Q_1$ is an integro-differential operator applied to $g$. In the non-cutoff case its kernel is in the class described in section \ref{s:pie} and its parameters depend only on the macroscopic values of $f$. Thus, we can eventually apply the general results for parabolic integral equations of Theorems \ref{t:weak-harnack} and \ref{t:Calpha-general} and derive a regularization for the solution based on this term.

The second term $Q_2$ should be considered a lower order term. In many cases, we will be able to show that this term is bounded.

In the following sections, we provide a detailed analysis of both terms. The formulas we derive are closely related to similar expressions obtained in \cite{alexandre2000around}. See also section 3 in \cite{MR2556715}.
\section{The term $Q_1$}

\begin{lemma} \label{l:expression-for-Q1}
The term $Q_1$ can be rewritten in the following form
\begin{equation} \label{e:R1}
Q_1(f,g) = \int_{\R^N} (g(v')-g(v)) K_f(t,v,v') \dd v',
\end{equation}
where
\[ K_f(t,v,v') = \frac{2^{N-1}}{|v'-v|} \int_{\{w: w \cdot (v'-v) = 0\}} f(t,v+w) B \left( r, \theta \right) r^{-N+2} \dd w. \]
Note that the integral is supported for $w$ on the hyperplane perpendicular to $v'-v$. We now have.
\begin{align*}
r &= \sqrt{ |v'-v|^2 + |w|^2 }, \\
\cos (\theta/2) &= |w| / r, \\
v_\ast' &= v + w, \\
v_\ast &= v' + w.
\end{align*}
\end{lemma}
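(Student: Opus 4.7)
The plan is a Carleman-type change of variables: replace the integration over $(v_\ast,\sigma) \in \R^N\times \partial B_1$ by integration over the codimension-one manifold $M = \{(v',v_\ast') : (v'-v)\cdot(v_\ast'-v)=0\} \subset \R^N\times \R^N$. Writing $v_\ast' = v + w$ with $w \in (v'-v)^\perp$ is a natural chart of $M$ and matches the statement of the lemma.

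First I would verify that the collision formulas \eqref{e:var-vprime}--\eqref{e:var-vstarprime} define a bijection $\Phi : (v_\ast,\sigma) \mapsto (v',v_\ast')$ from $\R^N\times\partial B_1$ onto $M$ (off a null set). The perpendicularity is immediate: both $v'-v$ and $v_\ast'-v$ equal $\tfrac{1}{2}(v_\ast-v) \pm \tfrac{r}{2}\sigma$, so their dot product is $\tfrac{1}{4}(|v_\ast-v|^2 - r^2)=0$. The inverse map is $v_\ast = v' + v_\ast' - v$ and $\sigma = (v'-v_\ast')/|v'-v_\ast'|$. Setting $u = v'-v$ and $w = v_\ast'-v$, the orthogonality $u\cdot w=0$ immediately yields $v_\ast = v'+w$ and $r = |v_\ast-v| = |u+w| = \sqrt{|v'-v|^2 + |w|^2}$, and the angle is recovered via $\cos\theta = \sigma\cdot(v_\ast-v)/r = (|u|^2-|w|^2)/r^2$, which after a half-angle identity expresses $\theta/2$ in terms of $|w|/r$ and $|v'-v|/r$.

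The heart of the proof is the Jacobian computation for $\Phi$. Translating so that $v=0$, I would write $v_\ast = r\,\hat v_\ast$ in polar form and decompose $\sigma = \cos\beta\,\hat v_\ast + \sin\beta\,\hat m$ relative to the axis $\hat v_\ast$, with $\hat m\in\partial B_1\cap \hat v_\ast^\perp$ and $\beta\in[0,\pi]$, so that $\dd v_\ast\,\dd\sigma = r^{N-1}\sin^{N-2}\beta\,\dd r\,\dd\hat v_\ast\,\dd\beta\,\dd\hat m$. A half-angle computation gives $|u| = r\cos(\beta/2)$, $|w| = r\sin(\beta/2)$, and the orthonormal pair $\hat u,\hat w$ arises by rotating $(\hat v_\ast,\hat m)$ through angle $\beta/2$ in their common span; this rotation is orthogonal, so it preserves the product of spherical measures, $\dd\hat v_\ast\,\dd\hat m = \dd\hat u\,\dd\hat w$. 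The $2\times 2$ Jacobian of $(r,\beta)\mapsto(|u|,|w|)$ is $r/2$, and combining this with the polar expansion $\dd v'\,\dd w = |u|^{N-1}|w|^{N-2}\dd|u|\,\dd\hat u\,\dd|w|\,\dd\hat w$ together with $\sin\beta = 2\sin(\beta/2)\cos(\beta/2)$ collapses everything into
\[ \dd v_\ast\,\dd\sigma = \frac{2^{N-1}}{|v'-v|\,r^{N-2}}\,\dd v'\,\dd w. \]
Inserting this into \eqref{e:Q1} and pulling $(g(v')-g(v))$ out of the inner integral over $w$ yields the claimed representation $Q_1(f,g) = \int (g(v')-g(v)) K_f(v,v')\,\dd v'$ with $K_f$ as stated.

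I expect the main obstacle to be the careful bookkeeping of the trigonometric factors in the Jacobian step: the $r^{N-1}$ and $\sin^{N-2}\beta$ on the source side must exactly combine with the $|u|^{N-1}|w|^{N-2}$ from the polar expansion of $\dd v'\,\dd w$ and the $r/2$ from the sub-Jacobian of $(r,\beta)\mapsto(|u|,|w|)$ to leave the clean residual $2^{N-1}/(|v'-v|r^{N-2})$. Once that simplification is in hand, identifying $K_f$ and the auxiliary variables $v_\ast',v_\ast,r,\theta$ in terms of $v,v',w$ is just an algebraic transcription.
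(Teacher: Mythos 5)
Your proof is correct, and the final Jacobian factor $2^{N-1}/(|v'-v|\,r^{N-2})$ agrees with the paper; but the two computations of that Jacobian are genuinely different. You parametrize both sides directly as manifolds: $(v_\ast,\sigma) \in \R^N \times \partial B_1$ via $(r,\hat v_\ast,\beta,\hat m)$ and $(v',w)$ on the Carleman manifold $M$ via $(|u|,\hat u,|w|,\hat w)$, observe that the frame map $(\hat v_\ast,\hat m)\mapsto(\hat u,\hat w)$ is, for fixed $\beta$, right multiplication by a fixed element of $O(2)$ on the Stiefel manifold $V_2(\R^N)$ and hence measure-preserving, and then combine the $2\times 2$ Jacobian $r/2$ of $(r,\beta)\mapsto(|u|,|w|)$ with the half-angle identity $\sin\beta = 2\sin(\beta/2)\cos(\beta/2)$ to land on $2^{N-1}/(|v'-v|\,r^{N-2})$. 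The paper (Lemma~\ref{l:change-of-variables}) instead extends both constrained variables to the full ambient spaces, smears $\dd\sigma$ into a thin annular measure $\varphi_\eps(|\sigma|)\dd\sigma$ on $\R^N$, computes the flat $2N\times 2N$ Jacobian of $(v_\ast,\sigma)\mapsto(v',w)$ by a block-determinant (obtaining $r^N/2^N$), then tracks how the annular thickness on the $\sigma$-side maps to a slab of width $\approx \eps r^2/(2|v'|)$ transverse to the hyperplane $\{w\cdot v' = 0\}$, and passes to the limit $\eps\to 0$. Your intrinsic approach avoids the regularization-and-limit argument, at the cost of heavier spherical-coordinate bookkeeping and the (correct but somewhat delicate) observation about the measure on $V_2(\R^N)$; the paper's extrinsic approach trades that geometric bookkeeping for a short linear-algebra determinant plus a careful shell-width asymptotic. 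Both are legitimate routes to the same identity.

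One small point of caution, not a gap in your argument but a place where your notation and the lemma's disagree: the angle variable. You write $\cos\theta = (|u|^2-|w|^2)/r^2$, which by the half-angle identity gives $\sin(\theta/2) = |w|/r$ and $\cos(\theta/2) = |v'-v|/r$; the lemma statement instead records $\cos(\theta/2) = |w|/r$. This sign/convention discrepancy is already present in the paper itself (it is used consistently in Corollary~\ref{c:Kf-approx}) and has no bearing on the validity of the Jacobian, since the final identity and the subsequent estimates only use the symmetric combination $r^2 = |v'-v|^2 + |w|^2$; but when transcribing the auxiliary variables into the stated form you should be aware you are following the paper's convention rather than the one your orthogonality computation naturally produces.
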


The proof consists of an elementary (but somewhat complicated) change of variables. It is explained in Lemma \ref{l:change-of-variables}, which we write in the appendix.

Lemma \ref{l:expression-for-Q1} describes how to obtain the kernel $K_f(v,v')$ in terms of a function $f: \R^N \to \R$. The $t$ dependence in $K_f(t,v,v')$ comes from the fact that $K_f(t,\cdot,\cdot)$ is computed using $f(t,\cdot)$. In the following lemmas, we omit the $t$ variable because the computations concern a fixed time $t$.

\begin{cor} \label{c:Kf-approx}
Assuming \eqref{e:nice-other-side}, then
\begin{equation}\label{e:Kg-approx}
K_f(v,v') \approx \left( \int_{\left\{w \cdot (v'-v) = 0 \right \}} f(v+w)\ |w|^{\gamma + \nu + 1}  \dd w \right) |v'-v|^{-N-\nu}. 
\end{equation}
\end{cor}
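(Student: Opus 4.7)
The plan is to insert the formula for $K_f$ from Lemma \ref{l:expression-for-Q1}, write $B(r,\theta) = r^\gamma b(\cos\theta)$, and reduce the estimate of the integrand to an $\approx$-estimate of $b(\cos\theta)\, r^{\gamma-N+2}$ in terms of $|w|$ and $|v'-v|$. The key geometric input is that the parametrization in Lemma \ref{l:expression-for-Q1} gives $\cos(\theta/2) = |w|/r$ and hence $\sin(\theta/2) = |v'-v|/r$, together with $r^2 = |v'-v|^2 + |w|^2$. From this I will obtain $\cos\theta = (|w|^2 - |v'-v|^2)/r^2$ and $|\sin\theta| = 2|v'-v|\,|w|/r^2$.

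I then split the $w$-integral into two regions according to the sign of $\cos\theta$. First, when $|w|\geq |v'-v|$, we have $\cos\theta \geq 0$, so $\theta \in [0,\pi/2]$, and $r \approx |w|$. Using the assumption $b(\cos\theta) \approx |\theta|^{-(N-1)-\nu}$ from \eqref{e:B-assumption} together with $\theta \approx \sin(\theta/2) \approx |v'-v|/r$, I find
\[ b(\cos\theta)\, r^{\gamma-N+2} \approx r^{N-1+\nu} |v'-v|^{-(N-1)-\nu}\, r^{\gamma-N+2} = r^{\gamma+\nu+1} |v'-v|^{-(N-1)-\nu} \approx |w|^{\gamma+\nu+1} |v'-v|^{-(N-1)-\nu}. \]
Second, when $|w| < |v'-v|$, we have $\cos\theta < 0$ and $r \approx |v'-v|$. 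Using \eqref{e:nice-other-side}, namely $b(\cos\theta) \approx |\sin\theta|^{\gamma+\nu+1}$, together with $|\sin\theta| \approx |v'-v|\,|w|/r^2 \approx |w|/|v'-v|$, I find
\[ b(\cos\theta)\, r^{\gamma-N+2} \approx \left(\frac{|w|}{|v'-v|}\right)^{\gamma+\nu+1} |v'-v|^{\gamma-N+2} = |w|^{\gamma+\nu+1}\, |v'-v|^{-(N-1)-\nu}. \]

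The main thing to appreciate is that the two expressions agree — this is precisely the technical reason for the asymmetric choice \eqref{e:nice-other-side}, which compensates the different scalings of $b$ on the two sides of $\cos\theta = 0$ so that both regimes contribute a comparable size on the hyperplane. Once the unified bound $B(r,\theta)\, r^{-N+2} \approx |w|^{\gamma+\nu+1}\,|v'-v|^{-(N-1)-\nu}$ is established on the plane $\{w\cdot(v'-v)=0\}$, I substitute into the formula of Lemma \ref{l:expression-for-Q1}:
\[ K_f(v,v') \approx |v'-v|^{-1} \int_{\{w\cdot(v'-v)=0\}} f(v+w)\, |w|^{\gamma+\nu+1}\, |v'-v|^{-(N-1)-\nu} \dd w, \]
which simplifies to \eqref{e:Kg-approx}. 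The main obstacle is the careful bookkeeping of the two asymptotic regimes for $b(\cos\theta)$ and verifying that they yield the same power of $|w|$ and $|v'-v|$; the change-of-variables itself is already handled in Lemma \ref{l:expression-for-Q1}.
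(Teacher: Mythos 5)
Your proof is correct and follows the same approach as the paper: start from Lemma \ref{l:expression-for-Q1}, split into the two regimes $\cos\theta\ge 0$ (where $r\approx|w|$) and $\cos\theta<0$ (where $r\approx|v'-v|$), and verify that the two asymptotics of $b$ combine in both cases to the single expression $B(r,\theta)r^{-N+2}\approx|w|^{\gamma+\nu+1}|v'-v|^{-(N-1)-\nu}$. The only cosmetic difference is that you phrase the estimates via $\theta\approx|v'-v|/r$ and $|\sin\theta|\approx|w|/|v'-v|$, while the paper uses the equivalent quantities $\sin(\theta/2)$ and $\cos(\theta/2)$.
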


\begin{proof}
Using Lemma \ref{l:expression-for-Q1}
\[ K_f(v,v') = \frac{2^{N-1}}{|v'-v|} \int_{\{w: w \cdot (v'-v) = 0\}} f(v+w) r^{-N+2+\gamma} b(\cos \theta) \dd w.\]

Recall that we have $|w| = r \cos(\theta/2)$ and $|v'-v| = r \sin(\theta/2)$.

We analyse two cases: when $\cos(\theta) > 0$ and when $\cos(\theta)<0$. 

If $\cos(\theta)>0$, then $|w| \approx r$ and $b(\cos \theta) \approx \sin(\theta)^{-N+1-\nu} \approx \sin(\theta/2)^{-N+1-\nu}$. If $\cos(\theta)<0$, then $|v'-v| \approx r$, $|w| = r \cos(\theta/2)$, and using \eqref{e:nice-other-side}, $b(\cos \theta) \approx \sin(\theta)^{\gamma+\nu+1} \approx \cos(\theta/2)^{\gamma+\nu+1}$. Either way, we have
\[ r^{-N+2+\gamma} b(\cos \theta) \approx |w|^{\gamma+\nu+1} |v'-v|^{-N+1-\nu}.\]
And therefore,
\begin{align*}
K_f(v,v')
& \approx \left( \int_{\left\{w \cdot (v'-v) = 0 \right \}} f(v+w)\ |w|^{\gamma + \nu + 1}  \dd w \right) |v'-v|^{-N-\nu}.
\end{align*}
\end{proof}

Note that $K_f$ is a symmetric kernel in the sense that $K_f(v,v+y)=K_f(v,v-y)$. 
The values of $K_f$, and in particular whether it is bounded above or below for the different values of $v$ and $v'$ will depend on $f$. We see that $K_f(t,v,v')$ will be positive when $f(t,v+w)>0$ for some values of $w$ on the hyperplane $\{w \cdot (v'-v) = 0\}$. 

It is now not surprising that typically $K_f \approx |v'-v|^{-N-\nu}$.
We would be able prove this precise bound for $K_f$ if we knew that, for example, $f$ is bounded in between two Maxwellians. With this pointwise estimates for $K_f$, we would be able to apply the theory from \cite{luislecturenotes}, section 3.2. However, we do not want to make such strong assumptions on $f$. The only bounds we are willing to assume on $f$ concern the values of its mass, energy and entropy.

The following lemma will give us the upper bound \eqref{e:K-bound-above} for $K_f$.

\begin{lemma} \label{l:K-above}
Let $f : \R^n \to \R$ and $R>0$. Assume that
\begin{align*}
\sup_{v \in B_R} \int_{\R^N} |w|^{\gamma+\nu} f(v+w) \dd v &\leq K_0
\end{align*}
Then, there exists a constant $\Lambda$ such that
\[ \int_{B_{2r} \setminus B_r} K_f(v,v+z) \dd z \leq \Lambda r^{-\nu}.\]
for all $r>0$ and $v \in B_R$.

The value of $\Lambda$ depends only on $K_0$ and the cross section $B$.
\end{lemma}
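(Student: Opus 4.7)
The plan is to substitute the pointwise bound from Corollary \ref{c:Kf-approx} into the integral and convert $z$ to polar coordinates $z=\rho\sigma$ with $\rho\in(r,2r)$ and $\sigma\in\partial B_1$. The factor $|z|^{-N-\nu}$ combines with the Jacobian $\rho^{N-1}$ to give $\rho^{-1-\nu}$, whose radial integration over $(r,2r)$ instantly contributes $r^{-\nu}$. What remains is the angular quantity
\[ I := \int_{\partial B_1} \int_{\sigma^\perp} f(v+w)\,|w|^{\gamma+\nu+1}\dd w\,\dd \sigma,\]
and the task reduces to showing that $I$ is controlled by the one-sided moment $\int_{\R^N}|w|^{\gamma+\nu}f(v+w)\dd w\le K_0$.

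The decisive step is to introduce polar coordinates on each hyperplane $\sigma^\perp$, writing $w=s\tau$ with $s>0$ and $\tau\in\partial B_1\cap\sigma^\perp$, so that $|w|^{\gamma+\nu+1}\dd w=s^{\gamma+\nu+N-1}\dd s\,\dd\tau$. This turns $I$ into an integral over the incidence set $\{(\sigma,\tau)\in\partial B_1\times\partial B_1:\sigma\perp\tau\}$. Because this constraint is symmetric in $\sigma$ and $\tau$ and the measures on both spheres are rotation-invariant, swapping the roles of $\sigma$ and $\tau$ yields the disintegration identity
\[\int_{\partial B_1} \int_{\partial B_1\cap\sigma^\perp}\phi(\tau)\dd \tau\,\dd\sigma \;=\; c_N \int_{\partial B_1}\phi(\tau)\dd\tau,\]
where $c_N$ is a purely dimensional constant (the total measure of an equatorial $(N-2)$-sphere). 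Applying this with $\phi(\tau)=f(v+s\tau)$ and converting back to Cartesian coordinates in $w\in\R^N$, one obtains $I\le c_N\int_{\R^N}|w|^{\gamma+\nu}f(v+w)\dd w\le c_N K_0$. Multiplying by the radial factor $r^{-\nu}$ computed earlier, and absorbing the implicit constants from Corollary \ref{c:Kf-approx}, gives the claim with $\Lambda$ proportional to $K_0$.

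The only nontrivial point is the disintegration identity involving the constraint $\sigma\perp\tau$: this is not a genuine Fubini theorem on $\partial B_1\times\partial B_1$ (the constraint has measure zero there), but rather expresses the equality of two iterated integrals along the fibers of the two projections from the incidence submanifold. It can be verified either by invariance and a one-line computation in an adapted basis, or by parametrizing $\sigma$ using spherical coordinates with $\tau$ as a pole. Once this identity is in hand, the proof is mechanical, and the constant $\Lambda$ depends only on the cross section $B$ (through the implicit constant in Corollary \ref{c:Kf-approx}), on $N$, and linearly on $K_0$.
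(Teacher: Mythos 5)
Your argument is correct and is essentially the paper's own proof: bound $K_f$ via Corollary~\ref{c:Kf-approx}, integrate in polar coordinates in $z$ to harvest the $r^{-\nu}$ factor, and reduce the angular integral of the hyperplane integrals to the $\gamma+\nu$ moment of $f$. The only cosmetic difference is that the paper cites Proposition~\ref{p:dual-polar} for the identity $\int_{\partial B_1}\int_{\sigma^\perp} g(w)\,\dd S(w)\,\dd\sigma = c_N\int_{\R^N} g(z)\,|z|^{-1}\dd z$, whereas you re-derive the equivalent spherical disintegration identity from symmetry of the incidence set $\{\sigma\perp\tau\}$; the underlying computation is the same.
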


\begin{proof}
Using Corollary \ref{c:Kf-approx}, polar coordinates, and Proposition \ref{p:dual-polar}, we estimate
\begin{align*}
\int_{B_{2r} \setminus B_r} K_f(v,v+z) \dd z 
&\leq C \left( \int_r^{2r} \int_{z \in \partial B_1} \left( \int_{\left\{ w: w \cdot z = 0 \right \}} f(v+w)\ |w|^{\gamma + \nu + 1}  \dd w \right) \rho^{N-1} \dd S(z) \dd \rho \right) r^{-N-\nu}, \\
&= C \int_{z \in \partial B_1} \left( \int_{\left\{ w: w \cdot z = 0 \right \}} f(v+w)\ |w|^{\gamma + \nu + 1}  \dd w \right) \dd S(z) \ r^{-\nu}, \\
&= C \left(\int_{\R^N} f(v+w) |w|^{\gamma+\nu} \dd w \right) r^{-\nu}\\
&\leq \Lambda r^{-\nu}
\end{align*}
\end{proof}

\begin{cor}
Let $0 \leq \gamma+\nu$, $R>0$ and $f: \R^N \to \R$ non negative such that
\begin{align*}
\int_{\R^N} f(v) \dd v &\leq M_0, \\
\int_{\R^N} |v|^{\gamma+\nu} f(v) \dd v &\leq K_0
\end{align*}
Then, there is a constant $\Lambda$ such that
\[ \int_{B_{2r} \setminus B_r} K_f(v,v+z) \dd z \leq \Lambda r^{-\nu}.\]
for all $r>0$ and $v \in B_R$.
\end{cor}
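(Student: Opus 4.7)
The plan is to reduce this corollary to the previous Lemma \ref{l:K-above} by showing that the hypothesis of that lemma, namely a uniform bound on $\int |w|^{\gamma+\nu} f(v+w)\,\mathrm{d}w$ for $v\in B_R$, follows from the stated mass bound plus the $(\gamma+\nu)$-moment bound, under the assumption $\gamma+\nu\geq 0$.

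Concretely, I would perform the translation $u = v + w$, so that
\[
\int_{\R^N} |w|^{\gamma+\nu} f(v+w)\,\mathrm{d}w = \int_{\R^N} |u-v|^{\gamma+\nu} f(u)\,\mathrm{d}u.
\]
Since $\gamma+\nu\geq 0$, the function $t\mapsto t^{\gamma+\nu}$ is subadditive up to a multiplicative constant, so there is a constant $C=C(\gamma+\nu)$ with
\[
|u-v|^{\gamma+\nu} \leq C\bigl(|u|^{\gamma+\nu} + |v|^{\gamma+\nu}\bigr).
\]
For $v\in B_R$, the second term is bounded by $C R^{\gamma+\nu}$, hence
\[
\int_{\R^N} |u-v|^{\gamma+\nu} f(u)\,\mathrm{d}u \leq C\int_{\R^N} |u|^{\gamma+\nu} f(u)\,\mathrm{d}u + C R^{\gamma+\nu}\int_{\R^N} f(u)\,\mathrm{d}u \leq C(K_0 + R^{\gamma+\nu} M_0).
\]
This provides the constant $\tilde K_0 := C(K_0 + R^{\gamma+\nu} M_0)$ required as the hypothesis of Lemma \ref{l:K-above}.

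Having done that, the conclusion
\[
\int_{B_{2r}\setminus B_r} K_f(v,v+z)\,\mathrm{d}z \leq \Lambda r^{-\nu}
\]
is immediate by direct application of Lemma \ref{l:K-above} to $f$ with $\tilde K_0$ in place of $K_0$, yielding a constant $\Lambda$ depending only on $M_0$, $K_0$, $R$ and the cross section $B$. There is no real obstacle here: the only point to be careful about is the subadditivity inequality, which is a standard consequence of $\gamma+\nu\geq 0$ (the case $\gamma+\nu = 0$ is trivial, and for $\gamma+\nu>0$ it follows from the triangle inequality $|u-v|\leq |u|+|v|$ combined with the elementary inequality $(a+b)^p \leq 2^{p-1}(a^p+b^p)$ for $p\geq 1$, or $(a+b)^p\leq a^p+b^p$ for $p\in(0,1]$).
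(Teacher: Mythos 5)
Your proof is correct and follows essentially the same route as the paper: both reduce the corollary to Lemma \ref{l:K-above} by bounding $\int |w|^{\gamma+\nu} f(v+w)\,\dd w$ for $v\in B_R$ using the mass and $(\gamma+\nu)$-moment. The paper simply asserts this bound as a one-line consequence, whereas you spell out the elementary inequality $|u-v|^{\gamma+\nu}\leq C(|u|^{\gamma+\nu}+|v|^{\gamma+\nu})$ that justifies it.
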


\begin{proof}
Since $f$ has a finite $\gamma+\nu$-moment, then, for all $v \in B_R$
\[ \int_{\R^N} |w|^{\gamma+\nu} f(v+w) \dd w \leq K(M_0,K_0,R),\]
and Lemma \ref{l:K-above} applies.
\end{proof}

\begin{remark}
If $\gamma+\nu < 2$, then we can bound the $\gamma+\nu$-moment in the previous corollary in terms of $M_0$ and an upper bound for the second moment $E_0$.
\end{remark}

In order to obtain the bound below for $K_f$ given in \eqref{e:K-bound-below}, we need to prove the following lemma first.

\begin{lemma} \label{l:lifted-set}
Let $f :\R^N \to R$ be non negative and
\begin{align*}
M_1 \leq \int_{\R^N} f(v) \dd v &\leq M_0, \\
\int_{\R^N} |v|^2 f(v) \dd v &\leq E_0, \\
\int_{\R^N} f(v) \ \log f(v) \dd v &\leq H_0.
\end{align*}
There exists an $r>0$, $\ell>0$ and $m>0$ depending on $M_0$, $M_1$, $E_0$ and $H_0$ such that
\[ |\{v : f(v)>\ell\} \cap B_r| \geq m \]
\end{lemma}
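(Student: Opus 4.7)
The plan is to localize a non-trivial fraction of the mass of $f$ inside a large ball, then use the entropy bound to prevent that localized mass from hiding on arbitrarily thin superlevel sets. The argument has three stages, and the main difficulty is extracting a quantitative lower bound on $\int f \log f$ from the energy bound.

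First, Chebyshev's inequality applied to the energy gives $\int_{|v| \geq r} f \dd v \leq E_0/r^2$, so choosing $r$ depending only on $E_0$ and $M_1$ yields $\int_{B_r} f \dd v \geq M_1/2$. Next, I would establish the quantitative lower bound $\int f \log f \dd v \geq -E_0 - \pi^{N/2}$ by comparing $f$ to the Gaussian $g(v) = e^{-|v|^2}$. The pointwise inequality $f \log f \geq -f|v|^2 + f - e^{-|v|^2}$ is just $x \log x \geq x - 1$ applied to $x = f(v)/g(v)$ and multiplied by $g$; integration against the energy produces the stated bound. The same computation performed on $\{f \leq L\}$ only shows that $\int_{\{f \leq L\}} f \log f \dd v \geq -E_0 - \pi^{N/2}$ as well, so combined with the entropy hypothesis $\int f \log f \dd v \leq H_0$ one gets $\int_{\{f > L\}} f \log f \dd v \leq H_0 + E_0 + \pi^{N/2}$ for every $L > 1$. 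On $\{f > L\}$ we have $\log f > \log L > 0$, hence $f \leq (f \log f)/\log L$, which yields
\[ \int_{\{f > L\}} f \dd v \leq \frac{H_0 + E_0 + \pi^{N/2}}{\log L}. \]
Choosing $L$ large depending on $M_1$, $E_0$, $H_0$, $N$ makes the right hand side $\leq M_1/4$, and combined with step one, $\int_{B_r \cap \{f \leq L\}} f \dd v \geq M_1/4$.

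Finally, I split $B_r \cap \{f \leq L\}$ according to a threshold $\ell$: on $\{f \leq \ell\}$ the mass contribution is at most $\ell |B_r|$, and on $\{\ell < f \leq L\}$ it is at most $L |B_r \cap \{f > \ell\}|$. Choosing $\ell$ so that $\ell |B_r| \leq M_1/8$ forces $|B_r \cap \{f > \ell\}| \geq M_1/(8L) =: m$, which is the desired conclusion. The main obstacle is producing the lower bound on $\int f \log f$ in terms of $E_0$; without it the entropy upper bound alone would be compatible with a large negative contribution to the entropy canceling an arbitrarily large concentration of $f$ on a thin superlevel set, which would destroy the argument in step two.
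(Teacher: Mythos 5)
Your proof is correct and follows essentially the same route as the paper: energy plus Chebyshev localizes a fixed fraction of the mass in a ball $B_r$, and the entropy bound (via the classical lower bound $\int f\log f \geq -E_0-\pi^{N/2}$ from comparison with a Gaussian, which you work out explicitly and the paper merely cites) prevents that localized mass from concentrating at large height, forcing $\{f>\ell\}\cap B_r$ to have positive measure. The only difference is bookkeeping: the paper fixes $\ell$ first, then defines the cutoff level $T$ implicitly by $T\,|B_r\cap\{f>\ell\}| = M_1/8$ and bounds $T$ via the entropy, whereas you fix an explicit level $L$ from the entropy bound first and then choose $\ell$.
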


\begin{proof}
It is classical that, depending of $M_0$, $E_0$ and $H_0$, there is a constant $\tilde H_0$ such that
\[ \int_{\R^N} f(v) |\log f(v)| \dd v \leq \tilde H_0.\]

Since
\[ E_0 \geq \int_{\R^N} |v|^2 f(v) \dd v,\]
then, for all $r>0$, we clearly have
\[ E_0 \geq r^2 \int_{\R^N \setminus B_r} f(v) \dd v.\]
Let us choose $r>0$ large enough such that $E_0/r^2 < M_1/2$. Therefore
\[ \int_{\R^N \setminus B_r} f(v) \dd v \leq \frac 12 M_1 \leq \frac 12 \int_{\R^N} f(v) \dd v.\]
Therefore, 
\[ \int_{B_r} f(v) \dd v \geq \frac 12 M_1.\]
Let $\ell$ be such that $\ell |B_r| < \frac 14 M_1$. Thus
\[ \int_{B_r \cap \{f > \ell\}} f(v) \dd v \geq \frac 14 M_1.\]
We want to argue now that $|B_r \cap \{f > \ell\}| \geq m$ for some constant $m>0$. Indeed, let $T>0$ be such that $T |B_r \cap \{f > \ell\}| = \frac 18 M_1$. Then, on one hand, we have
\[ \int_{B_r \cap \{f > T\}} f(v) \dd v \geq
\frac 14 M_1 - \int_{B_r \cap \{T \geq f > \ell \}} f(v) \dd v \geq
 \frac 18 M_1.\]
On the other hand,
\[ \tilde H_0 \geq \int_{B_r \cap \{f > T\}} f(v) \log f(v) \dd v \geq \frac 18 M_1 \log (T).\]
Therefore, $T \leq \exp(8 H_0 / M_1)$. In particular,
\[|B_r \cap \{f > \ell\}| = \frac {M_1} {8T} \geq \frac{M_1}{8} \exp(-8 \tilde H_0 / M_1).\]
\end{proof}

\begin{remark}
In the previous lemma $r$ and $\ell$ depend on $M_1$ and $E_0$, whereas $m$ depends on $M_1$ and $H_0$.
\end{remark}

The following lemma gives us \eqref{e:K-bound-below} for $K_f$, where $K_f$ is the one from Lemma \ref{l:expression-for-Q1}.

\begin{lemma} \label{l:K-below}
Let $f: \R^N \to R$ be a non negative function such that
\begin{align*}
M_1 \leq \int_{\R^N} f(v) \dd v &\leq M_0, \\
\int_{\R^N} |v|^2 f(v) \dd v &\leq E_0, \\
\int_{\R^N} f(v) \ \log f(v) \dd v &\leq H_0.
\end{align*}
Then, for any $R>0$, there exists constants $\lambda$ and $\mu$ (depending on $M_0$, $M_1$, $E_0$, $H_0$ and $R$) such that for all $v \in B_R$, there exists a symmetric subset of the unit sphere $A = A(v) \subset \partial B_1$ such that
\begin{itemize}
\item $|A| > \mu$, where $|A|$ stands for the $(N-1)$-Haussdorff measure.
\item $K_f(v,v') \geq \lambda |v'-v|^{-N-\nu}$, every time $(v'-v)/|v'-v| \in A$.
\end{itemize}
\end{lemma}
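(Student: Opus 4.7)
The plan is to combine Lemma \ref{l:lifted-set}, Corollary \ref{c:Kf-approx}, and the spherical averaging identity used in the proof of Lemma \ref{l:K-above} in a distribution function argument.

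First, I apply Lemma \ref{l:lifted-set} to obtain $r_0, \ell_0, m_0 > 0$ (depending only on $M_0, M_1, E_0, H_0$) and a set $\mathcal{E} \subset B_{r_0}$ with $|\mathcal{E}| \geq m_0$ on which $f \geq \ell_0$. For a fixed $v \in B_R$ I set $F_v := \mathcal{E} - v$, so $F_v \subset B_{r_0+R}$, $|F_v| \geq m_0$, and $f(v+w) \geq \ell_0$ for every $w \in F_v$. Corollary \ref{c:Kf-approx} reduces the desired bound to a pointwise lower bound on
\[ J(\sigma) := \int_{\{w \cdot \sigma = 0\}} f(v+w) |w|^{\gamma+\nu+1} \dd w \;\geq\; \ell_0 \,\tilde J(\sigma), \quad \tilde J(\sigma) := \int_{\sigma^\perp \cap F_v} |w|^{\gamma+\nu+1} \dd w. \]
It therefore suffices to show that $A := \{\sigma \in \partial B_1 : \tilde J(\sigma) \geq \lambda_0\}$ has surface measure at least $\mu$ for appropriate constants $\lambda_0, \mu > 0$. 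Symmetry of $A$ is automatic since $(-\sigma)^\perp = \sigma^\perp$, and then Corollary \ref{c:Kf-approx} yields $K_f(v,v') \geq \lambda |v'-v|^{-N-\nu}$ with $\lambda := c\,\ell_0 \lambda_0$ whenever $(v'-v)/|v'-v| \in A$.

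For the lower bound on the average, the spherical identity used in the proof of Lemma \ref{l:K-above} (Proposition \ref{p:dual-polar}) gives
\[ \int_{\partial B_1} \tilde J(\sigma) \dd \sigma \;=\; C_N \int_{F_v} |w|^{\gamma+\nu} \dd w. \]
I bound the right side below by a short case split. If $\gamma+\nu \geq 0$, pick $\rho_0>0$ so that $|B_{\rho_0}| \leq m_0/2$; then $|F_v \setminus B_{\rho_0}| \geq m_0/2$ and $|w|^{\gamma+\nu} \geq \rho_0^{\gamma+\nu}$ there, giving at least $\tfrac{m_0}{2}\rho_0^{\gamma+\nu}$. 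If $\gamma+\nu < 0$, I use $|w| \leq r_0+R$ on all of $F_v$ to get $|w|^{\gamma+\nu} \geq (r_0+R)^{\gamma+\nu}$ and at least $m_0 (r_0+R)^{\gamma+\nu}$. Either way the bound $c_0 = c_0(m_0, r_0, R, \gamma, \nu, N) > 0$ is uniform in $v \in B_R$.

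For the matching pointwise upper bound I use $F_v \cap \sigma^\perp \subset B_{r_0+R} \cap \sigma^\perp$:
\[ \tilde J(\sigma) \leq \int_{B_{r_0+R} \cap \sigma^\perp} |w|^{\gamma+\nu+1} \dd w \;=\; \frac{|S^{N-2}|\,(r_0+R)^{\gamma+\nu+N}}{\gamma+\nu+N} \;=:\; C_1, \]
which is finite because $\gamma > -N$ and $\nu > 0$ force $\gamma+\nu+N > 0$. Combining $\|\tilde J\|_{L^1(\partial B_1)} \geq C_N c_0$ with $\|\tilde J\|_{L^\infty(\partial B_1)} \leq C_1$ via a Chebyshev-type argument (take $\lambda_0 := C_N c_0 / (2|\partial B_1|)$ and deduce $|A| \geq C_N c_0 / (2 C_1)$ from $C_N c_0 \leq \lambda_0 |\partial B_1| + C_1 |A|$) yields the required lower bound $\mu$ on $|A|$. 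The only nontrivial step is the $v$-uniform lower bound on $\int_{F_v} |w|^{\gamma+\nu} \dd w$ when $\gamma+\nu > 0$, where one has to prevent $F_v$ from concentrating at the origin; this is handled by the small-ball exclusion above. Everything else is a straightforward distribution function argument once the identity from Proposition \ref{p:dual-polar} is in place.
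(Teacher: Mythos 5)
Your proof is correct and follows the paper's approach exactly: replace $f$ by $\ell_0\one_{\mathcal{E}}$ via Lemma \ref{l:lifted-set}, use Corollary \ref{c:Kf-approx} together with Proposition \ref{p:dual-polar} to bound the spherical average of $\tilde J$ from below, pair this with a uniform pointwise upper bound on $\tilde J$, and extract $A$ by a Chebyshev-type argument. You fill in two details the paper leaves implicit (the sign-of-$(\gamma+\nu)$ case split for the lower bound on $\int_{F_v}|w|^{\gamma+\nu}\dd w$, and the explicit Chebyshev step), and your $(N-1)$-dimensional polar-coordinate upper bound on $\tilde J$, finite because $\gamma+\nu+N>0$, is in fact cleaner than the paper's stated bound $(r+R)^{\gamma+\nu+1}|B_r|$, which tacitly assumes $\gamma+\nu+1\geq 0$.
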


\begin{proof}
From Lemma \ref{l:lifted-set}, there exists a constant $\ell > 0$, $m>0$ and $r>0$ such that 
\[ |\{v : f(v)>\ell\} \cap B_r| \geq m. \]
Let $S = \{v : f(v)>\ell\} \cap B_r$. We trivially have $f(v) \geq \ell \one_S(v)$. In particular
\[ K_f(v,v') \geq \ell K_{\one_S}(v,v').\]
In order to do this proof, we will prove the lower bound for the kernel $K_{\one_S}(v,v')$ instead.

Recall
\[ K_{\one_S}(v,v') \geq c \ell \left( \int_{\left\{w \cdot (v'-v) = 0 \right \}} \one_S(v+w)\ |w|^{\gamma + \nu + 1}  \dd w \right) |v'-v|^{-N-\nu}. \]

Note that $S \subset B_r$. The integrand in the right hand side can be non zero only when $v+w \in B_r$. In particular, for every fixed $v$, $|w|$ remains bounded and the whole integral will be bounded by a constant depending only on $M_0$, $M_1$, $E_0$ and $H_0$.
\begin{equation} \label{e:M}  \left( \int_{\left\{ w: w \cdot z = 0 \right \}} \one_S(v+w)\ |w|^{\gamma + \nu + 1}  \dd w \right) \leq (r+R)^{\gamma+\nu+1} |B_r| \leq C.
\end{equation}

Note that the integral in the right hand side depends on $v$ and $(v'-v)/|v'-v|$ only. Using Proposition \ref{p:dual-polar},
\[ \int_{\sigma \in \partial B_1} \left( \int_{\left\{ w: w \cdot \sigma = 0 \right \}} \one_S(v+w)\ |w|^{\gamma + \nu + 1}  \dd w \right) \dd \sigma = \int_{\R^N} \one_S(z) |z-v|^{\gamma+\nu} \dd z = \int_{S} |z-v|^{\gamma+\nu} \dd z.\]
Since $|S| \geq m$, then there is some small constant $\kappa$ so that
\begin{equation} \label{e:kappa}  \int_{\sigma \in \partial B_1} \left( \int_{\left\{ w: w \cdot \sigma = 0 \right \}} \one_S(v+w)\ |w|^{\gamma + \nu + 1}  \dd w \right) \dd \sigma \geq \kappa > 0.
\end{equation}
Combining \eqref{e:M} with \eqref{e:kappa}, we find that there is a set $A \subset \partial B_1$ of values of $\sigma$ so that the integrand is bounded below by a constant $\lambda$. That is
\[ \left( \int_{\left\{ w: w \cdot \sigma = 0 \right \}} \one_S(v+w)\ |w|^{\gamma + \nu + 1}  \dd w \right) \geq \lambda \qquad \text{if } \sigma \in A.\]
This concludes the proof.
\end{proof}

\section{The term $Q_2$}


In this section we analyse the term $Q_2$. This term is understood as a lower order term compared to $Q_1$. This point of view is not new since it follows the same philosophy as the cancellation lemma in \cite{alexandre2000entropy}. Indeed, Lemma \ref{l:expression-for-Q2} coincides with Lemma 1 in \cite{alexandre2000entropy}. We include the full proof here for completeness (and because it is short and easy anyway).

The term $Q_2$ is reduced to a simpler formula in the following lemma. The proof is an elementary change of variables. In 2D it is much easier to understand than in higher dimension. The proof we write below is general.

\begin{lemma} \label{l:expression-for-Q2} For any cross section $B$, we can rewrite the term $Q_2$ from \eqref{e:Q2} as
\begin{equation} \label{e:Q2prime} Q_2(f,g) = \left(\int_{\R^N} f(v-w) \tilde B(|w|) \dd w \right) g(v). 
\end{equation}
Where, 
\[ \tilde B(r) := \int_{\partial B_1} \left( 2^{N/2} (1-\sigma \cdot e)^{-N/2} B\left( \frac {\sqrt 2 r} {\sqrt{1-\sigma \cdot e}} , \theta \right) - B(r, \theta) \right) \dd \sigma > 0.\]
Here $e$ is any unit vector (the value of $\tilde B$ is independent of this choice) and $\cos \theta = \sigma \cdot e$.

\end{lemma}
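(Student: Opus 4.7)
The plan is to write $Q_2(f,g) = g(v)(I_1 - I_2)$, where
\[ I_1 = \int_{\R^N}\int_{\partial B_1} f(v'_\ast) B(r,\theta) \dd \sigma \dd v_\ast, \qquad I_2 = \int_{\R^N}\int_{\partial B_1} f(v_\ast) B(r,\theta) \dd \sigma \dd v_\ast, \]
and to reduce each one, via an explicit change of variables, to an expression of the form $\int_{\R^N} f(v-w) \tilde B_j(|w|) \dd w$. Since neither integral converges individually in the non cut-off regime, the computation should first be carried out under a temporary truncation (e.g.\ $b(\cos\theta) = 0$ for $\theta < \eps$), and the limit $\eps \downarrow 0$ then taken in the difference $\tilde B = \tilde B_1 - \tilde B_2$, where the singularity cancels.

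The $I_2$ piece is immediate: the translation $w = v_\ast - v$ yields
\[ I_2 = \int_{\R^N} f(v+w)\, \tilde B_2(|w|) \dd w, \qquad \tilde B_2(\rho) = \int_{\partial B_1} B(\rho, \theta) \dd \sigma, \]
with $\cos\theta = \sigma \cdot e$ for any fixed unit vector $e$, since rotational invariance makes the $\sigma$-integral depend only on $\rho$.

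For $I_1$, I would fix $\sigma$ and change variable $v_\ast \mapsto v'_\ast$. Writing $u = v_\ast - v$ so that $v'_\ast - v = \tfrac12(u - |u|\sigma)$, the Jacobian matrix equals $\tfrac12(I - \sigma \otimes \hat u)$ with $\hat u = u/|u|$; the matrix determinant lemma gives determinant $(1 - \cos\theta)/2^N$. The image of the map is the half-space $\{w : w \cdot \sigma \leq 0\}$, and solving $2w = u - |u|\sigma$ shows that on this image $\cos\theta = 1 - 2(\sigma \cdot \hat w)^2$ and $r = \sqrt{2}\,|w|/\sqrt{1-\cos\theta}$. Setting $w = v'_\ast - v$ and swapping the order of integration, $I_1$ takes the form $\int f(v+w)\, \tilde B_1(|w|) \dd w$, where $\tilde B_1(\rho)$ is a $\sigma$-integral over a hemisphere. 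To bring it into the form stated in the lemma, substitute $\sigma \mapsto -\sigma$ (taking $e = \hat w$) and then perform the polar-angle substitution $s' = 1 - 2 s^2$ with $s = \sigma \cdot e$; this squaring trick doubles the polar range and bijects the hemisphere onto the full sphere. A routine spherical-coordinate computation gives $\dd \sigma = (2^{N-1} s^{N-2})^{-1} \dd \sigma'$, and substituting $s^2 = (1 - \sigma' \cdot e)/2$ produces exactly the factors $2^{N/2}(1-\sigma'\cdot e)^{-N/2}$ and $\sqrt{2}\rho/\sqrt{1-\sigma'\cdot e}$ that appear in the statement.

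Combining $I_1$ and $I_2$ and using that $\tilde B = \tilde B_1 - \tilde B_2$ depends only on $|w|$ (so $f(v+w)$ may be replaced by $f(v-w)$) gives the claimed identity. Positivity follows from factoring the integrand of $\tilde B(r)$ as $r^\gamma b(\cos\theta)\bigl[2^{(N+\gamma)/2}(1 - \sigma \cdot e)^{-(N+\gamma)/2} - 1\bigr]$, which is non-negative because $1 - \sigma \cdot e \leq 2$ and $N + \gamma > 0$. The main obstacle is the grazing-collision singularity at $\theta = 0$: the quantities $\tilde B_1(\rho)$ and $\tilde B_2(\rho)$ are separately infinite, and the content of the lemma lies precisely in the cancellation between them. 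Handling this cancellation rigorously via the cut-off approximation outlined above is the technical heart of the proof, and explains why this lemma coincides with the classical cancellation lemma of \cite{alexandre2000entropy}.
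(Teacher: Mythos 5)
Your proof is correct and reaches the same formula, but by a genuinely different change of variables. The paper goes through the appendix Lemma \ref{l:change-of-variables}, converting $\dd\sigma\,\dd v_\ast$ into an integral over $v'_\ast$ and a variable $w$ on the hyperplane $\{w\cdot v'_\ast = 0\}$, and then applies a normalized stereographic projection $\psi$ of $v'_\ast + w$ onto the sphere to produce the spherical integral in the statement. You instead keep $\sigma$ fixed and change variable $v_\ast \mapsto v'_\ast$ directly (Jacobian $(1-\cos\theta)/2^N$ by the matrix determinant lemma), note that the image is the half-space $\{w\cdot\sigma \le 0\}$, swap the order of integration so that the $\sigma$-integral runs over a hemisphere, and then pass to the full sphere via the reflection $\sigma \mapsto -\sigma$ followed by the polar substitution $s' = 1-2s^2$; the Jacobian $\dd\sigma = (2^{N-1}s^{N-2})^{-1}\dd\sigma'$ and the identity $s^2 = (1-\sigma'\cdot e)/2$ then yield precisely the prefactor $2^{N/2}(1-\sigma'\cdot e)^{-N/2}$ and the argument $\sqrt{2}\,\rho/\sqrt{1-\sigma'\cdot e}$ in the lemma. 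Your route avoids the intermediate hyperplane representation, and hence the separate change-of-variables lemma, and replaces the stereographic projection by an elementary polar substitution, which makes the derivation more self-contained; the paper's route amortizes the cost of Lemma \ref{l:change-of-variables} over its reuse in Lemma \ref{l:expression-for-Q1}. Your cut-off-and-pass-to-the-limit framework is the same ``standard approximation argument'' the paper invokes at the start, and your positivity argument via $\bigl(2/(1-\sigma\cdot e)\bigr)^{(N+\gamma)/2} - 1 \ge 0$ for $\gamma > -N$ is valid for cross sections of the form \eqref{e:B-assumption}.
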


\begin{proof}
By a standard approximation argument, we can assume that $B$ is integrable in both $r$ and $\theta$.

We divide the expression for $Q_2$ into two terms
\begin{align*} Q_2 &= \left( \int_{\R^N} \int_{\partial B_1} f(v_\ast') B(r,\theta) \dd \sigma \dd v_\ast - \int_{\R^N} f(v_\ast) \left( \int_{\partial B_1} B(r,\theta) \dd \sigma \right) \dd v_\ast \right) g(v).
\end{align*}
We leave the second term as it is and use Lemma \ref{l:change-of-variables} for the first term.

The first term inside the parenthesis becomes
\[ \int_{\R^N} \int_{\{w\cdot v'_\ast = 0\}} f(v'_\ast) B(r,\theta) \frac{2^{N-1}}{|v'_\ast| r^{N-2}} \dd w \dd v'_\ast.\]
Here $r^2 = |v'_\ast|^2 + |w|^2$ and $\sin (\theta/2) = |w|/|v_\ast'|$.

We define $\psi \in \partial B_1$ to be the normalized stereographic projection of $v'_\ast + w$ onto the sphere with diameter $v'_\ast$. Note that this $\psi$ is different from the original $\sigma$.
\[ \psi := 2|v'_\ast| \frac{v_\ast' + w}{|v_\ast' + w|^2} - \frac{v'_\ast}{|v'_\ast|}.\]
(note this is always a unit vector)
We verify the Jacobian of the (conformal) change of variables by a straight forward computation
\[ \dd \psi = \left( \frac{2^{N-1} |v_\ast'|^{N-1}}{r^{2N-2}} \right) \dd w.\]
Therefore, the first term becomes
\[ \int_{\R^N} f(v'_\ast) \left( \int_{\partial B_1}  B(r,\theta) \frac{r^N}{|v'_\ast|^N} \dd \psi \right) \dd v'_\ast.\]
Let $e = v'_\ast / |v'_\ast|$. We have $\cos \theta = e \cdot \psi$ and $r = |v'_\ast| / \cos(\theta/2) = |v'_\ast| / \sqrt{(1+\psi \cdot e)/2}$. We get the following expression for the inner integral
\[  \int_{\partial B_1}  B(r,\theta) \frac{r^N}{|v'_\ast|^N} \dd \psi = \int_{\partial B_1} B\left( \frac{\sqrt 2 |v'_\ast|} {\sqrt{1+\psi \cdot e}}, \cos^{-1} \left(\psi \cdot e\right) \right) \frac{2^{N/2}} {(1+\psi \cdot e)^{N/2}} \dd \psi.\]
Notice that this expression is independent of $e \in \partial B_1$. We finally rename the variables $v_\ast'$ and $\psi$ as $w$ and $\sigma$ and finish the proof with a straight forward replacement.
\end{proof}

\begin{lemma} \label{l:Btilde} Assuming that $B$ has the form \eqref{e:nice-other-side}, then $\tilde B(r) = Cr^\gamma$ for some positive constant $C$ depending on $\gamma$ and $b$.
\end{lemma}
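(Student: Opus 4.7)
The plan is a direct substitution. Plugging $B(r,\theta)=r^\gamma b(\cos\theta)$ into the formula for $\tilde B$ from Lemma \ref{l:expression-for-Q2}, the rescaled radial argument in the first term of the integrand contributes $r^\gamma$ times a power of $1-\sigma\cdot e$, and combining with the Jacobian prefactor $2^{N/2}(1-\sigma\cdot e)^{-N/2}$ and the unscaled second term one obtains
\[ \tilde B(r) = r^\gamma \int_{\partial B_1} \left[ 2^{(N+\gamma)/2}(1-\sigma\cdot e)^{-(N+\gamma)/2} - 1 \right] b(\cos\theta) \dd \sigma. \]
Since the bracketed integrand is $r$-independent, this already gives $\tilde B(r)=Cr^\gamma$, and the only remaining task is to show $0<C<\infty$.

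To check finiteness I pass to spherical coordinates centered on $e$, writing $\dd\sigma = \sin^{N-2}\theta\,\dd\theta\,\dd S^{N-2}$, and examine the two endpoints separately. At the endpoint where the prefactor $2^{(N+\gamma)/2}(1-\sigma\cdot e)^{-(N+\gamma)/2}$ stays bounded, Taylor expansion produces a cancellation of order $O(\theta^2)$ against the subtracted $1$; combined with the singular factor $b(\cos\theta)\approx\theta^{-(N-1)-\nu}$ and the measure density $\theta^{N-2}$, the integrand is of order $\theta^{1-\nu}$, integrable since $\nu<2$. At the other endpoint the prefactor blows up like $(\pi-\theta)^{-(N+\gamma)}$, but there assumption \eqref{e:nice-other-side} gives $b(\cos\theta)\approx(\pi-\theta)^{\gamma+\nu+1}$, so combined with the measure factor $(\pi-\theta)^{N-2}$ the integrand is of order $(\pi-\theta)^{\nu-1}$, integrable since $\nu>0$.

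For positivity, since $1-\sigma\cdot e\le 2$ on $\partial B_1$ and $N+\gamma>0$, the prefactor $2^{(N+\gamma)/2}(1-\sigma\cdot e)^{-(N+\gamma)/2}\ge 1$ pointwise, with strict inequality away from the single point $\sigma=-e$; since $b>0$ everywhere, $C>0$.

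The one genuinely delicate step is the order-$\theta^2$ cancellation at the endpoint carrying the grazing singularity of $b$: this is the quantitative form of the cancellation lemma of \cite{alexandre2000entropy}, built into the very definition of $\tilde B$ through the Jacobian of the conformal change of variables performed in the proof of Lemma \ref{l:expression-for-Q2}. Without it the integral would diverge at the grazing angle; everything else in the proof is routine power counting at the two endpoints.
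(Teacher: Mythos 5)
Your proof follows the paper's proof almost verbatim: substitute $B(r,\theta)=r^\gamma b(\cos\theta)$, factor out $r^\gamma$, and then verify by power counting at the two endpoints of the $\theta$-integral that the resulting angular integral is a finite positive constant. The two endpoint estimates you give (order $\theta^{1-\nu}$ near the grazing angle, integrable because $\nu<2$; order $(\pi-\theta)^{\nu-1}$ near $\theta=\pi$, integrable because $\nu>0$) are exactly the bounds the paper derives in its proof of Lemma~\ref{l:Btilde}. The explicit positivity argument (prefactor $\geq 1$ pointwise since $N+\gamma>0$, with strict inequality away from a single $\sigma$) is a small addition the paper leaves implicit, and it is correct.

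There is, however, a sign inconsistency you should fix. You write the formula as
$\tilde B(r) = r^\gamma \int_{\partial B_1}\bigl[2^{(N+\gamma)/2}(1-\sigma\cdot e)^{-(N+\gamma)/2}-1\bigr]b(\cos\theta)\dd\sigma$,
with $1-\sigma\cdot e$, evidently copied from the displayed statement of Lemma~\ref{l:expression-for-Q2}. But the derivation inside the proof of that lemma, and the paper's own equation \eqref{e:tildeB}, have $1+\sigma\cdot e$ (the Jacobian comes from $\cos(\theta/2)$, and $\cos^2(\theta/2)=(1+\sigma\cdot e)/2$); the displayed statement of Lemma~\ref{l:expression-for-Q2} appears to carry a typo. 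Taken literally, the $(1-\sigma\cdot e)$ version makes the prefactor \emph{blow up} at $\theta=0$, precisely where $b$ has its nonintegrable grazing singularity, and the integral would then diverge; there would be no $O(\theta^2)$ cancellation there. The power counting you actually carry out — cancellation $O(\theta^2)$ at $\theta=0$ against $b\approx\theta^{-(N-1)-\nu}$, and a $(\pi-\theta)^{-(N+\gamma)}$ blowup at $\theta=\pi$ against $b\approx(\pi-\theta)^{\gamma+\nu+1}$ — is correct, but it is the analysis for the $(1+\sigma\cdot e)$ formula, not the one you wrote; and in the positivity paragraph you then place the degeneracy at $\sigma=-e$, which is again consistent with $(1-\sigma\cdot e)$ rather than with your own endpoint analysis. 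Replace $1-\sigma\cdot e$ by $1+\sigma\cdot e$ throughout, and the degeneracy point by $\sigma=e$, and everything you wrote lines up with the paper's \eqref{e:tildeB} and becomes internally consistent.
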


\begin{proof}
In the case that $B(r,\theta)$ has the form \eqref{e:nice-other-side}. Applying Lemma \ref{l:expression-for-Q2} We get
\begin{align}
\tilde B(r) &= \int_{\partial B_1} \left( 2^{N/2} (1+\sigma \cdot e)^{-N/2} \left( \frac {\sqrt 2 r} {\sqrt{1+\sigma \cdot e}} \right)^\gamma  - r^\gamma \right) b(\cos \theta) \dd \sigma, \\
&= r^\gamma \int_{\partial B_1} \left( \frac{ 2^{{(N+\gamma)}/2} }{ (1+\sigma \cdot e)^{(N+\gamma)/2}} - 1 \right) b(\cos \theta) \dd \sigma. \label{e:tildeB}
\end{align}
As before, we assume that $b$ is a nice bounded function and extend the result to non integrable cross sections $B$ by approximation. This is a standard procedure, so we will just concentrate on the bounds.

The first factor is bounded except around $\sigma \approx -e$, whereas $b(\cos(\theta))$ is bounded except when $\sigma \approx e$ (i.e. $\theta \approx 0$). 

Let us analyse the integrand around both points. Starting with the latter,
\[ b(\cos \theta) \leq C |\theta|^{-N+1-\nu} \ \text{ and } \ \left( \frac{ 2^{{(N+\gamma)}/2} }{ (1+\sigma \cdot e)^{(N+\gamma)/2}} - 1 \right) \leq C |\theta|^2, \text{ for } \sigma \approx e.\]
Therefore, we bound the integrand by the integrable quantity
\[ \left( \frac{ 2^{{(N+\gamma)}/2} }{ (1+\sigma \cdot e)^{(N+\gamma)/2}} - 1 \right) b(\cos \theta) \leq C |\sin \theta|^{-N+3-\nu}.\]
The right hand side is integrable on the sphere $\partial B_1$ around $\sigma = e$ since $\nu < 2$.

We now analyse the integrand around the point $\sigma \cdot e \approx -1$. Using the assumption \eqref{e:nice-other-side},
\[ b(\cos \theta) \leq C |\sin \theta|^{\gamma+\nu+1} \ \text{ and } \ \left( \frac{ 2^{{(N+\gamma)}/2} }{ (1+\sigma \cdot e)^{(N+\gamma)/2}} - 1 \right) \leq C |\sin \theta|^{-N-\gamma}, \text{ for } \sigma \approx -e.\]
Therefore, we bound the integrand by the quantity
\[ \left( \frac{ 2^{{(N+\gamma)}/2} }{ (1+\sigma \cdot e)^{(N+\gamma)/2}} - 1 \right) b(\cos \theta) \leq C |\sin \theta|^{-N+1+\nu}.\]
This bound is integrable on $\partial B_1$ around $\sigma = -e$ because $\nu > 0$.

Therefore, the integral in \eqref{e:tildeB} gives us a well defined positive number. We obtain that $\tilde B(r) = C r^\gamma$ for some constant $C$ depending on $b$ and $\gamma$.
\end{proof}

In the case $B(r,\theta) = r^\gamma b(\theta)$, then $\tilde B(r) = \lambda r^\gamma$. The factor $\lambda$ depends on $\gamma$ and converges to zero as $\gamma \to -N$. Note that this does not mean that $Q_2 \to 0$ as $\gamma \to -N$ because $r^{-N}$ is not integrable in $\R^N$. 

Since $\tilde B$ is locally integrable and $Q_2$ has the form \eqref{e:Q2prime}, we abuse notation by writing 
\[Q_2(f,g) (v) = \left(f \ast \tilde B \right) (v) g(v),\]
where we understand $\tilde B(x) = \tilde B(|x|)$ to be a radially symmetric function in $\R^N$.


\begin{remark} When $\gamma < 0$, we cannot get an immediate upper bound of $\tilde B \ast f$ in terms of the macroscopic values of $f$. Indeed, we need a higher integrability condition so that $\tilde B \ast f$ is bounded. For example, if $f \in L^\infty \cap L^1$, we could conclude.
\[ |\tilde B \ast f| \leq C |f|_{L^\infty}^{-\gamma/N} |f|_{L^1}^{1+\gamma/N}.\]
\end{remark}

\section{Lower bound}

\begin{thm} \label{t:lower-bound} Let $f$ be a solution of \eqref{e:space-homogeneous-boltzmann} and $R>0$. Assume
\begin{align*}
\int_{\R^N} f(0,v) \log f(0,v) \dd v &\leq H_0, \\
\int_{\R^N} |v|^2 f(0,v) \dd v &\leq E_0, \\
0< M_1 \leq \int_{\R^N} f(0,v) \dd v &\leq M_0, \\
\sup_{(t,v) \in [0,T]\times B_{2R}} \int_{\R^N} |w|^{\gamma+\nu} f(t,v+w) \dd w &\leq K_0
\end{align*}
then there exists a constant $c = c(R,T,H_0, M_0, M_1, E_0, K_0)$ such that
\[ f(T,v) \geq c,\]
for all $v \in B_R$.
\end{thm}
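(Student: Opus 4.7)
The plan is to realize $f$ as a supersolution of a linear integro-differential equation with kernel $K_f$ and apply the weak Harnack inequality of Theorem \ref{t:weak-harnack}. Decompose $f_t = Q_1(f,f) + Q_2(f,f)$ as in Section \ref{s:twoterms}. Lemma \ref{l:expression-for-Q1} writes the first term in the required integro-differential form, while Lemmas \ref{l:expression-for-Q2} and \ref{l:Btilde} give $Q_2(f,f)(v) = C\,(|\cdot|^\gamma \ast f)(v)\,f(v)$. Since $f \geq 0$ and $C > 0$, we have $Q_2(f,f) \geq 0$, hence
\[ f_t(t,v) - \int_{\R^N}\bigl(f(t,v')-f(t,v)\bigr) K_f(t,v,v') \dd v' = Q_2(f,f)(v) \geq 0. \]

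Next, I would verify that the kernel $K_f$ satisfies the hypotheses \eqref{e:K-symmetric}, \eqref{e:K-bound-above}, \eqref{e:K-bound-below} uniformly in $t \in [0, T]$. Symmetry follows from the formula in Corollary \ref{c:Kf-approx}. The average upper bound comes from Lemma \ref{l:K-above}, invoking the hypothesis $K_0$ (which is automatic from $M_0$ and $E_0$ when $\gamma+\nu \in [0,2]$). The cone lower bound comes from Lemma \ref{l:K-below}, whose assumptions on mass, energy, and entropy hold uniformly in time because mass and energy are conserved and entropy is non-increasing along the Boltzmann flow.

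To furnish the positive spacetime set required by Theorem \ref{t:weak-harnack}, I would apply Lemma \ref{l:lifted-set} slice-by-slice. For every $t \in [0, T/2]$ this yields $r_0, \ell_0, m_0 > 0$, depending only on the macroscopic quantities, such that $|\{v \in B_{r_0} : f(t,v)>\ell_0\}| \geq m_0$; integrating in $t$ gives
\[ \bigl|\{(t,v) \in [0, T/2] \times B_{r_0} : f(t,v) > \ell_0\}\bigr| \geq m_0 \, T/2. \]

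Finally, set $R_1 = 4 \max(R, r_0)$ and apply Theorem \ref{t:weak-harnack} on $[0, T] \times B_{R_1}$: the positivity set sits inside $[0, T/2] \times B_{R_1/4}$, so the theorem gives $f(T, v) \geq c \ell_0$ on $B_{R_1/4} \supset B_R$, with $c$ depending only on $T, R, N, M_0, M_1, E_0, H_0$, the cross section $B$, and (when relevant) $K_0$. The principal technical nuisance is ensuring the upper kernel bound from Lemma \ref{l:K-above} on the possibly enlarged ball $B_{R_1}$, which may exceed the ball $B_{2R}$ on which $K_0$ is hypothesized; this can be handled either by absorbing the enlargement into $K_0$ (whose supporting radius is controlled by $R$, $M_1$, $E_0$) or by iterating the weak Harnack on a sequence of concentric balls, using the pointwise lower bound from one step as the seed for the next application.
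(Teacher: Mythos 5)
Your proposal follows the same route as the paper: decompose $Q = Q_1 + Q_2$, discard $Q_2 \geq 0$ to view $f$ as a supersolution of the integro-differential equation with kernel $K_f$, verify the kernel hypotheses via Lemmas \ref{l:K-above} and \ref{l:K-below}, produce the seed positivity set via Lemma \ref{l:lifted-set}, and conclude with the weak Harnack inequality of Theorem \ref{t:weak-harnack}. The extra bookkeeping you supply about matching the radius of the Harnack ball against the ball on which the moment bound $K_0$ is assumed is a legitimate point that the paper's proof leaves implicit, and your suggested fix (iterating on concentric balls or enlarging $K_0$ accordingly) is the right way to close it; otherwise the argument coincides with the paper's.
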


Note that when $0 \leq \gamma+\nu \leq 2$, the value of $K_0$ is controlled by $M_0$ and $E_0$, and therefore the last inequality in the assumptions in redundant. If $\gamma+\nu > 2$, in particular $\gamma>0$, and thus there is instantaneous creation of infinite moments (see \cite{villani2002review}, chapter 2, section 2). Therefore, the last inequality, involving $K_0$, is redundant whenever $\gamma+\nu \geq 0$.

\begin{proof}
We re-write the equation \eqref{e:space-homogeneous-boltzmann} as
\[ f_t = Q_1(f,f) + Q_2(f,f).\]
From Lemma \ref{l:expression-for-Q2}, $Q_2(f,f) = f \cdot (\tilde B \ast f)$, with $\tilde B >0$. Thus, $Q_2 \geq 0$. We have
\[ f_t \geq Q_1(f,f).\]
Thus,
\[ f_t \geq Q_1(f,f) = \int_{\R^N} (f(t,v')-f(t,v)) K_f(t,v,v') \dd v'.\]
The kernel $K_f$ satisfies the hypothesis of Theorem \ref{t:weak-harnack} because of Lemma \ref{l:K-above} and Lemma \ref{l:K-below}.

Using Theorem \ref{t:weak-harnack} together with Lemma \ref{l:lifted-set}, we finish the proof.
\end{proof}

\begin{remark}
The value of $c$ necessarily decays as $R \to \infty$ because the result must be compatible with the Maxwellian solution. Therefore, the best we could expect to prove is that $c(R) \geq c_1 \exp(-C_2 R^2)$ for $c_1$ and $C_2$ depending on the other parameters. The explicit form of the constant $c$ in the theorem above is, in theory, computable following the proofs in \cite{schwab}. In practice, it is difficult to write its explicit form since these proofs are quite involved. There is no reason to believe that the method of this proof will provide an optimal lower bound.
\end{remark}

\section{Bounds in $L^\infty$}

In this section we obtain an a priori estimate in $L^\infty$ for the solution $f$ of the Boltzmann equation. We use maximum principle techniques taking advantage of the decomposition of the Boltzmann operator given in section \ref{s:twoterms}. This section does not apply results from \cite{schwab}. The proofs are self contained. The main technique has some similarity with \cite{gamba2009upper}. We state the result in general for the space in-homogeneous Boltzmann equation.

We need the following lemma, which is just a refined version of Lemma \ref{l:K-below}.

\begin{lemma} \label{l:K-below-v-large}
Let $f : \R^N \to \R$ be a non negative function such that
\begin{align*}
0 < M_1 \leq \int_{\R^N} f(v) \dd v &\leq M_0, \\
\int_{\R^N} |v|^2 f(v) \dd v &\leq E_0, \\
\int_{\R^N} f(v) \ \log f(v) \dd v &\leq H_0.
\end{align*}
Then, for any $v \in \R^N$, there exists a symmetric subset of the unit sphere $A = A(v) \subset \partial B_1$ such that
\begin{itemize}
\item $|A| \geq \mu / (1+|v|)$, where $|A|$ stands for the $N-1$-Haussdorff measure of $A$.
\item $K_f(v,v') \geq \lambda (1+|v|)^{1+\gamma+\nu} |v-v'|^{-N-\nu}$ every time $(v'-v)/|v'-v| \in A$.
\item For every $\sigma \in A$, $|\sigma \cdot v| \leq C$.
\end{itemize}
The constants $\mu$, $\lambda$ and $C$ depend on the cross section $B$ and the values of $M_0$, $M_1$, $E_0$ and $H_0$ only.
\end{lemma}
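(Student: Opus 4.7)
The plan is to refine the argument used for Lemma \ref{l:K-below}, tracking how the estimates depend on $|v|$ when $v$ is far from the origin. The case $|v|$ bounded (say $|v| \leq 2r$, where $r$ is produced by Lemma \ref{l:lifted-set}) reduces to the statement of Lemma \ref{l:K-below} itself, so the substance is the complementary regime $|v| \geq 2r$.

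As in the proof of Lemma \ref{l:K-below}, I would apply Lemma \ref{l:lifted-set} to obtain $r,\ell,m>0$ depending on $M_0,M_1,E_0,H_0$ so that the set $S:=\{f>\ell\}\cap B_r$ has $|S|\geq m$, and then use $f\geq \ell\,\one_S$ together with Corollary \ref{c:Kf-approx} to reduce to proving a lower bound for
\[
I(\sigma) := \int_{\{w\cdot\sigma=0\}} \one_S(v+w)\,|w|^{\gamma+\nu+1}\,\dd w,
\]
where $\sigma=(v'-v)/|v'-v|$. The key geometric observation is that the integrand is supported on $\{v+w : w\cdot\sigma=0\}$, which is the affine hyperplane at signed distance $\sigma\cdot v$ from the origin; hence $I(\sigma)=0$ unless $|\sigma\cdot v|\leq r$. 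The set $M_v := \{\sigma\in\partial B_1:|\sigma\cdot v|\leq r\}$ is a band of width $\approx r/|v|$ around the equator perpendicular to $v$, so it has $(N-1)$-measure $\leq C/(1+|v|)$. This already yields the desired bound $|\sigma\cdot v|\leq C$ on $A\subset M_v$.

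Next I would establish the two matching bounds on $I(\sigma)$. Since any $u=v+w\in S\subset B_r$ satisfies $|w|=|u-v|\in [|v|-r,|v|+r]$, and we are in the regime $|v|\geq 2r$, we get $|w|\approx 1+|v|$ regardless of the sign of $\gamma+\nu+1$, yielding the pointwise upper bound $I(\sigma)\leq C(1+|v|)^{\gamma+\nu+1}$. For the lower bound I would use the averaging identity from Proposition \ref{p:dual-polar},
\[
\int_{\partial B_1} I(\sigma)\,\dd\sigma = \int_S |z-v|^{\gamma+\nu}\,\dd z,
\]
and the same $|z-v|\approx 1+|v|$ estimate, to obtain $\int I\,\dd\sigma\geq c\,m\,(1+|v|)^{\gamma+\nu}$.

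The final step is a pigeonhole in the style of a reverse Chebyshev inequality. Setting $A_\lambda:=\{\sigma:I(\sigma)>\lambda(1+|v|)^{\gamma+\nu+1}\}\subset M_v$, the two bounds give
\[
c\,m\,(1+|v|)^{\gamma+\nu} \;\leq\; C(1+|v|)^{\gamma+\nu+1}\,|A_\lambda|\;+\;\lambda(1+|v|)^{\gamma+\nu+1}\,|M_v|.
\]
Since $|M_v|\leq C/(1+|v|)$, choosing $\lambda$ small enough that the second term is at most half the left side produces $|A_\lambda|\geq \mu/(1+|v|)$. Taking $A=A_\lambda$ works: it is automatically symmetric because the hyperplane defining $I$ depends only on $\sigma$ up to sign, so $I(-\sigma)=I(\sigma)$. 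Combining everything with Corollary \ref{c:Kf-approx} gives $K_f(v,v')\geq \lambda'(1+|v|)^{\gamma+\nu+1}|v'-v|^{-N-\nu}$ for $(v'-v)/|v'-v|\in A$, as required. The main technical obstacle I anticipate is simply bookkeeping the sign of the exponent $\gamma+\nu+1$ when bounding $|w|^{\gamma+\nu+1}$, but the restriction $|v|\geq 2r$ ensures $|w|\approx 1+|v|$ with constants depending only on $r$, so both directions of the estimate go through uniformly.
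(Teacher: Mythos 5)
Your proof is correct and follows essentially the same approach as the paper: reduce to the indicator $\one_S$ via Lemma \ref{l:lifted-set}, observe that $K_{\one_S}(v,v')$ is nonzero only when the hyperplane $\{w\cdot\sigma=0\}$ meets $S\subset B_r$ (forcing $|\sigma\cdot v|\leq r$), use $|w|\approx 1+|v|$ on $S$, and show the good set $A$ has measure $\gtrsim 1/(1+|v|)$. Your pigeonhole argument via the spherical averaging identity of Proposition \ref{p:dual-polar} is a welcome and fully rigorous substitute for the paper's informal statement that ``it is easy to observe'' the set $A$ covers a positive proportion of the band around the equator perpendicular to $v$.
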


Note that the condition $|\sigma \cdot v| \leq C$ means that $\sigma$ belongs to a band of width at most $C/|v|$ around the big circle on $\partial B_1$ which is perpendicular to $v$.

\begin{proof}
This is essentially the same result as Lemma \ref{l:K-below}. Indeed, the statement of Lemma \ref{l:K-below} already covers the result when $|v|$ is bounded by an arbitrary constant $R>0$. We are left with the analysis of the result for large values of $|v|$.

From Lemma \ref{l:lifted-set}, there is a constant $\ell > 0$, $m>0$ and $r>0$ such that
\[ |\{v : f(v) > \ell \} \cap B_r| \geq m.\]
Like in the proof of Lemma \ref{l:K-below}, we call $S = \{v : f(v) > \ell \} \cap B_r$ and observe that
\begin{equation} \label{e:klbv1}  K_f(v,v') \geq \ell K_{\one_S}(v,v') \geq c \ell \left( \int_{\{w \cdot (v'-v) = 0\}} \one_S(v+w) |w|^{\gamma+\nu+1} \dd w \right) |v'-v|^{-N-\nu}.
\end{equation}

For $|v|$ sufficiently large, we have that $|v|>r$ and $S \subset B_r$. In particular $v \notin S$. We need to identify the set of directions $\sigma$ for which, if $v'-v$ is parallel to $\sigma$ then $w \cdot (v'-v) = 0$ for a set of values of $w$ of positive measure such that $v+w \in S$. We know that the set $|S|>\mu$. We define $A(v) = \{ \sigma \in \partial B_1 :   |\{ w : w \cdot \sigma = 0\} \cap S| > \delta \}$.
 Therefore, for some $\delta>0$ and $c>0$, the measure of this $A$ is bounded by
\[ |A| = |\{ \sigma \in \partial B_1 :   |\{ w : w \cdot \sigma = 0\} \cap S| > \delta \} | > \frac{c \mu} {|v|}.\]
Indeed, it is easy to observe that for every big circle on $\partial B_1$ which goes through the point $v/|v|$, the set $A$ covers a positive proportion of the band of width $2r/|v|$ centered at the point perpendicular to $v/|v|$. In particular the last item of the lemma holds.
\medskip
\medskip

\begin{center}
\setlength{\unitlength}{1in} 
\begin{picture}(2.11111,2)
\put(0,0){\includegraphics[height=2in]{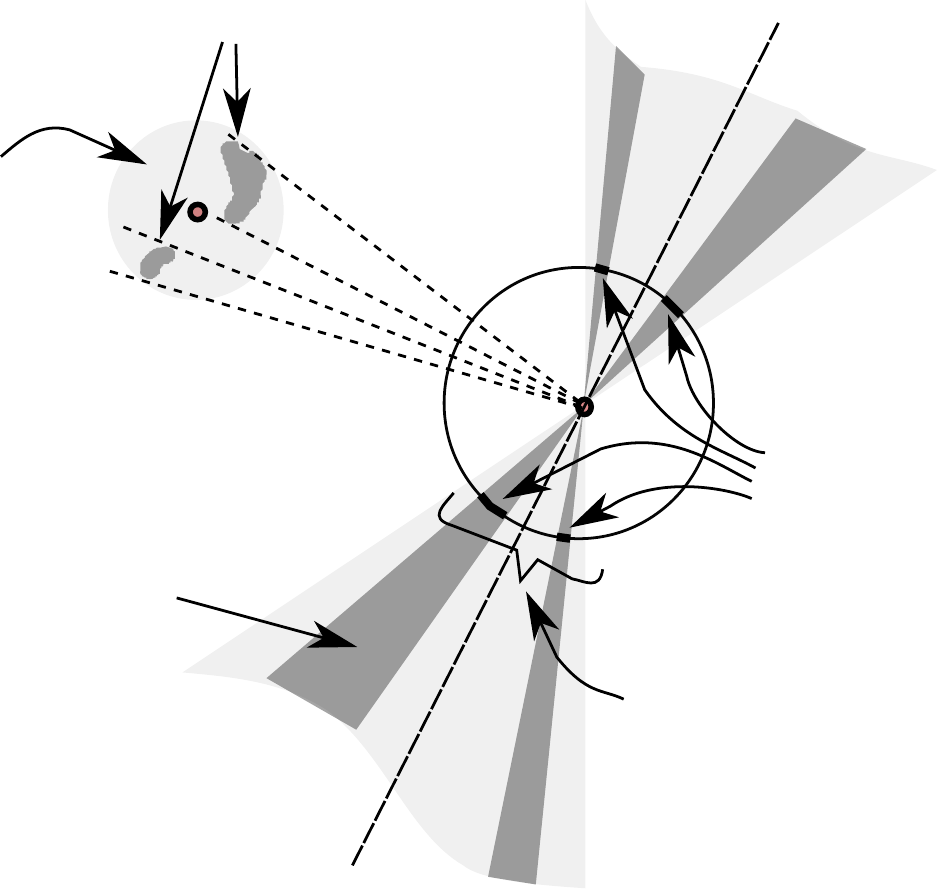}}
\put(0.45,1.94){$S$}
\put(-0.174,1.54){$B_r$}
\put(1.74,0.882){The set $A$ on $\partial B_1$}
\put(1.46,0.41){$A$ is contained inside a band}
\put(1.46,0.216){of width at most $C/|v|$}
\put(-1.58,0.66){Here $K_f(v,v')$ is bounded below}
\end{picture}
\end{center}

When $|v|$ is large and $(v'-v)/|v'-v| \in A$, the lower bound on $K_f$ easily follows from \eqref{e:klbv1} observing that $|w| \approx |v|$ every time $v+w \in S$.
\end{proof}



It is relatively easy to see that $Q_1(f,f) \leq 0$ at the point $v$ where the maximum of $f$ is achieved. The following lemma helps us quantify how negative $Q_1(f,f)$ will be at its maximum point using the $L^p$ norm of $f$. Estimates similar to this are rather common for the fractional Laplacian or other simpler integral operators. See for example equations (3.4) and (3.5) in \cite{czubak}, or \cite{constantin2012nonlinear}.

\begin{lemma} \label{l:quantifyingintegralatmax}
We say that $\Xi$ is a cone centered at $v$ if  there is a subset $A$ of the unit sphere $\partial B_1$ so that $\Xi$ is given by
\[ \Xi = \set{ v' : \frac{v'-v}{|v'-v|} \in A}.\]
Assume that $\max_{\R^N} f = f(v) = m$ and $|A| \geq \mu > 0$. Then
\[ \int_{\Xi} (m-f(v')) |v-v'|^{-N-\nu} \dd v' \geq c \frac{m^{1+p\nu/N} \mu^{1+\nu/N} }{\left(\int_{\Xi} |f(v')|^p \dd v'\right)^{\nu/N}},\]
for some constant $c$ depending on $p$, $\nu$ and $N$.
\end{lemma}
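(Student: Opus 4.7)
The plan is to reduce to the case $v=0$ by translation, then to exploit the tension between the pointwise bound $f \leq m$ and the $L^p$ control: wherever $f$ stays very close to $m$, the $L^p$ norm is forced to be large; wherever $f$ drops well below $m$, the factor $m-f(v')$ is large, and the singular weight $|v'|^{-N-\nu}$ amplifies contributions near the origin.

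Concretely, I would split $\Xi$ into $E := \{v' \in \Xi : f(v') \leq m/2\}$ and its complement $\Xi \setminus E = \{v' \in \Xi : f(v') > m/2\}$. On $E$ we have $m - f(v') \geq m/2$, so
\[
\int_\Xi (m-f(v')) |v-v'|^{-N-\nu} \dd v' \geq \frac m 2 \int_E |v'|^{-N-\nu} \dd v'.
\]
On $\Xi \setminus E$ the $L^p$ assumption gives a size bound,
\[
|\Xi \setminus E| \leq \frac{2^p}{m^p} \int_\Xi |f(v')|^p \dd v' =: V.
\]

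The heart of the argument is then a rearrangement/layer-cake estimate. In polar coordinates on the cone, $|\Xi \cap B_R| = |A| R^N / N$, and the weight $|v'|^{-N-\nu}$ is radially decreasing, so $\int_E |v'|^{-N-\nu} \dd v'$ is minimized (subject to $|\Xi \setminus E| \leq V$) when $\Xi \setminus E$ fills the ball of the cone closest to the origin, namely $\Xi \cap B_R$ with $R = (NV/|A|)^{1/N}$. This yields
\[
\int_E |v'|^{-N-\nu} \dd v' \geq |A|\int_R^\infty \rho^{-1-\nu} \dd \rho = \frac{|A|}{\nu} R^{-\nu} = \frac{|A|^{1+\nu/N}}{\nu N^{\nu/N} V^{\nu/N}}.
\]
Substituting $V = 2^p m^{-p} \int_\Xi |f|^p \dd v'$ and using $|A| \geq \mu$ gives
\[
\int_\Xi (m-f(v')) |v-v'|^{-N-\nu} \dd v' \geq c \, \frac{m^{1+p\nu/N} \mu^{1+\nu/N}}{\bigl(\int_\Xi |f(v')|^p \dd v'\bigr)^{\nu/N}},
\]
as claimed, with $c$ depending only on $p$, $\nu$, $N$.

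I do not anticipate a serious obstacle: the rearrangement step is the only piece requiring thought, and it is transparent once one observes that the weight is radially decreasing along the cone. The only minor care needed is the bookkeeping of exponents, in particular tracking how the factor $m^{-p}$ in the bound on $V$ produces the exponent $1+p\nu/N$ on $m$ in the final estimate, and how $|A|$ contributes the factor $\mu^{1+\nu/N}$ (one copy of $|A|$ from the spherical integration, and the factor $|A|^{\nu/N}$ coming from the optimal radius $R$).
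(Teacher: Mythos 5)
Your proposal is correct and follows essentially the same route as the paper: the same split at level $m/2$, the same Chebyshev bound on the super-level set, and the same rearrangement observation that the worst configuration of the super-level set is the cone's intersection with a small ball. The only cosmetic difference is that you keep $|A|$ through the computation and invoke $|A| \geq \mu$ at the end, while the paper substitutes $\mu$ from the start.
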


\begin{proof}
Let $G = \{v' \in \Xi : f(v') \leq m/2 \}$. By Chebyshev's inequality,
\[ |\Xi \setminus G| = | \{v' \in \Xi : f(v') > m/2 \} | \leq 
\frac {2^p}{m^p} \int_{\Xi} |f(v')|^p \dd v'.\]
For any point $v' \in G$, we have $(m-f(v')) \geq m/2$. Therefore,
\[ \int_{\Xi} (m-f(v')) |v-v'|^{-N-\nu} \dd v' \geq \frac m 2 \int_{G} |v-v'|^{-N-\nu} \dd v'.\]
Given our upper bound for $|\Xi \setminus G|$, the minimum possible value of $\int_G |v-v'|^{-N-\nu} \dd v'$ is then given by locating $\Xi \setminus G$ where $|v-v'|^{-N-\nu}$ is the largest possible. That is, the worst case scenario is when $G = \Xi \setminus B_r$, choosing $r$ so that
$|\Xi \cap B_r| = 2^p/m^p \, \int_{\Xi} |f(v')|^p \dd v'$. This corresponds to
\[ r = \left( \frac{N 2^p}{\mu m^p} \int_{\Xi} |f(v')|^p \dd v' \right)^{1/N}.\]
Thus, we get
\begin{align*}
 \int_{\Xi} (m-f(v')) |v-v'|^{-N-\nu} \dd v' &\geq \frac m 2 \int_{G} |v-v'|^{-N-\nu} \dd v',\\
&\geq \frac m 2 \int_{\Xi \setminus B_r} |v-v'|^{-N-\nu} \dd v',\\
&= \frac m 2  \mu \frac 1 {\nu} r^{-\nu} = c m^{1+p \nu /N} \mu^{1+\nu/N} \left( \int_\Xi |f(v')|^p \dd v' \right)^{-\nu/N}.
\end{align*}

\end{proof}

\begin{thm} \label{t:Linfty}
Let $f : [0,\infty) \times \R^N \times \R^N \to \R$ be a solution of the Boltzmann equation \eqref{e:Boltzmann-inhomogeneous}. Assume that $f$ is periodic in $x$, and for all $x \in \R^N$ and $t \geq 0$, we have
\begin{align*}
0 < M_1 \leq \int_{\R^N} f(t,x,v) \dd v &\leq M_0, \\
\int_{\R^N} |v|^2 f(t,x,v) \dd v &\leq E_0, \\
\int_{\R^N} f(t,x,v) \log f(t,x,v) \dd v &\leq H_0.
\end{align*}
Moreover, if $\gamma > 2$, we need to assume a higher moment condition, which says that for all $x \in \R^N$ and $t \geq 0$,
\begin{equation} \label{e:extra-moment}  \int_{\R^N} f(t,x,v) |v|^\gamma \dd v \leq \tilde K_0. \qquad \text{\bf (only necessary if $\gamma>2$)}
\end{equation}
If $\gamma+\nu \leq 0$, we need to assume higher integrability. In this case, we also assume that there is some $p > N / (N+\nu+\gamma)$ so that for all $x \in \R^N$ and $t \geq 0$, and for $q = \max(0,1-\frac N \nu (\nu+\gamma))$,
\begin{equation} \label{e:extra-integrability}
\int_{\R^N} (1+|v|)^q f(t,x,v)^p \dd v \leq K_0 \qquad \text{\bf (only necessary if $\gamma+\nu \leq 0$)}
\end{equation}

Then, an $L^\infty$ bound holds
\[ \|f(t,\cdot)\|_{L^\infty} \leq C(t),\]
Here $C(t) = a + b t^{-\beta}$ for some constants $a$, $b$ and $\beta$ depending only on $E_0$, $M_0$, $M_1$, $H_0$, $\tilde K_0$ (if $\gamma > 2$) and $K_0$ (if $\gamma+\nu \leq 0$).
\end{thm}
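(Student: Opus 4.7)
The plan is a maximum-principle argument built on the splitting $Q=Q_1+Q_2$. Let $M(t)=\sup_{x,v}f(t,x,v)$; by periodicity in $x$ this is attained at some $(x_0,v_0)$ (after an approximation to handle the possibility that the sup in $v$ is not attained, e.g.\ by adding a small penalization vanishing at infinity). Because $\nabla_x f$ vanishes at such a maximizer, the transport term drops out and $f_t(t,x_0,v_0)=Q_1(f,f)(v_0)+Q_2(f,f)(v_0)$. Rather than differentiate $M$ directly, I would use a barrier $\phi(t)=a+b t^{-\beta_\ast}$ with $\beta_\ast=N/(p\nu)$ and argue at the first touching time $t_0$, so that $f_t(t_0,x_0,v_0)\ge\phi'(t_0)$ and $(x_0,v_0)$ is a global maximum of $f(t_0,\cdot,\cdot)$.

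For the strong negative lower bound on $-Q_1$, Lemma \ref{l:K-below-v-large} supplies a symmetric cone $\Xi$ of directions in $\partial B_1$, of measure at least $\mu/(1+|v_0|)$, on which $K_f(v_0,v')\ge\lambda(1+|v_0|)^{1+\gamma+\nu}|v'-v_0|^{-N-\nu}$. The geometric constraint there (these directions lie in a thin band around the great circle perpendicular to $v_0$) forces $|v'|\ge c(1+|v_0|)$ for every $v'\in\Xi$ when $|v_0|$ is large, so the weighted $L^p$ hypothesis gives $\int_\Xi f^p\le C(1+|v_0|)^{-q}K_0$. Feeding these bounds into Lemma \ref{l:quantifyingintegralatmax} with $|A|=\mu/(1+|v_0|)$, we obtain
\[
-Q_1(f,f)(v_0)\ \ge\ c\,(1+|v_0|)^{\alpha_1}\,M^{1+p\nu/N},\qquad \alpha_1=\gamma+\tfrac{\nu(N-1)}{N}+\tfrac{q\nu}{N}.
\]
The choice $q=\max(0,1-(N/\nu)(\gamma+\nu))$ is calibrated precisely so that $\alpha_1\ge\max(0,\gamma)$, with $\alpha_1=0$ exactly in the critical subcase $\gamma+\nu\le 0$.

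For the upper bound on $Q_2=CM\int f(v')|v_0-v'|^\gamma\,dv'$ (Lemmas \ref{l:expression-for-Q2} and \ref{l:Btilde}), when $\gamma\in[0,2]$ the estimate $|v_0-v'|^\gamma\le C((1+|v_0|)^\gamma+|v'|^\gamma)$ combined with $M_0,E_0$ gives $Q_2\le C(1+|v_0|)^\gamma M$; when $\gamma>2$ the extra moment $\tilde K_0$ plays the same role. When $\gamma<0$ I would split at $|v_0-v'|=1$: the far part is dominated by $M_0$, while for the near part I write $f\le M^{1-\theta}f^\theta$ and apply H\"older with $|v_0-v'|^\gamma\in L^{p/(p-\theta)}$, which requires $\theta<p(N+\gamma)/N$. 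Combined with $\theta>1-p\nu/N$ (needed so the resulting power $M^{2-\theta}$ satisfies $2-\theta<1+p\nu/N$), the interval of admissible $\theta$ is nonempty precisely when $p>N/(N+\gamma+\nu)$, matching the hypothesis exactly. In every case $Q_2\le C\,(1+|v_0|)^{\alpha_2}\,M^{1+\rho}$ with $\alpha_2\le\max(0,\gamma)\le\alpha_1$ and $\rho<p\nu/N$.

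Since $\alpha_1\ge\alpha_2\ge 0$, the $(1+|v_0|)$ weights factor out of the negative side and the resulting inequality at the touching point is uniform in $v_0$:
\[
\phi'(t_0)\ \le\ -c_1 M^{1+p\nu/N}+c_2 M^{1+\rho},\qquad \rho<p\nu/N.
\]
For $M(t_0)$ large this is bounded above by $-\tfrac12 c_1 M^{1+p\nu/N}$, and solving the ODE $M'=-\tfrac12 c_1 M^{1+p\nu/N}$ gives exactly the $t^{-N/(p\nu)}$ blow-up rate, contradicting $\phi'(t_0)=-b\beta_\ast t_0^{-\beta_\ast-1}$ for $a,b$ chosen large enough. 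The main obstacle is the bookkeeping of the $(1+|v_0|)$ exponents: one has to verify that the cone $\Xi$ really sits in $\{|v'|\ge c|v_0|\}$ so that the weighted $L^p$ hypothesis can be cashed in (this is exactly the purpose of the last bullet of Lemma \ref{l:K-below-v-large}), and that the H\"older exponent $\theta$ can be chosen in the critical range $\gamma+\nu\le 0$ --- which is where the sharp condition $p>N/(N+\gamma+\nu)$ comes from.
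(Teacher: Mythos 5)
Your proof is correct and follows essentially the same route as the paper: a barrier/maximum-principle argument at the first touching time of $\phi(t)=a+bt^{-N/(p\nu)}$, the decomposition $Q=Q_1+Q_2$, Lemma~\ref{l:K-below-v-large} to produce a cone of aperture $\approx (1+|v_0|)^{-1}$ with the stated lower bound on $K_f$, and Lemma~\ref{l:quantifyingintegralatmax} to quantify how negative $Q_1$ is at the maximizer. Your exponent bookkeeping via $\alpha_1,\alpha_2,\rho$ is a compact reorganization of the paper's case-by-case analysis, and your interpolation/H\"older estimate for $Q_2$ when $\gamma<0$ is equivalent to the paper's radius-optimization $\min_r\left( m r^{N+\gamma} + r^{\gamma+N/p'}\|f\|_{L^p} \right)$.
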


\begin{remark}
Note that the assumption \eqref{e:extra-moment} is redundant when $\gamma \in [0,2]$ since in that case the $\gamma$-moment is controlled by $M_0$ and $E_0$. When $\gamma<0$, the assumption \eqref{e:extra-moment} is not necessary for the estimate.

Also, if $\gamma+\nu > 0$, the assumption \eqref{e:extra-integrability} is redundant, since it always holds for $p=1$ and $q \in [0,1]$, with $C$ depending on $M_0$ and $E_0$.
\end{remark}

\begin{proof}
For every $t \geq 0$, let us define $m(t)$ to be the maximum of $f(t,x,v)$ for $x,v \in \R^N$. Since we assume that $f$ is a classical solution, there must be at least one point $x \in \R^N$ and $v \in \R^N$ so that $f(t,x,v) = m(t)$. The periodicity of $f$ with respect to $x$ is used only for the existence of this point.

We claim that there are constants $a$ and $b$, depending only of the parameters in the Lemma, such that
\begin{equation} \label{e:ODEm}  m(t) < a + b t^{- \frac N {p\nu}}.
\end{equation}
Here $p$ is the parameter from \eqref{e:extra-integrability} and we take $p=1$ if $\gamma+\nu > 0$. The conclusion of the Theorem will follow with $C(t) = a + b t^{- \frac N {p\nu}}$.

Indeed, \eqref{e:ODEm} must hold for $t$ sufficiently small simply because the right hand side diverges as $t \to 0$. If \eqref{e:ODEm} was not true, then there would be a first time $t$ when the inequality is invalidated. For this time $t$, there would exist a point $x$ and $v$ such that
\begin{align}
f(t,x,v) &= m(t) = a + b t^{-N/\nu}, \label{e:mp1} \\
f_t(t,x,v) &\geq \frac{\dd}{\dd t} \left( a + b  t^{-N/\nu} \right) = - \frac {Nb} \nu t^{-1-N/\nu} = - \frac N \nu b^{\nu/N} m(t) (m(t)-a)^{\nu/N}, \label{e:mp2} \\
\grad_x f &= 0.
\end{align}
We will obtain a contradiction by evaluating the equation \eqref{e:Boltzmann-inhomogeneous} at the point $(t,x,v)$. Indeed, since $\grad_x f(t,x,v)=0$, we get
\[ f_t(t,x,v) = Q(f,f)(t,x,v).\]
The rest of the proof consists in obtaining an upper bound for $Q(f,f)(t,x,v)$ which contradicts \eqref{e:mp2}.

All the computations below are for a fixed value of $t$ and $x$. We omit writing it in order to keep the computation uncluttered. That is, we write $f(v) := f(t,x,v)$ and $m := m(t)$.

We use the decomposition $Q = Q_1 + Q_2$.
\[
f_t(v) = Q_1(f,f)(v) + Q_2(f,f)(v).\]

The proof is based on various estimates for $Q_1$ and $Q_2$. There are a few cases, depending on the range of values of $\gamma$ and $\nu$. Note that some of the estimates below are redundant. For example, a bound depending on $M_0$ is a particular case of the bound depending on $L^p$ when $p=1$. Yet, we cover all cases in detail below.

\noindent \textbf{$\circ$ Upper bound for $Q_2$ using $M_0$ and $E_0$ only.}

From Lemma \ref{l:expression-for-Q2}, we have that \[ Q_2(f,f)(v) = (\tilde B \ast f)(v) \ f(v).\]
Recall from Lemma \ref{l:Btilde} that $\tilde B(v) = c |v|^\gamma$. If $\gamma \in [0,2]$, then we would have $(\tilde B \ast f)(v) \leq C |v|^\gamma$, where $C$ depends only on $M_0$ and $E_0$.

If $\gamma > 2$, then we need to assume that $f$ has finite $\gamma$ moments in order to claim that $\tilde B \ast f \leq C |v|^\gamma$. This is the only point where \eqref{e:extra-moment} is used. In this case the constant $C$ depends on $M_0$ and the constant from \eqref{e:extra-moment}.

If $\gamma < 0$, then
\begin{align*}
(\tilde B \ast f)(v) &= \int_{\R^N} c |v-w|^\gamma f(w) \dd w, \\
&= \int_{B_r(v)} c |v-w|^\gamma f(w) \dd w + \int_{\R^N \setminus B_r(v)} c |v-w|^\gamma f(w) \dd w, \text{ for any value of } r>0,\\
&\leq \min_r \left\{ C m r^{N+\gamma} + C r^\gamma M_0 \right\} = C M_0^{1+\gamma/N} m^{-\gamma/N}
\end{align*}

Summarizing,
\begin{equation} \label{e:ub-Q2}  Q_2(f,f)(v) = (\tilde B \ast f)(v)  f(v) \leq \begin{cases}
C m (1+|v|)^\gamma & \text{if } \gamma \geq 0,\\
C m^{1-\gamma/N} & \text{if } \gamma<0,
\end{cases}
\end{equation}
where the constant $C$ depends on $M_0$ and $E_0$.

\noindent \textbf{Upper bound for $Q_2$ using $\|f\|_{L^p} \leq C$ for some $p>1$.}

Unfortunately, when $\gamma+\nu < 0$, we have not been able to finish this proof using \eqref{e:ub-Q2}. Instead, we estimate $\tilde B \ast f$ using $\|f\|_{L^p}$ for some $1 < p < N / (N+\gamma)$. Note that in this case $\gamma < 0$. We get

\begin{align*}
(\tilde B \ast f)(v) &= \int_{\R^N} c |v-w|^\gamma f(w) \dd w, \\
&= \int_{B_r(v)} c |v-w|^\gamma f(w) \dd w + \int_{\R^N \setminus B_r(v)} c |v-w|^\gamma f(w) \dd w, \text{ for any value of } r>0,\\
&\leq \min_r \left( C m r^{N+\gamma} + C \|f\|_{L^p} \left( \int_{\R^N \setminus B_r} |w|^{\gamma p'} \dd w \right)^{1/p'} \right), \\
&\leq C \min_r \left(  m r^{N+\gamma} + r^{\gamma + N/p'} \|f\|_{L^p} \right) \qquad \text{using that $\gamma p' < -N$},\\
&= C m^{1 - p \frac{N+\gamma}N } \qquad \text{ where the constant $C$ depends on $K_0$}.
\end{align*}
Therefore
\begin{equation} \label{e:ub-Q2-withLp}  Q_2(f,f) \leq C m^{2 - p \frac{N+\gamma} N }.
\end{equation}

\noindent \textbf{$\circ$ Lower bound for $Q_1$ - general considerations.}

From Lemma \ref{l:expression-for-Q1}, we have that
\[ Q_1(f,f)(v) = \int_{\R^N} K_f(v,v') \left(f(v') - f(v)\right) \dd v'.\]
Note that since $f(v)=m=\max f$, then the integrand is non positive everywhere. The term $Q_1(f,f)(v)$ will be negative. A good estimate of how negative $Q_1(f,f)(v)$ is, is what makes this proof possible.

Using Lemma \ref{l:K-below-v-large}, we know that there is a set of directions $A \subset \partial B_1$ (depending on $t$, $x$ and $v$) with measure bounded below by $c (1+|v|)^{-1}$ so that $K_f$ satisfies an appropriate bound below when $(v'-v)$ is parallel to some direction in $A$:
\[ K_f(v,v') \geq \lambda (1+|v|)^{1+\gamma+\nu} |v-v'|^{-N-\nu} \ \text{whenever } \frac {v'-v}{|v'-v|} \in A.\]

While $|A|$ tends to zero as $|v| \to \infty$, at the same time, since $1+\gamma+\nu >0$, the value of the lower bound on $K_f$ grows as $|v| \to \infty$. A balance between these two effects will give us the right estimate for all values of $v \in \R^N$. We estimate $Q_1(f,f)$ in two different ways depending on whether $v$ is small or large.

We write
\begin{equation} \label{e:Linfty-nc}  \begin{aligned}
Q_1(f,f)(v) &= \int_{\R^N} (f(v')-f(v)) K_f(v,v') \dd v', \\
&\leq -\lambda (1+|v|)^{1+\gamma+\nu} \left(
\int_{\{ (v'-v)/|v'-v| \in A \}} (m - f(v')) |v-v'|^{-N-\nu} \dd v' \right)
\end{aligned} 
\end{equation}

Let us call $\Xi$ the cone of non degenerate values for $v'$, that is
\[ \Xi := \set{ v' : \frac{v'-v}{|v'-v|} \in A}.\]
Thus,
\[ Q_1(f,f)(v) \leq -\lambda (1+|v|)^{1+\gamma+\nu} \int_{\Xi} (m-f(v')) |v-v'|^{-N-\nu} \dd v' .\]
According to Lemma \ref{l:K-below-v-large}, the cone $\Xi$ has an aperture which is bounded below by $c (1+|v|)^{-1}$ and moreover $|(v'-v) \cdot v| \leq C |v'-v|$ for all $v' \in \Xi$. Depending on this constant $C$, let us pick $R>0$ so that $|v'|>|v|/2$ if $v' \in \Xi$ and $|v|>R$.

\noindent \textbf{$\circ$ Lower bound for $Q_1$ for small values of $|v|$ using $M_0$.}

Let us start with the case $|v| < R$. In this case, the aperture of the cone $\Xi$ is uniformly bounded below (in measure). In order to get a quantitative estimate of the right hand side above, we use that $m \geq f$ everywhere and $\int f(v') \dd v' \leq M_0$. Applying Lemma \ref{l:quantifyingintegralatmax}, we get
\[ \int_\Xi (m-f(v')) |v-v'| dv' \geq c m^{1+\nu/N},\]
and therefore
\begin{equation} \label{e:ubq11}  Q_1(f,f)(v) \leq -c m^{1+\nu/N} \ \text{ every time } |v|<R.
\end{equation}

\noindent \textbf{$\circ$ Lower bound for $Q_1$ for small values of $|v|$ using $\|f\|_{L^p} \leq C$.}

If we had the extra piece of information that $\|f\|_{L^p} \leq C$ for some $p>1$, then we get a better estimate at this step. We will use this improved estimate in the case $\gamma+\nu \leq 0$. Indeed, applying Lemma \ref{l:quantifyingintegralatmax}, we get
\[ \int_\Xi (m-f(v')) |v-v'| dv' \geq c m^{1+p\nu/N},\]
and therefore
\begin{equation} \label{e:ubq12}  Q_1(f,f)(v) \leq -c m^{1+p\nu/N} \ \text{ every time } |v|<R,
\end{equation}
where the constant $c$ depends on $\|f\|_{L^p}$.



\noindent \textbf{$\circ$ Lower bound for $Q_1$ for large values of $|v|$ using $E_0$.}

For $|v|>R$, the opening of $\Xi$ is bounded below by $c/|v|$ (i.e. the value of $\mu$ to be used in Lemma \ref{l:quantifyingintegralatmax} will be $c/|v|$). In order to estimate the right hand side of \eqref{e:Linfty-nc} for $|v|>R$, we will argue similarly to the case $|v|\leq R$, but we use a better estimate for $\int_{\Xi} f(v') \dd v'$. Indeed, since $|v'| > |v|/2$ for all $v' \in \Xi$,

\begin{equation} \label{e:ub2}  \int_{\Xi} f(v') \dd v' \leq 4E / |v|^2 \leq 4E_0 / |v|^2.
\end{equation}
We estimate the right hand side in \eqref{e:Linfty-nc} for $|v|>R$ using \eqref{e:ub2}. Applying Lemma \ref{l:quantifyingintegralatmax}, we get
\[ \int_\Xi (m-f(v')) |v-v'|^{-N-\nu} dv' \geq c m^{1+\nu/N} |v|^{\nu/N-1}.\]
Thus,
\begin{equation} \label{e:ubq13}  Q_1(f,f)(v) \leq -c 
m^{1+\nu/N} |v|^{\nu (1+1/N) + \gamma}  \ \text{ for } |v|>R.
\end{equation}

\noindent \textbf{$\circ$ Lower bound for $Q_1$ for large values of $|v|$ using $\int (1+|v|)^q f^p \leq K_0$.}

For $|v|>R$, the opening of $\Xi$ is bounded below by $c/|v|$. In order to estimate the right hand side of \eqref{e:Linfty-nc} for $|v|>R$, we will argue similarly as before but using the estimate on $\int (1+|v|)^q f^p$ instead of $E_0$. Since $|v'| > |v|/2$ for all $v' \in \Xi$,

\begin{equation} \label{e:ub3}  \int_\Xi |f(v')|^p \dd v' \leq \frac 1 {|v|^q} \left( \int_{\R^N} (1+|w|)^q |f(w)|^p \dd w \right)
\end{equation}
Applying now Lemma \ref{l:quantifyingintegralatmax}, we get
\[ \int_\Xi (m-f(v')) |v-v'| dv' \geq c m^{1+p\nu/N} |v|^{(q-1)\nu/N-1}.\]
Thus,
\[
 Q_1(f,f)(v) \leq -c 
m^{1+p\nu/N} |v|^{\nu (1+(q-1)/N) + \gamma}  \ \text{ for } |v|>R.
\]

Since we chose $q = \max(0,1-\frac N \nu (\nu+\gamma))$, then $\nu (1+(q-1)/N) + \gamma \geq 0$ and
\begin{equation} \label{e:ubq14}  Q_1(f,f)(v) \leq -c 
m^{1+p\nu/N} \ \text{ for } |v|>R.
\end{equation}

\noindent \textbf{$\circ$ Summary and contradiction for $\gamma+\nu > 0$.}

For any value of $v \in \R^N$, we combine \eqref{e:ubq11} with \eqref{e:ubq13} and get
\begin{equation} \label{e:ub-Q1}  Q_1(f,f)(v) \leq -c 
m^{1+\nu/N} (1+|v|)^{\nu (1+1/N) + \gamma}.
\end{equation}



Let us first analyse the case $\gamma+\nu>0$, using \eqref{e:ub-Q2} and \eqref{e:ub-Q1}, we get
\[ f_t = Q(f,f) \leq -c 
m^{1+\nu/N} (1+|v|)^{\nu (1+1/N) + \gamma} + Cm^{1+\gamma^-/N} (1+|v|)^{\gamma^+}. \]
Here, where we write $\gamma^-$, we mean $(|\gamma|-\gamma)/2$ and $\gamma = \gamma^+ - \gamma^-$. This inequality holds for $t>t_0$ and for $v$ being the point where the maximum $m$ is achieved (or one of them if there are many).

Since in this case $\nu+\gamma > 0$, then in particular $\nu (1+1/N) + \gamma > \gamma^+ \geq 0$. Then $(1+|v|)^{\nu(1+1/N)+\gamma} \geq (1+|v|)^{\gamma^+} \geq 1$.

Since we assume $\nu+\gamma > 0$, then  $1+\nu/N > 1+\gamma^-/N$. We let $a= (2C/c)^{N/(\nu-\gamma^-)}$, where $c$ and $C$ are the constants in the equation displayed above. Recalling that $f(t,x,v) = m(t) > a$, we get
\[ 
f_t(t,x,v) \leq -\frac c 2 m(t)^{1+\nu/N} \text{ if } m(t)>a.
\]
Choosing $b$ such that $N/\nu b^{\nu/N} = c/2$, we found a contradiction with \eqref{e:mp2} in the case $\gamma+\nu > 0$.

\noindent \textbf{$\circ$ Summary and contradiction for $\gamma+\nu \leq 0$ assuming $\int (1+|v|)^q f^p \leq K_0$ for some $p > N/(N+\nu+\gamma)$.}

In the case $\gamma+\nu \leq 0$, we assume \eqref{e:extra-integrability} in order to get the contradiction. Note that we can assume without loss of generality that $p < N/(N+\gamma)$, since, otherwise, we can estimate all norms $\|f\|_{L^{p_1}}$ for $p_1 \in (1,p)$ using $\|f\|_{L^p}$ and $M_0$.

Combining \eqref{e:ubq12} with \eqref{e:ubq14}, for all values of $v \in \R^N$,
\begin{equation} \label{e:ub-Q1-Lp}  Q_1(f,f)(v) \leq -c 
m^{1+p\nu/N}.
\end{equation}

We use \eqref{e:ub-Q2-withLp} instead of \eqref{e:ub-Q2} and obtain
\begin{equation} \label{e:ub-ode2}  f_t(t,x,v) \leq -c 
m^{1+p\nu/N} + Cm^{2- p \frac{N+\gamma}N}. 
\end{equation}
We arrive to the same conclusion provided that $1+p\nu/N > 2 - p \frac{N+\gamma}N$. That is, when $p > N / (N+\nu+\gamma)$. We argue as before choosing $a$ such that
\[ 
f_t(t,x,v) \leq \begin{cases}
 -\frac c 2 m^{1+p\nu/N} & \text{ if } m>a,\\
 0  & \text{ if } m \leq a.
 \end{cases}
\]
and the contradiction follows as before, now for $\gamma+\nu \leq 0$.
\end{proof}


\begin{remark}
In \cite{gamba2009upper}, a Maxwellian upper bound for the homogeneous Boltzmann equation in the cut-off case is obtained using a maximum principle idea. They prove that if $g$ is a function which satisfies the linear equation
\[ g_t \geq Q(f,g),\]
and $g(0,v) \geq f(0,v)$ for all $v \in \R^N$, then $g \geq f$ for all times. One could speculate with proving Theorem \ref{t:Linfty} using $g(t,v) = C(t)$ (constant with respect to $v$). This approach, by itself, does not work. Indeed, for such function $g$, we would have $Q_1(f,g)=0$ and $Q_2(f,g)>0$. It is the negativity of the term $Q_1(f,f)$ what allows us to succeed in the proof, and this follows from a non linear analysis of the term $Q_1(f,f)$.
\end{remark}

\begin{remark}
The periodicity assumption with respect to the space variable $x$ is made simply for the convenience of saying that the maximum of $f(t,x,v)$ is achieved for every fixed value of $t$. For any appropriate decay condition which assures this, the proof would work without modification.
\end{remark}

\begin{remark} \label{r:enhanceddiffusion}
A simplified version of the proof of Theorem \ref{t:Linfty} would prove the following fact. Let $\gamma \in (-N,0]$ and $s \in (0,1)$. If $f$ is a function satisfying the inequality
\[ f_t(t,v) + (-\Delta)^s f(t,v) \leq f(t,v) \,\left(  \int |w|^\gamma \, f(t,v-w) \dd w \right),\]
and moreover, we assume that
\[ \sup_{t \in [0,\infty)} \|f(t,\cdot)\|_{L^p} \leq K_0 \text{ for some } p > N/ (N+2s+\gamma) \]
then $\|f(t,\cdot)\|_{L^\infty} \leq C(t)$ for some function $C(t)$ depending on $t$, $p$, $K_0$ and dimension.

In this case the result seems to be optimal, in the sense that if $p < N/ (N+2s+\gamma)$ there would be no upper bound in general for $\|f(t,\cdot)\|_{L^\infty}$.

It is not clear if the condition \eqref{e:extra-integrability} is really necessary for the result of Theorem \ref{t:Linfty} to hold. When $\gamma+\nu<0$, the kernel $K_f$ in the expression for $Q_1$ of Corollary \ref{c:Kf-approx} tends to be larger around the points where $f$ is larger. This \emph{enhanced diffusion} where $f$ is larger is not taken advantage in the proof of Theorem \ref{t:Linfty}, or in the majority of the estimates for the non cut-off Boltzmann equation so far.  Indeed, the intuition driving the coercivity inequalities and also, to some extent, the methods in this paper is that
\[ -Q(f,f) = c(-\Delta)^{\nu/2} f + \text{(lower order terms)}.\]
This is only an intuitive statement which, strictly speaking, is false. A more accurate statement would be
\[ -Q(f,f) = c(-\Delta)^{\nu/2} f + \text{(a monotone operator)}+ \text{(lower order terms)}.\]

Regularity results for the Boltzmann equation will always be conditional for $\gamma+\nu < 0$ unless the positive definite term in the middle can be taken advantage of.
\end{remark}

\section{H\"older bounds}

In this section we prove Theorem \ref{t:calpha-linearized-intro}, and corollary \ref{c:Calpha-Boltzmann-intro}.

We start with the $C^\alpha$ estimate for the linear Boltzmann equation. This result is practically a raw application of Theorem \ref{t:Calpha-general} after the analysis of the terms $Q_1$ and $Q_2$ in the previous sections.

\begin{thm} \label{t:Calpha-linearized}
Assume $B$ has the form \eqref{e:B-assumption}. Let $f:[0,T] \times \R^N \to \R$ be a non negative function such that
\begin{align*}
\int_{\R^N} f(t,v) \log f(0,v) \dd v &\leq H_0, \\
\int_{\R^N} |v|^2 f(t,v) \dd v &\leq E_0, \\
M_1 \leq \int_{\R^N} f(t,v) \dd v &\leq M_0, \\
\sup_{(t,v) \in [0,T]\times B_R} \int_{\R^N} |w|^\gamma f(t,v+w) \dd v &\leq K_0 && \text{\bf (only necessary if $\gamma < 0$)}, \\
\sup_{(t,v) \in [0,T]\times B_R} \int_{\R^N} |w|^{\gamma+\nu} f(t,v+w) \dd v &\leq \tilde K_0 && \text{\bf (only necessary if $\gamma+\nu > 2$)}
\end{align*}
Let $R>0$, $h \in [0,T] \times B_R \to \R$ and $g : [0,T] \times \R^N \to \R$ be a solution of the equation
\[ g_t = Q(f,g) + h \text{ in } [0,T] \times B_R.\]
Then, the following a priori estimate holds. For some $\alpha>0$ and $C>0$,
\[ \|g\|_{C^\alpha([T/2,T] \times B_{R/2}} \leq C \left( \|g\|_{L^\infty([0,T]\times \R^N} + \|h\|_{L^\infty([0,T]\times B_R} \right).\]
The constants $\alpha$ and $C$ depend on the cross section $B$, the dimension $N$, $R$, $T$ and the values of $M_0$, $M_1$, $H_0$, $E_0$, $K_0$ and $\tilde K_0$ only.
\end{thm}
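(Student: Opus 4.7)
The strategy is to cast the equation as an integro-differential equation of the type treated in Section~\ref{s:pie} and invoke Theorem~\ref{t:Calpha-general} directly. Using the splitting $Q = Q_1 + Q_2$ of Section~\ref{s:twoterms} and the kernel representation of Lemma~\ref{l:expression-for-Q1}, the equation becomes
\[
g_t(t,v) - \int_{\R^N} \bigl( g(t,v') - g(t,v) \bigr) K_f(t,v,v') \dd v' \;=\; \tilde h(t,v), \qquad (t,v) \in [0,T]\times B_R,
\]
with $\tilde h := h + Q_2(f,g)$. The proof then reduces to two subtasks: (i) checking that $K_f$ satisfies hypotheses \eqref{e:K-symmetric}--\eqref{e:K-bound-below} with constants controlled by the listed data, and (ii) producing the pointwise bound $\|Q_2(f,g)\|_{L^\infty([0,T]\times B_R)} \leq C\|g\|_{L^\infty}$.

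For (i), the symmetry \eqref{e:K-symmetric} is immediate from the formula in Corollary~\ref{c:Kf-approx} (the defining integral is taken over the hyperplane perpendicular to $v'-v$ and depends on $|v'-v|$ only). The upper bound \eqref{e:K-bound-above} is Lemma~\ref{l:K-above}, which requires control of $\sup_{v \in B_R}\int |w|^{\gamma+\nu} f(t,v+w) \dd w$: this is controlled by $M_0$ and $E_0$ whenever $\gamma+\nu \leq 2$, and is exactly $\tilde K_0$ otherwise. The lower bound \eqref{e:K-bound-below} is Lemma~\ref{l:K-below}, whose hypotheses involve only $M_0,M_1,E_0,H_0$ and $R$. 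Thus $K_f$ satisfies all three assumptions with parameters $\lambda, \Lambda, \mu$ depending only on the admissible data.

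For (ii), Lemmas~\ref{l:expression-for-Q2} and~\ref{l:Btilde} give $Q_2(f,g)(t,v) = C\, g(t,v)\cdot(f \ast |\cdot|^\gamma)(t,v)$, so for $v \in B_R$,
\[
|Q_2(f,g)(t,v)| \;\leq\; C \|g\|_\infty \int_{\R^N} |u|^\gamma f(t,v+u) \dd u.
\]
When $\gamma < 0$, the right-hand integral is bounded by $K_0$ by assumption. When $0 \leq \gamma \leq 2$, the elementary bound $|u|^\gamma \leq 1 + |u|^2$ and a translation of the second moment give control by $M_0$ and $E_0$ (with a factor depending on $R$). When $\gamma > 2$ (which forces $\gamma+\nu > 2$, so $\tilde K_0$ is available), split at $|u|=1$: the inner region contributes at most $C M_0$, while the outer region is controlled by $|u|^\gamma \leq |u|^{\gamma+\nu}$ and $\tilde K_0$.

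Combining (i) and (ii), Theorem~\ref{t:Calpha-general} applied to the rewritten equation yields the desired H\"older estimate on $[T/2,T] \times B_{R/2}$, with $\|\tilde h\|_\infty$ collapsing to $\|h\|_\infty + C\|g\|_\infty$. The conceptual work is already invested in the preparatory Lemmas~\ref{l:expression-for-Q1}--\ref{l:K-below} and in the regularity result of \cite{schwab}; what remains is the regime bookkeeping described above, verifying in each case that the assumption on the relevant moment of $f$ in the statement is used exactly where it is not redundant with the macroscopic bounds. The main structural obstacle, already flagged in the introduction, is that the integro-differential estimate of \cite{schwab} is a local one and does not directly rely on any coercivity; the content of the proof is to recognize that the physical decomposition $Q = Q_1 + Q_2$ reveals precisely the symmetric, elliptic-type kernel to which that local theory applies.
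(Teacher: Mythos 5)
Your proof is correct and follows essentially the same route as the paper's own argument: decompose $Q = Q_1 + Q_2$, move $Q_2(f,g) = (\tilde B \ast f)\,g$ into the right-hand side, verify hypotheses \eqref{e:K-symmetric}--\eqref{e:K-bound-below} for $K_f$ via Lemmas \ref{l:K-above} and \ref{l:K-below}, and invoke Theorem \ref{t:Calpha-general}. One minor bookkeeping slip in step (i): when $\gamma+\nu \le 0$ the $(\gamma+\nu)$-moment near the origin is controlled by $K_0$ (the $\gamma$-moment) rather than by $M_0$, but since $\gamma<0$ in that regime $K_0$ is among the assumed data, so the argument stands as you intended.
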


\begin{remark}
The bound involving $K_0$ is redundant if $\gamma \geq 0$ because it can be controlled by $M_0$ and $\tilde K_0$. In the case of $\gamma < 0$, it is a requirement on higher integrability on $f$. The value of $\tilde K_0$ is redundant if $\gamma + \nu < 2$ because it can be controlled with $K_0$ and $E_0$. If $\gamma+\nu>2$, the value of $\tilde K_0$ is a bound on higher moments of $f$.
\end{remark}

\begin{remark}
For this result, it is not necessary that $f$ solves the Boltzmann equation \eqref{e:space-homogeneous-boltzmann}. Naturally, when it does, then that helps us control the various quantities involved in the assumptions.

The natural applications of this result are for $f$ being the solution of \eqref{e:space-homogeneous-boltzmann} and either $g=f$ or $g$ equal some derivative of $f$. The right hand side $h$ can be used potentially to handle lower order terms.
\end{remark}

\begin{proof}
We split the operator $Q(f,g)$ into two terms $Q_1(f,g)$ and $Q_2(f,g)$ as described in section \ref{s:twoterms}.
\[ g_t = Q_1(f,g) + Q_2(f,g) + h. \]
Applying Lemmas \ref{l:expression-for-Q1} and \ref{l:expression-for-Q2},
\begin{equation} \label{e:ca1}  g_t - \left( \int_{\R^N} (g(t,v')-g(t,v)) K_f(t,v,v') \dd v' \right) = (\tilde B \ast f) g + h. 
\end{equation}
Recall that from Lemma \ref{l:Btilde}, $\tilde B(v) = C |v|^\gamma$. Therefore
\[ \|\tilde B \ast f\|_{L^\infty([0,T] \times B_R)} \leq C K_0.\]
Thus, we bound the $L^\infty$ norm of the right hand side of \eqref{e:ca1}.
\[ \norm{ (\tilde B \ast f) g + h }_{L^\infty([0,T] \times B_R)} \leq C K_0 \|g\|_{L^\infty([0,T] \times B_R)} + \|h\|_{L^\infty([0,T] \times B_R)}.\]
We apply Lemmas \ref{l:K-above} and \ref{l:K-below} for every fixed value of $t$. From Lemma \ref{l:K-above} we obtain that $K_f$ satisfies \eqref{e:K-bound-above}. The constant $\Lambda$ depends only on $\tilde K_0$ and $B$. From Lemma \ref{l:K-below}, we obtain the existence of the set $A=A(t,v)$ so that $K_f$ satisfies \eqref{e:K-bound-below}. The values of $\lambda$ and $\mu$ depend only on the quantities $M_0$, $M_1$, $E_0$ and $H_0$ above. Applying Theorem \ref{t:Calpha-general}, we finish the proof.
\end{proof}

\begin{cor} \label{c:Calpha-Boltzmann}
Assume $B$ has the form \eqref{e:B-assumption}. Let $R>0$ and $f:[0,T] \times \R^N \to \R$ be a non negative function which solves \eqref{e:space-homogeneous-boltzmann}. Assume that
\begin{align*}
M_1 \leq \int_{\R^N} f(0,v) \dd v &\leq M_0, \\
\int_{\R^N} |v|^2 f(0,v) \dd v &\leq E_0, \\
\int_{\R^N} f(0,v) \log f(0,x,v) \dd v &\leq H_0, \\
\sup_{t \in [0,T]} \int_{\R^N} (1+|v|)^q f(t,v)^p \dd v &\leq K_0, \\
\text{ for some  } p &> N / (N+\nu+\gamma) \text{ and } q = \max(0,1-\frac N \nu (\nu+\gamma)). \qquad \text{\bf (only necessary if $\gamma+\nu \leq 0$)}
\end{align*}
Then, there is an a priori estimate of the form
\[ \|f\|_{L^\infty([T/2,T]\times \R^N)} + \|f\|_{C^\alpha([T/2,T] \times B_{R/2})} \leq C.\]
where $\alpha$ and $C$ depend only on the quantities $M_0$, $M_1$, $E_0$, $H_0$, $K_0$, $R$, $T$, the dimension $N$ and the cross section $B$.
\end{cor}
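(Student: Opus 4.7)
The plan is to chain together the two main a priori estimates already proved in the paper: the $L^\infty$ bound of Theorem \ref{t:Linfty} and the H\"older estimate of Theorem \ref{t:Calpha-linearized} (applied to $g=f$, $h=0$). The only work is to check that both theorems apply under the hypotheses of the corollary.

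First, since $f$ solves \eqref{e:space-homogeneous-boltzmann}, conservation of mass and energy together with non-increase of entropy propagate the four macroscopic bounds $M_1 \leq \int f(t,v)\dd v \leq M_0$, $\int |v|^2 f(t,v)\dd v \leq E_0$ and $\int f(t,v)\log f(t,v)\dd v \leq H_0$ from the initial time to every $t \in [0,T]$. Next, I would apply Theorem \ref{t:Linfty} (in its trivially $x$-independent form). The assumption \eqref{e:extra-integrability} is needed only when $\gamma+\nu \leq 0$ and is exactly the extra hypothesis of the corollary. The assumption \eqref{e:extra-moment} on the $\gamma$-moment is needed only when $\gamma>2$; but in that range $\gamma+\nu > 2 > 0$ and the space-homogeneous Boltzmann equation with hard potentials enjoys instantaneous creation of moments of every order (see Chapter 2, Section 2 of \cite{villani2002review}), giving a bound $\tilde K_0 = \tilde K_0(M_0,E_0,T)$ on any sub-interval bounded away from $t=0$. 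Thus Theorem \ref{t:Linfty} yields $\|f(t,\cdot)\|_{L^\infty} \leq a + b t^{-\beta}$, and in particular a uniform constant $C_1$ on $[T/4,T]$.

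With the uniform $L^\infty$ bound on $[T/4,T]$ in hand, I would verify the remaining hypotheses of Theorem \ref{t:Calpha-linearized} applied to the space-homogeneous equation $f_t = Q(f,f)$ (so $g=f$, $h=0$) on the shifted time interval $[T/4,T]$. When $\gamma<0$, the quantity $K_0 = \sup \int |w|^\gamma f(t,v+w)\dd w$ is controlled by splitting at $|w|=1$: local integrability (since $\gamma>-N$) gives $\int_{|w|\leq 1} |w|^\gamma\dd w \, \|f\|_{L^\infty} \leq C_N C_1$, and $|w|^\gamma \leq 1$ for $|w|\geq 1$ gives $\int_{|w|>1} |w|^\gamma f \dd w \leq M_0$. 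When $\gamma+\nu > 2$, the required bound on $\tilde K_0$ is again supplied by instantaneous moment creation. Applying Theorem \ref{t:Calpha-linearized} on $[T/4,T]$ then yields the $C^\alpha$ estimate on $[T/2,T] \times B_{R/2}$ in terms of $\|f\|_{L^\infty([T/4,T]\times\R^N)} \leq C_1$, which combined with the $L^\infty$ bound is exactly the claim.

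There is no genuine obstacle beyond careful bookkeeping of constants; the main thing to watch is that every constant produced depends only on the quantities listed in the corollary. The two delicate cases are $\gamma+\nu \leq 0$, where the $L^p$ assumption \eqref{e:extra-integrability} is essential to close the argument in Theorem \ref{t:Linfty}, and $\gamma>2$ (equivalently $\gamma+\nu>2$), where one must invoke the classical moment creation result rather than propagate a moment from the initial data. For the physically central range $\gamma+\nu>0$, $\gamma \leq 2$, no conditions beyond mass, energy and entropy enter the estimate, which is the most striking feature of the corollary.
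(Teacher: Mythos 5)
Your proposal is correct and follows essentially the same route as the paper's own proof: first invoke Theorem \ref{t:Linfty} (using instantaneous moment creation for hard potentials to supply the extra moment hypothesis when $\gamma>2$) to get $\|f(t,\cdot)\|_{L^\infty} \leq C(t)$ for $t>0$, then note that for $\gamma<0$ the bound $\int |w|^\gamma f(t,v+w)\dd w \leq C$ follows from $f(t,\cdot) \in L^1 \cap L^\infty$ once $t>0$, and finally restart the equation at a strictly positive time and apply Theorem \ref{t:Calpha-linearized} with $g=f$, $h=0$. The bookkeeping you flag (that moment creation covers both the $\tilde K_0$ in Theorem \ref{t:Linfty} when $\gamma>2$ and the $\tilde K_0$ in Theorem \ref{t:Calpha-linearized} when $\gamma+\nu>2$, both being consequences of $\gamma>0$) is precisely what the paper relies on.
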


\begin{remark}
Note that when $\gamma + \nu \geq 0$, the assumptions are only the boundedness of the macroscopic quantities associated with the initial condition. The last condition is redundant since we always have it with $p=1$ and $q < 1$ and $K_0$ depending on $M_0$ and $E_0$. When $\gamma+\nu < 0$ we have the undesirable extra hypothesis of the boundedness of the value of $K_0$, which does not correspond to any macroscopic quantity.
\end{remark}

\begin{proof}
First, note that if $\gamma>2$, then all moments of $f$ are bounded for $t>0$. This is well known and can be found as Theorem 1 in chapter 2, section 2, of \cite{villani2002review}. Therefore, for any values of $\gamma$ and $\nu$, we can apply Theorem \ref{t:Linfty} to obtain that $\|v(t,\cdot)\|_{L^\infty(\R^N)} \in C(t)$ for all $t>0$.

In the case $\gamma<0$, since $f(t,\cdot) \in L^\infty \cap L^1$, then for all $t > 0$ and $v \in \R^N$,
\[ \int_{\R^N} |w|^\gamma f(t,v+w) \dd w \leq C.\]
This constant $C$ depends only on the parameters in the hypothesis of the Theorem. However, the inequality only holds for strictly positive time.

We restart the equation at a slightly positive time and apply Theorem \ref{t:Calpha-linearized} with $g=f$ to finish the proof.
\end{proof}

\appendix

\section{The Jacobian of the change of variables $(v_\star,\sigma) \to (v',w)$}

The change of variables needed to obtain the expression of $K$ in \eqref{e:Q1} is not so obvious because $\sigma$ and $w$ are variables living on $N-1$ dimensional surfaces. We work out the change of variables in this section.

\begin{lemma}  \label{l:change-of-variables}
For any non negative function $F$ (in terms of $v_*$, $v$, $r=|v-v_\ast|$, $\theta$, $v'$ and/or $v'_\ast$)
\[
 \int_{\R^N} \int_{S^{N-1}} F \dd \sigma \dd v_\star = 2^{N-1} \int_{\R^N} \frac{1}{|v'-v|} \int_{\{w : w \cdot v' = 0\}} F \ \frac 1 {r^{N-2}} \dd w \dd v'.\]
In the right hand side, we must write $v_\ast = v'+w$ and write the values of $r$, $\theta$ and $v'_\ast$ accordingly.
\end{lemma}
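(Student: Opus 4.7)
The plan is to verify this identity by a direct Jacobian computation. A simultaneous translation $v, v_\ast, v', v_\ast' \mapsto \cdot + v_0$ (with $\sigma$ unchanged) leaves both sides of the asserted identity invariant, so I may assume $v = 0$ throughout. Then the relations read $v_\ast = v' + w$, $r\sigma = v' - w$, $r = |v_\ast|$, and the orthogonality $v' \cdot w = 0$ follows from $|v'+w|^2 = |v_\ast|^2 = r^2 = |r\sigma|^2 = |v'-w|^2$; in particular $r^2 = |v'|^2 + |w|^2$. The map $\Phi : (v_\ast, \sigma) \mapsto (v', w) = \bigl((v_\ast+r\sigma)/2,\, (v_\ast-r\sigma)/2\bigr)$ is thus, off a measure-zero set, a diffeomorphism between the two $(2N-1)$-dimensional manifolds $\R^N \times S^{N-1}$ and $M := \{(v', w) \in \R^N \times \R^N : v' \cdot w = 0\}$, with inverse $(v', w) \mapsto (v'+w,\, (v'-w)/|v'-w|)$.

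To set up the Jacobian, I exploit rotational invariance to place the reference point at $v' = \rho e_1$ and $w = w_N e_N$, so that $\sigma^0 = (\rho e_1 - w_N e_N)/r$. On the source side I parametrize $\sigma$ near $\sigma^0$ by $(\sigma_2, \ldots, \sigma_N)$, using $\sigma_1 = (1 - \sigma_2^2 - \cdots - \sigma_N^2)^{1/2}$; the spherical surface measure then takes the standard form $\dd \sigma = \sigma_1^{-1} \dd \sigma_2 \cdots \dd \sigma_N$, which at the reference point equals $(r/\rho)\, \dd \sigma_2 \cdots \dd \sigma_N$. On the target side I parametrize $w \in (v')^\perp$ by $(w_2, \ldots, w_N)$, solving $v' \cdot w = 0$ for $w_1$; at the reference point the induced Lebesgue measure on $(v')^\perp$ is simply $\dd w_2 \cdots \dd w_N$.

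Computing $\partial(v', w_2, \ldots, w_N)/\partial(v_\ast, \sigma_2, \ldots, \sigma_N)$ at the reference point, the matrix decouples: for each transverse index $k \in \{2, \ldots, N-1\}$, the rows $v'^k, w_k$ and columns $v_\ast^k, \sigma_k$ form an independent $2 \times 2$ block $\left(\begin{smallmatrix}1/2 & r/2 \\ 1/2 & -r/2\end{smallmatrix}\right)$ of determinant $-r/2$, while the remaining active rows $v'^1, v'^N, w_N$ and columns $v_\ast^1, v_\ast^N, \sigma_N$ form a $3 \times 3$ block. In the $3 \times 3$ block the dependence on $w_N$ looks dangerous, but adding the $w_N$-row to the $v'^N$-row produces exactly the row $(0, 1, 0)$ thanks to the identity $\rho^2 + w_N^2 = r^2$, after which the remaining $2 \times 2$ minor evaluates (via the same substitution) to $-r/2$. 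Hence $|\det D\Phi| = (r/2)^{N-1}$, and converting coordinate measures to the intrinsic ones yields
\[
\dd v_\ast \, \dd \sigma \;=\; \frac{r}{\rho} \cdot \left(\frac{2}{r}\right)^{N-1} \dd v' \, \dd w \;=\; \frac{2^{N-1}}{|v'-v|\, r^{N-2}} \, \dd v' \, \dd w,
\]
which is the desired formula. The main obstacle is the $3 \times 3$ block determinant; beyond that the computation is mechanical. Conceptually, the extra factor $|v'-v|^{-1}$ on the right does not come from the Jacobian of $\Phi$ itself, but from the spherical volume weight $\sigma_1^{-1} = r/\rho$ in the parametrization of $S^{N-1}$.
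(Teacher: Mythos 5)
Your proof is correct, and it takes a genuinely different route from the paper's. The paper avoids working directly on the two $(2N-1)$-dimensional manifolds: it extends $F$ continuously to all $\sigma\in\R^N$, replaces $\dd\sigma$ by $\varphi_\eps(|\sigma|)\,\dd\sigma$ on $\R^N$, computes the much simpler full $\R^{2N}\to\R^{2N}$ Jacobian of $(v_\star,\sigma)\mapsto(v',w)$ (which is $r^N/2^N$, obtained by a block row reduction), and then sends $\eps\to 0$. In the limit the thickened weight $\varphi_\eps$ concentrates along $\{w\cdot v'=0\}$, and the extra factor $r^2/(2|v'|)$ emerges from the effective width of the support of $\varphi_\eps(|v'-w|/|v'+w|)$ transverse to the hyperplane. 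You instead treat $\Phi$ directly as a diffeomorphism between $\R^N\times S^{N-1}$ and $M=\{v'\cdot w=0\}$, choose adapted local coordinates at a rotated reference point, and compute the $(2N-1)\times(2N-1)$ Jacobian head-on, using the relation $\rho^2+w_N^2=r^2$ to collapse the $3\times 3$ block. I checked your block structure and the $3\times 3$ determinant: the row replacement $R_{v'^N}\mapsto R_{v'^N}+R_{w^N}$ indeed yields $(0,1,0)$, and the remaining $2\times2$ minor evaluates to $-r/4-(\rho^2+w_N^2)/(4r)=-r/2$, so $|\det D\Phi|=(r/2)^{N-1}$; combining with the spherical weight $\sigma_1^{-1}=r/\rho$ reproduces the factor $2^{N-1}/(|v'-v|\,r^{N-2})$. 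Your method requires selecting coordinate patches and arguing by rotational invariance, but in return it avoids the paper's limiting argument (whose ``width $\approx\eps r^2/(2|v'|)$'' step is stated somewhat heuristically) and isolates cleanly where the $|v'-v|^{-1}$ factor comes from: not from $D\Phi$ but from the surface-measure weight on the sphere. Both are valid Jacobian computations; yours is more direct, the paper's sidesteps intrinsic parametrizations.
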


\begin{proof}
We start from the integral
\[ \int_{\R^N} \int_{S^{N-1}} F \dd \sigma \dd v_\star.\]

We will rewrite it as an integral in terms of $\dd w \dd v'$. 

Without loss of generality, we assume that $F$ is continuous. Otherwise, we can always approximate appropriately integrable functions $F$ by continuous functions. Moreover, we extend $F$ (continously) to values of $\sigma \in \R^N$. In that way, we can approximate the integral with
\[ \lim_{\eps \to 0} \int_{\R^N} \int_{\R^N} F \, \varphi_\eps(|\sigma|) \dd \sigma \dd v_\star.\]
where
\[ \varphi_\eps(t) = \begin{cases}
\eps^{-1} & \text{if } 0 \leq t \leq 1+\eps, \\
0 & \text{otherwise.}
\end{cases}
\]

Now we recall the formulas for $v'$ and $w$. In order to have cleaner expressions, we assume $v = 0$. Thus, $r = |v_\star|$ and
\begin{align*}
v' &= \frac 12 (v_\star + |v_\star| \sigma), \\
w &= \frac 12 (v_\star - |v_\star| \sigma).
\end{align*}
Since now both $(v_\star,\sigma)$ and $(v',w)$ are variables in $\R^{N+N}$, we compute the Jacobian of the change of variables
\[ \dd w \dd v' = \frac 1 {2^{2N}} 
\det \begin{pmatrix}
I + \sigma \otimes \frac {v_\star}{|v_\star|} & |v_\star| I \\
I - \sigma \otimes \frac {v_\star}{|v_\star|} & -|v_\star| I
\end{pmatrix} \dd \sigma \dd v_\star
= \frac{r^{N}}{2^{N}}  \dd \sigma \dd v_\star
\]

We have then obtained
\[ \int_{\R^N} \int_{S^{N-1}} F \dd \sigma \dd v_\star = \lim_{\eps \to 0} \int_{\R^N} \int_{\R^N} F \varphi_\eps \left( \frac {|v'-w|}{|v'+w|} \right) \frac{2^{N}}{r^N} \dd w \dd v' \]

Naturally, as $\eps \to 0$, the factor $\varphi_\eps(\dots)$ will concentrate along the plane $w \cdot v' = 0$. The function $\varphi_\eps$ equals $\eps^{-1}$ on its support. 

Let $w = w_p + w_b$, where $w_p \cdot v' = 0$. The function $\varphi_\eps(|w-v'|/|w+v'|)$ will be equal to $\eps^{-1}$ provided that
\[ 0 > 2 \frac{|v'| |w_b|}{r^2} > - \eps - o(\eps) .\]

That is, the width of the support of $\varphi_\eps(\dots)$ around the point $w_p$ is approximately $\eps r^2 / (2|v'|)$. Therefore, by taking $\eps \to 0$, we obtain
\begin{align*}
 \int_{\R^N} \int_{S^{N-1}} F \dd \sigma \dd v_\star &= \lim_{\eps \to 0} \int_{\R^N} \int_{\R^N} F \varphi_\eps \left( \frac {|v'+w|}{|v'-w|} \right) \frac{2^{N}}{r^N} \dd w \dd \sigma, \\
 &= \int_{\R^N} \int_{\{w : w \cdot v' = 0\}} F \ \frac {r^2}{2|v'|} \ \frac{2^{N}}{r^N} \dd w \dd v', \\
 &= 2^{N-1} \int_{\R^N} \frac 1{|v'|} \int_{\{w : w \cdot v' = 0\}} F \ \frac 1 {r^{N-2}} \dd w \dd v'.
\end{align*}
\end{proof}

The following Proposition is an elementary change of variables. We state it without proof. The first part is simply the usual Jacobian of polar coordinates. The second part is less common but equally elementary. 

\begin{prop} \label{p:dual-polar}
Let $g$ be any non negative function. The following two identities hold.
\begin{align*}
\int_0^\infty \int_{\partial B_1} g(r\sigma) \dd \sigma \dd r =  \int_{\R^N} g(z) \frac{\dd z}{|z|^{N-1}},\\
\int_{\partial B_1} \int_{\{w : w \cdot \sigma = 0\}} g(w) \dd S(w) \dd \sigma =  c_N \int_{\R^N} g(z) \frac{\dd z}{|z|}.
\end{align*}
Here $\dd S(w)$ stands for the differential of area of the variable $w$ on the hyperplane $\{w : w \cdot \sigma = 0\}$, and $c_N$ is a constant depending on dimension.
\end{prop}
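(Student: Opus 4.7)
The first identity is just the standard polar-coordinate change of variables. Writing $z=r\sigma$ with $r=|z|$ and $\sigma=z/|z|\in\partial B_1$, the usual Jacobian gives $\dd z = r^{N-1}\dd r\,\dd\sigma$, hence $\dd z/|z|^{N-1}=\dd r\,\dd\sigma$, and integrating $g(z)=g(r\sigma)$ yields the claimed equality. There is nothing more to say here beyond recording the formula.

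For the second identity the plan is to parametrize the hyperplane $\{w\cdot\sigma=0\}$ in polar coordinates and then use a Fubini-type symmetry on the flag manifold $\mathcal F:=\{(\sigma,u)\in\partial B_1\times\partial B_1:\sigma\cdot u=0\}$. For fixed $\sigma$, every $w\in\sigma^\perp$ can be written $w=\rho u$ with $\rho\ge 0$ and $u\in\partial B_1\cap\sigma^\perp\cong S^{N-2}$, and the intrinsic $(N-1)$-dimensional measure on the hyperplane decomposes as $\dd S(w)=\rho^{N-2}\dd\rho\,\dd u$. Substituting this into the left-hand side gives
\[
\int_{\partial B_1}\!\int_0^\infty\!\int_{\partial B_1\cap\sigma^\perp} g(\rho u)\,\rho^{N-2}\,\dd u\,\dd\rho\,\dd\sigma.
\]

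The key step is the symmetry lemma that for any non-negative $F$ on $\mathcal F$,
\[
\int_{\partial B_1}\!\int_{\partial B_1\cap\sigma^\perp}\! F(\sigma,u)\,\dd u\,\dd\sigma=\int_{\partial B_1}\!\int_{\partial B_1\cap u^\perp}\! F(\sigma,u)\,\dd\sigma\,\dd u.
\]
This is just Fubini with respect to the natural $O(N)$-invariant measure on $\mathcal F$: both iterated integrals compute the integral against this measure, since in each case the inner slice is an isometric copy of $S^{N-2}$ inside $\partial B_1$. A clean way to justify it is to observe that $\mathcal F$ carries a unique $O(N)$-invariant probability measure (up to scale), and that the two iterated measures above are both $O(N)$-invariant and assign the same total mass $|\partial B_1|\cdot|S^{N-2}|$.

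Applying this swap to our integrand $F(\sigma,u)=g(\rho u)\rho^{N-2}$ (for each fixed $\rho$), the inner integral $\int_{\partial B_1\cap u^\perp}\dd\sigma$ is independent of $u$ and equals $|S^{N-2}|$. We are left with
\[
|S^{N-2}|\int_0^\infty\!\int_{\partial B_1} g(\rho u)\,\rho^{N-2}\,\dd u\,\dd\rho,
\]
and converting back to Cartesian coordinates via $\dd z=\rho^{N-1}\dd\rho\,\dd u$ gives $|S^{N-2}|\int_{\R^N}g(z)\,\dd z/|z|$, proving the second identity with $c_N=|S^{N-2}|$.

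The only non-mechanical step is the Fubini swap on $\mathcal F$; this is the main (minor) obstacle, but it follows either from the $O(N)$-invariance argument above or by an explicit coordinate calculation parametrizing an orthonormal pair $(\sigma,u)$ as the first two columns of an orthogonal matrix. Everything else is bookkeeping with Jacobians.
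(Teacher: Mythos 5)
Your proof is correct. Note that the paper deliberately gives no proof of this proposition (it is stated ``without proof,'' with the remark that the second identity is ``less common but equally elementary''), so there is no argument of the author's to compare yours against; your write-up simply supplies the missing elementary argument. Your route --- polar coordinates within each hyperplane $\sigma^\perp$, followed by the exchange of the two sphere integrals justified by uniqueness of the $O(N)$-invariant measure on the set of orthonormal pairs $(\sigma,u)$ (the Stiefel manifold $O(N)/O(N-2)$), with equality of total masses $|S^{N-1}|\,|S^{N-2}|$ --- is a clean and complete justification of the only nontrivial step, and it has the added value of identifying the constant, $c_N=|S^{N-2}|$, which the paper leaves unspecified. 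Two small remarks: the swap should be applied under the $\rho$-integral via Tonelli (your integrand is non-negative, so this is immediate and worth one word); and a quick sanity check with $g(z)=e^{-|z|^2/2}$, for which the left side is $|S^{N-1}|(2\pi)^{(N-1)/2}$ and the right side is $c_N|S^{N-1}|2^{(N-3)/2}\Gamma\bigl(\tfrac{N-1}{2}\bigr)$, confirms your value of $c_N$. An alternative, more pedestrian proof would write both sides as integrals over the incidence set $\{(\sigma,z):\sigma\cdot z=0\}$ and compute the fiber measure directly, but your invariance argument is shorter and no less rigorous.
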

 
\bibliographystyle{plain}
\bibliography{boltzmann}

\begin{thebibliography}{10}

\bibitem{alexandre2000entropy}
R~Alexandre, L~Desvillettes, C~Villani, and Bernt Wennberg.
\newblock Entropy dissipation and long-range interactions.
\newblock {\em Archive for rational mechanics and analysis}, 152(4):327--355,
  2000.

\bibitem{alexandre2011boltzmann}
R.~Alexandre, Y.~Morimoto, S.~Ukai, C.-J. Xu, and T.~Yang.
\newblock The {B}oltzmann equation without angular cutoff in the whole space:
  {II}, {G}lobal existence for hard potential.
\newblock {\em Anal. Appl. (Singap.)}, 9(2):113--134, 2011.

\bibitem{MR2795331}
R.~Alexandre, Y.~Morimoto, S.~Ukai, C.-J. Xu, and T.~Yang.
\newblock Global existence and full regularity of the {B}oltzmann equation
  without angular cutoff.
\newblock {\em Comm. Math. Phys.}, 304(2):513--581, 2011.

\bibitem{MR2863853}
R.~Alexandre, Y.~Morimoto, S.~Ukai, C.-J. Xu, and T.~Yang.
\newblock The {B}oltzmann equation without angular cutoff in the whole space:
  {I}, {G}lobal existence for soft potential.
\newblock {\em J. Funct. Anal.}, 262(3):915--1010, 2012.

\bibitem{alexandre2000around}
Radjesvarane Alexandre.
\newblock Around 3d boltzmann non linear operator without angular cutoff, a new
  formulation.
\newblock {\em ESAIM: Mod{\'e}lisation Math{\'e}matique et Analyse
  Num{\'e}rique}, 34(3):575--590, 2000.

\bibitem{MR2556715}
Radjesvarane Alexandre.
\newblock A review of {B}oltzmann equation with singular kernels.
\newblock {\em Kinet. Relat. Models}, 2(4):551--646, 2009.

\bibitem{alexandre2005littlewood}
Radjesvarane Alexandre and Mouhamad El~Safadi.
\newblock Littlewood-{P}aley theory and regularity issues in {B}oltzmann
  homogeneous equations. {I}. {N}on-cutoff case and {M}axwellian molecules.
\newblock {\em Math. Models Methods Appl. Sci.}, 15(6):907--920, 2005.

\bibitem{alexandre2007littlewood}
Radjesvarane Alexandre and Mouhamad Elsafadi.
\newblock Littlewood-{P}aley theory and regularity issues in {B}oltzmann
  homogeneous equations. {II}. {N}on cutoff case and non {M}axwellian
  molecules.
\newblock {\em Discrete Contin. Dyn. Syst.}, 24(1):1--11, 2009.

\bibitem{MR2677982}
Radjesvarane Alexandre, Y.~Morimoto, S.~Ukai, Chao-Jiang Xu, and T.~Yang.
\newblock Global well-posedness theory for the spatially inhomogeneous
  {B}oltzmann equation without angular cutoff.
\newblock {\em C. R. Math. Acad. Sci. Paris}, 348(15-16):867--871, 2010.

\bibitem{MR2543976}
Radjesvarane Alexandre, Yoshinore Morimoto, Seiji Ukai, Chao-Jiang Xu, and Tong
  Yang.
\newblock Regularity of solutions for the {B}oltzmann equation without angular
  cutoff.
\newblock {\em C. R. Math. Acad. Sci. Paris}, 347(13-14):747--752, 2009.

\bibitem{MR2679369}
Radjesvarane Alexandre, Yoshinori Morimoto, Seiji Ukai, Chao-Jiang Xu, and Tong
  Yang.
\newblock Regularizing effect and local existence for the non-cutoff
  {B}oltzmann equation.
\newblock {\em Arch. Ration. Mech. Anal.}, 198(1):39--123, 2010.

\bibitem{MR2959943}
Radjesvarane Alexandre, Yoshinori Morimoto, Seiji Ukai, Chao-Jiang Xu, and Tong
  Yang.
\newblock Smoothing effect of weak solutions for the spatially homogeneous
  {B}oltzmann equation without angular cutoff.
\newblock {\em Kyoto J. Math.}, 52(3):433--463, 2012.

\bibitem{alexandre2002boltzmann}
Radjesvarane Alexandre and C{\'e}dric Villani.
\newblock On the boltzmann equation for long-range interactions.
\newblock {\em Communications on pure and applied mathematics}, 55(1):30--70,
  2002.

\bibitem{MR2851696}
Vlad Bally and Nicolas Fournier.
\newblock Regularization properties of the 2{D} homogeneous {B}oltzmann
  equation without cutoff.
\newblock {\em Probab. Theory Related Fields}, 151(3-4):659--704, 2011.

\bibitem{czubak}
Chi~Hin Chan, Magdalena Czubak, and Luis Silvestre.
\newblock Eventual regularization of the slightly supercritical fractional
  {B}urgers equation.
\newblock {\em Discrete Contin. Dyn. Syst.}, 27(2):847--861, 2010.

\bibitem{lara2014regularity}
H{\'e}ctor Chang~Lara and Gonzalo D{\'a}vila.
\newblock Regularity for solutions of non local parabolic equations.
\newblock {\em Calculus of Variations and Partial Differential Equations},
  49(1-2):139--172, 2014.

\bibitem{chen2011smoothing}
Yemin Chen and Lingbing He.
\newblock Smoothing estimates for boltzmann equation with full-range
  interactions: Spatially homogeneous case.
\newblock {\em Archive for rational mechanics and analysis}, 201(2):501--548,
  2011.

\bibitem{MR2885564}
Yemin Chen and Lingbing He.
\newblock Smoothing estimates for {B}oltzmann equation with full-range
  interactions: {S}patially inhomogeneous case.
\newblock {\em Arch. Ration. Mech. Anal.}, 203(2):343--377, 2012.

\bibitem{constantin2012nonlinear}
Peter Constantin and Vlad Vicol.
\newblock Nonlinear maximum principles for dissipative linear nonlocal
  operators and applications.
\newblock {\em Geometric And Functional Analysis}, 22(5):1289--1321, 2012.

\bibitem{MR1324404}
Laurent Desvillettes.
\newblock About the regularizing properties of the non-cut-off {K}ac equation.
\newblock {\em Comm. Math. Phys.}, 168(2):417--440, 1995.

\bibitem{MR1407542}
Laurent Desvillettes.
\newblock Regularization for the non-cutoff {$2$}{D} radially symmetric
  {B}oltzmann equation with a velocity dependent cross section.
\newblock In {\em Proceedings of the {S}econd {I}nternational {W}orkshop on
  {N}onlinear {K}inetic {T}heories and {M}athematical {A}spects of {H}yperbolic
  {S}ystems ({S}anremo, 1994)}, volume~25, pages 383--394, 1996.

\bibitem{MR1475459}
Laurent Desvillettes.
\newblock Regularization properties of the {$2$}-dimensional non-radially
  symmetric non-cutoff spatially homogeneous {B}oltzmann equation for
  {M}axwellian molecules.
\newblock {\em Transport Theory Statist. Phys.}, 26(3):341--357, 1997.

\bibitem{MR2525118}
Laurent Desvillettes and Cl{\'e}ment Mouhot.
\newblock Stability and uniqueness for the spatially homogeneous {B}oltzmann
  equation with long-range interactions.
\newblock {\em Arch. Ration. Mech. Anal.}, 193(2):227--253, 2009.

\bibitem{desvillettes2005smoothness}
Laurent Desvillettes and Bernt Wennberg.
\newblock Smoothness of the solution of the spatially homogeneous {B}oltzmann
  equation without cutoff.
\newblock {\em Comm. Partial Differential Equations}, 29(1-2):133--155, 2004.

\bibitem{MR1768239}
Nicolas Fournier.
\newblock Existence and regularity study for two-dimensional {K}ac equation
  without cutoff by a probabilistic approach.
\newblock {\em Ann. Appl. Probab.}, 10(2):434--462, 2000.

\bibitem{gamba2009upper}
IM~Gamba, V~Panferov, and C~Villani.
\newblock Upper maxwellian bounds for the spatially homogeneous boltzmann
  equation.
\newblock {\em Archive for rational mechanics and analysis}, 194(1):253--282,
  2009.

\bibitem{MR2629879}
Philip~T. Gressman and Robert~M. Strain.
\newblock Global classical solutions of the {B}oltzmann equation with
  long-range interactions.
\newblock {\em Proc. Natl. Acad. Sci. USA}, 107(13):5744--5749, 2010.

\bibitem{MR2784329}
Philip~T. Gressman and Robert~M. Strain.
\newblock Global classical solutions of the {B}oltzmann equation without
  angular cut-off.
\newblock {\em J. Amer. Math. Soc.}, 24(3):771--847, 2011.

\bibitem{MR2917172}
Lingbing He.
\newblock Well-posedness of spatially homogeneous {B}oltzmann equation with
  full-range interaction.
\newblock {\em Comm. Math. Phys.}, 312(2):447--476, 2012.

\bibitem{MR2425608}
Zhaohui Huo, Yoshinori Morimoto, Seiji Ukai, and Tong Yang.
\newblock Regularity of solutions for spatially homogeneous {B}oltzmann
  equation without angular cutoff.
\newblock {\em Kinet. Relat. Models}, 1(3):453--489, 2008.

\bibitem{jin2014schauder}
Tianling Jin and Jingang Xiong.
\newblock Schauder estimates for solutions of linear parabolic
  integro-differential equations.
\newblock {\em arXiv preprint arXiv:1405.0755}, 2014.

\bibitem{MR1649477}
Pierre-Louis Lions.
\newblock R\'egularit\'e et compacit\'e pour des noyaux de collision de
  {B}oltzmann sans troncature angulaire.
\newblock {\em C. R. Acad. Sci. Paris S\'er. I Math.}, 326(1):37--41, 1998.

\bibitem{MR2679746}
Yoshinori Morimoto and Seiji Ukai.
\newblock Gevrey smoothing effect of solutions for spatially homogeneous
  nonlinear {B}oltzmann equation without angular cutoff.
\newblock {\em J. Pseudo-Differ. Oper. Appl.}, 1(1):139--159, 2010.

\bibitem{morimoto2009regularity}
Yoshinori Morimoto, Seiji Ukai, Chao-Jiang Xu, Tong Yang, et~al.
\newblock Regularity of solutions to the spatially homogeneous boltzmann
  equation without angular cutoff.
\newblock {\em Discrete and Continuous Dynamical Systems-Series A},
  24(1):187--212, 2009.

\bibitem{Mouhot-lowerbounds}
Cl{\'e}ment Mouhot.
\newblock Quantitative lower bounds for the full {B}oltzmann equation. {I}.
  {P}eriodic boundary conditions.
\newblock {\em Comm. Partial Differential Equations}, 30(4-6):881--917, 2005.

\bibitem{schwab}
Russell Schwab and Luis Silvestre.
\newblock Regularity for integro-differential parabolic equations with very
  irregular kernels.
\newblock {\em Preprint}, 2014.

\bibitem{serra2014regularity}
Joaquim Serra.
\newblock Regularity for fully nonlinear nonlocal parabolic equations with
  rough kernels.
\newblock {\em arXiv preprint arXiv:1401.4521}, 2014.

\bibitem{luislecturenotes}
Luis Silvestre.
\newblock Lecture notes on nonlocal equations.
\newblock
  \url{http://www.ma.utexas.edu/mediawiki/index.php/Lecture_notes_on_nonlocal_equations}.

\bibitem{silvestreHJ}
Luis Silvestre.
\newblock On the differentiability of the solution to the {H}amilton-{J}acobi
  equation with critical fractional diffusion.
\newblock {\em Adv. Math.}, 226(2):2020--2039, 2011.

\bibitem{silvestreASNSP}
Luis Silvestre.
\newblock H\"older estimates for advection fractional-diffusion equations.
\newblock {\em Ann. Sc. Norm. Super. Pisa Cl. Sci. (5)}, 11(4):843--855, 2012.

\bibitem{silvestreICM}
Luis Silvestre.
\newblock Regularity estimates for parabolic integro-differential equations and
  applications.
\newblock In {\em Proceedings of ICM 2014}, 2014.

\bibitem{villani2002review}
C{\'e}dric Villani.
\newblock A review of mathematical topics in collisional kinetic theory.
\newblock {\em Handbook of mathematical fluid dynamics}, 1:71--74, 2002.

\end{thebibliography}
\index{Bibliography@\emph{Bibliography}}%

\end{document}